\newcommand{\FA}{\mathbf{A}}   
\newcommand{\FB}{\mathbf{B}}   
\newcommand{\FH}{\mathbf{H}}   
\newcommand{\Fu}{\mathbf{u}}	   
\newcommand{\Fx}{\mathbf{x}}	   
\newcommand{\Fv}{\mathbf{v}}	   
\newcommand{\FI}{\mathbf{I}}	   
\newcommand{\FW}{\mathbf{W}}	   
\newcommand{\FV}{\mathbf{V}}
\newcommand{\FU}{\mathbf{U}}	   
\newcommand{\FN}{\mathbf{N}}	   
\newcommand{\Ft}{\mathbf{t}}	   
\newcommand{\FT}{\mathbf{T}}	   
\newcommand{\Fs}{\mathbf{s}}	   
\newcommand{\Ff}{\mathbf{f}}	   
\newcommand{\SA}{\mathbf{A^{[N]}}}
\newcommand{\SB}{\mathbf{B^{[N]}}}
\newcommand{\SC}{\mathbf{C^{[N]}}}
\newcommand{\SxZ}{\mathbf{x^{[N]}_0}}
\newcommand{\Sx}{\mathbf{x^{[N]}}}
\newcommand{\Sxt}{\mathbf{x^{[N]}_t}}
\newcommand{\SxT}{\mathbf{x^{[N]}_T}}
\newcommand{\DSxt}{\mathbf{\dot{x}^{[N]}_t}}
\newcommand{\Su}{\mathbf{u^{[N]}}}
\newcommand{\Sut}{\mathbf{u^{[N]}_t}}
\newcommand{\Sutau}{\mathbf{u^{[N]}_\tau}}
\newcommand{\BA}{\mathbb{A}}
\newcommand{\BB}{\mathbb{B}}
\newcommand{\BC}{\mathbb{C}}
\newcommand{\BS}{\mathbb{S}}
\newcommand{\BR}{\mathds{R}}
\newcommand{\BN}{\mathds{N}}
\newcommand{\BT}{\mathbb{T}}
\newcommand{\BW}{\mathbb{W}}   
\newcommand{\BSA}{\mathbb{A^{[N]}}}
\newcommand{\BSB}{\mathbb{B^{[N]}}}
\newcommand{\BSC}{\mathbb{C^{[N]}}}
\newcommand{\BP}{\mathbb{P}}
\newcommand{\BSP}{\mathbb{P^{[N]}}}   
\newcommand{\BSPt}[1]{\mathbb{P}^{\mathbb{[N]}}_{#1}}
\newcommand{\BPZ}{\mathbb{P}_0}
\newcommand{\BSPZ}{\mathbb{P}^{[\mathbb{N}]}_{0}}
\newcommand{\BAP}{\widetilde{\mathbb{P}}^{\mathbb{N}}} 
\newcommand{\BAPt}[1]{{\widetilde{\mathbb{P}}}^{\mathbb{N}}_{#1}}
\newcommand{\BI}{\mathbb{I}}     
\newcommand{\GS}{\tilde{\mathcal{W}}}
\newcommand{\GSZ}{\tilde{ \mathcal{W}}_0}
\newcommand{\GSO}{\tilde{\mathcal{W}}_1}
\newcommand{\ES}{\mathcal{W}} 
\newcommand{\ESZ}{\mathcal{W}_0}
\newcommand{\ESO}{\mathcal{W}_1}
\newcommand*\TRANS{{\mathpalette\doTRANS\empty}}
\newcommand*\doTRANS[2]{\raisebox{\depth}{$\m@th#1\intercal$}}
\newtheorem{assumption}{Assumption}
\begin{document}

%
\title{Graphon Control of Large-scale Networks of Linear Systems}

%
%
%

\author{Shuang~Gao,~\IEEEmembership{Member,~IEEE,}
        and~Peter E.~Caines,~\IEEEmembership{Life~Fellow,~IEEE}
\thanks{*Supported in part by NSERC (Canada), and the U.S. ARL and ARO grant W911NF1910110.}
\thanks{The authors are with the Department of Electrical and Computer Engineering, McGill University,
  Montreal, QC, Canada. (email: {\tt\small    $\{$sgao,peterc$\}$@cim.mcgill.ca})}
%
}

\maketitle


\begin{abstract}
To achieve control objectives for extremely large-scale complex networks using standard methods is essentially intractable.
In this work a theory of the approximate control of complex network systems is proposed and developed by the use of graphon theory and the theory of infinite dimensional systems. 
First, graphon dynamical system models are formulated in an appropriate infinite dimensional space in order to represent arbitrary-size networks of linear dynamical systems, and to define the convergence of sequences of network systems with limits in the space. Exact controllability and approximate controllability of graphon dynamical systems are  then investigated.
Second, the minimum energy state-to-state control problem and the linear quadratic regulator problem for systems on complex networks are considered. The control problem for graphon limit systems is solved in each case and approximations are defined which yield control laws for the original control problems. Furthermore, convergence properties of the approximation schemes are established. 
A systematic control design methodology is developed within this framework. 
Finally, numerical examples of networks with randomly sampled weightings are presented to illustrate the effectiveness of the graphon control methodology.
\end{abstract}

\begin{IEEEkeywords}
	Graphon control, large networks, complex networks, graphons, infinite dimensional systems
\end{IEEEkeywords}
%

\IEEEpeerreviewmaketitle

\section{Introduction}
Complex network systems  such as the Internet of Things (IoT), electric, neuronal, food web, epidemic, stock market and social networks,
are ubiquitous,  and they have been the focus of much research over the past 20 years. In particular, researchers have been studying networks of interacting dynamical systems  to learn which collective behaviours may  emerge from system interactions over complex networks (\cite{olfati2006flocking,leonard2007collective,ogren2004cooperative}). 
Furthermore, in addition to the structural properties of networks,  system theoretic notions such as controllability, observability, consensus dynamics and  synchronization have been widely applied to systems on networks ({\cite{NodalDynamics2012,liu2011controllability, arenas2008synchronization, wang2002pinning,yan2015spectrum,pasqualetti2014controllability,YouXie2011network,wang2002synchronization}}). 
However, to
achieve general control objectives for  extremely large scale networks with complex interconnections (henceforth, complex networks) using these standard methods is essentially an intractable task. 

 Graphon theory, introduced and developed in recent years by
 L. Lov{\'a}sz, B. Szegedy, C. Borgs, J. T. Chayes, V. T. S{\'o}s, and K. Vesztergombi among others (see \cite{lovasz2006limits, borgs2008convergent, borgs2012convergent, lovasz2012large}), provides a theoretical tool to characterize complex graphs and graph limits. This work draws on graph theory, measure theory, probability, and functional analysis, and has been applied in different areas such as games \cite{parise2018graphon,PeterMinyiCDC18GMFG}, signal processing \cite{morency2017signal},  network centrality \cite{avella2018centrality}, and the heat equation \cite{medvedev2014nonlinear}. 

We propose a graphon based control methodology for controlling complex network systems. 
The general graphon control strategy consists of the following steps:
\begin{enumerate}[1)]
	\item  Identify the graphon limit of the sequence $\tilde{S}$ of networks as the number of nodes goes to infinity. 
 \item  Solve the corresponding control problem for the limit graphon dynamical system.  
 \item Approximate the control law for the limit system  so as to generate control laws for the application to any given finite system along the sequence $\tilde{S}$ of  finite network systems. 
\end{enumerate}
Specifically, in this paper, the minimum energy state-to-state control problem and the linear quadratic regulator problem are solved for complex network systems using this graphon control strategy.  

The main contributions of this paper include: 
\begin{enumerate}[1)]
\item the formulation of graphon differential equations and graphon dynamical control systems, which allows us to represent linear control systems on arbitrary size networks and compare systems of different sizes. This further permits us to design the graphon control methodology based on the network limit.
\item the  graphon state-to-state control methodology to solve state-to-state control problem on complex networks.
\item the proposed graphon linear quadratic regulation methodology to solve linear quadratic regulator problems on large-scale networks.
\end{enumerate}

This paper contains the complete proofs omitted in the previous articles \cite{ShuangPeterCDC17, ShuangPeterMTNS18a,ShuangPeterCDC18} and the extension of previous results, as well as new numerical examples. 

The paper is organized as follows: In Section II, the fundamentals of graphon theory are presented, followed by the development of the graphon unitary operator algebra and graphon differential equations. Section III introduces the network system model and its equivalent representation by the graphon dynamical system. Section IV presents the properties of graphon dynamical systems, including existence and uniqueness of the solution and controllability. In Section V and Section VI, the graphon control strategies for the state-to-state control problem and the linear quadratic regulator problem are presented respectively. For each problem, the approximation method is developed and the corresponding convergence properties are established.   Section VII contains numerical examples to illustrate the graphon control methodology.  

\emph{Notation:} Bold face letters (e.g. $\FA$, $\FB$, $\Fu$) are used to represent graphons and functions.  Blackboard bold letters (e.g. $\BA$, $\BB$) are used to denote linear operators which are not necessarily compact. Let $\BI$ denote the identity operator.  Let $\langle\cdot ,  \cdot \rangle$ denote inner product for $L^2[0,1]$ and $\|\cdot \|$ represent norm.  $\mathds{1}_S(\cdot)$ denotes the indicator function for a set $S$, that is, $\mathds{1}_S(x)=1$ if $x\in S$, and $0$ otherwise. $\mathbf{1}_{S}$ denotes the $L^2[0,1]$ function with $1$ in $S\subset[0,1]$ and $0$ in $[0,1]\backslash S$. 
The set of all real numbers and that of all natural numbers (excluding $0$) are respectively denoted by $\BR$ and $\BN$. 
\section{Preliminaries}
 \subsection{Graphs, Adjacency Matrices and Pixel Pictures}
The underlying structure of a network can be described by a graph $G=(V, E)$ specified by a node set $V$  and an edge set $E$ which represents the connections between nodes.  An equivalent representation of a graph $G=(V, E)$ by a matrix called an \emph{adjacency matrix} is defined to be the square $|V|\times |V|$ matrix $A$ such that an element $A_{ij}$ is one when there is an edge from node $i$ to node $j$, and zero otherwise. If the graph is a weighted graph where edges are associated with weights, then the adjacency matrix has corresponding weighted elements.

Another representation of the adjacency matrix is given by a pixel diagram where the 0s are replaced by white squares and the 1s  by black squares.
The whole pixel diagram is presented in a unit square, so the square elements have sides of length $\frac1n$, where $n$ is the number of nodes.

\begin{figure}[h]
\centering
	\includegraphics[height=1.8cm]{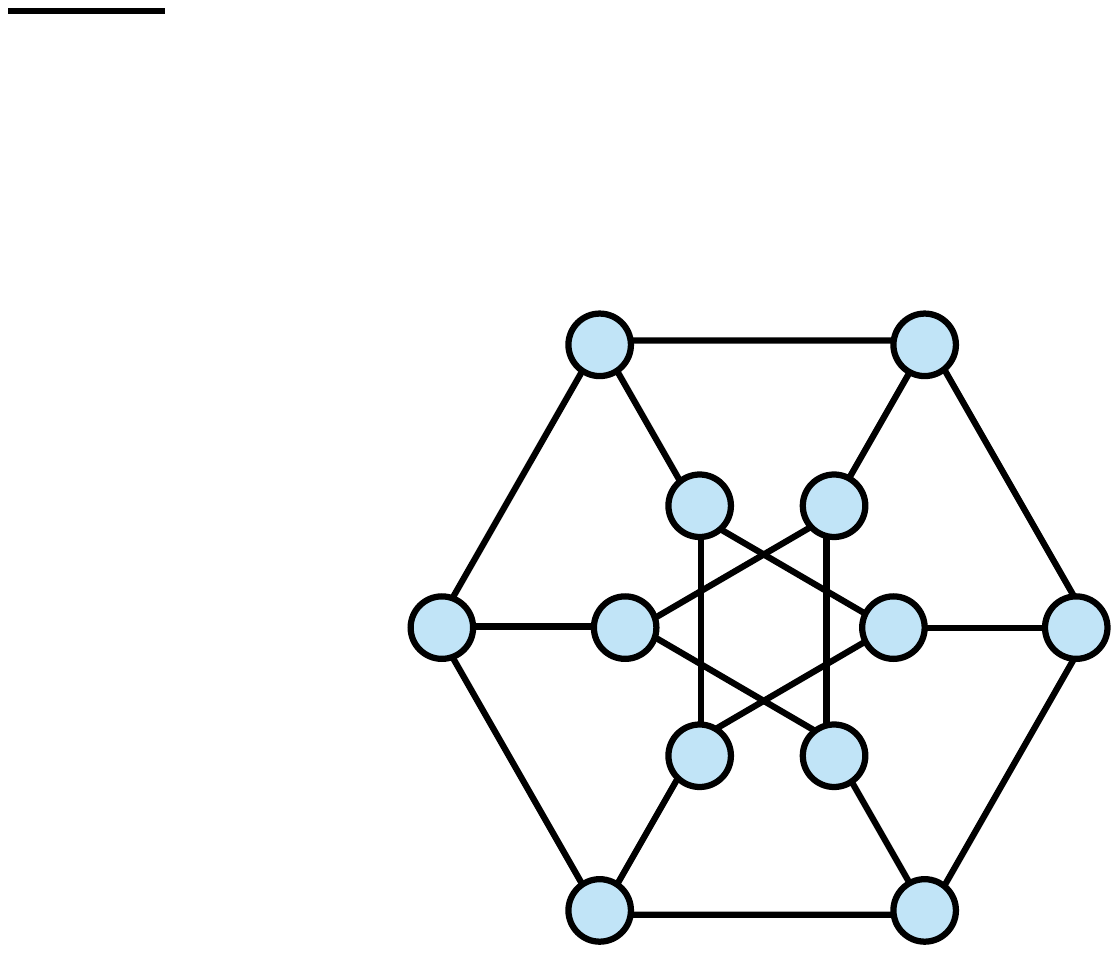} \quad
	\includegraphics[height=1.8cm]{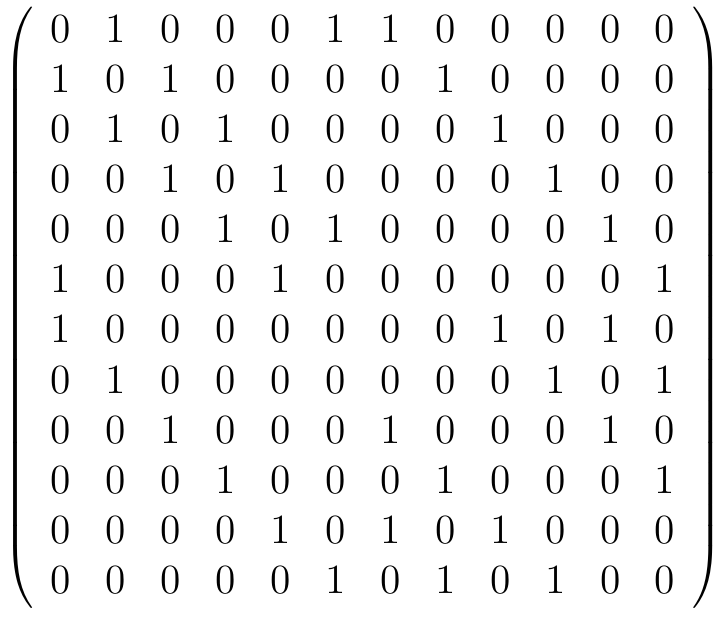} \quad
	\includegraphics[height=1.8cm]{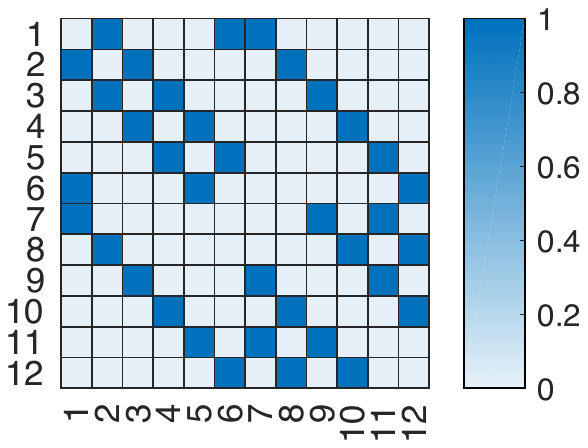}
	\caption{{D\"urer Graph, adjacency matrix, pixel diagram }}
	\label{fig: graph-adjacencymatrix-pixelpicture}
\end{figure}
\subsection{Graphons}

A meaningful convergence with respect to the \emph{cut metric} is defined for sequences of dense and finite graphs \cite{lovasz2012large}.  Graphons are then the limit objects of converging graph sequences. This concept is illustrated by a sequence of half graphs \cite{lovasz2012large} represented by
a sequence of pixel diagrams on the unit square converging to its limit in Fig. \ref{fig: converge-in-pixel-pictures}. Readers are referred to \cite{lovasz2012large} for more examples of convergent graph sequences such as uniform attachment graphs, complete bipartite graphs, and Erd\"os-R\'enyi graphs.
Exchangeable random graphs can also be modeled by graphons \cite{orbanz2014bayesian}.

 \begin{figure}[h]
 	\centering
	\includegraphics[height=1.8cm]{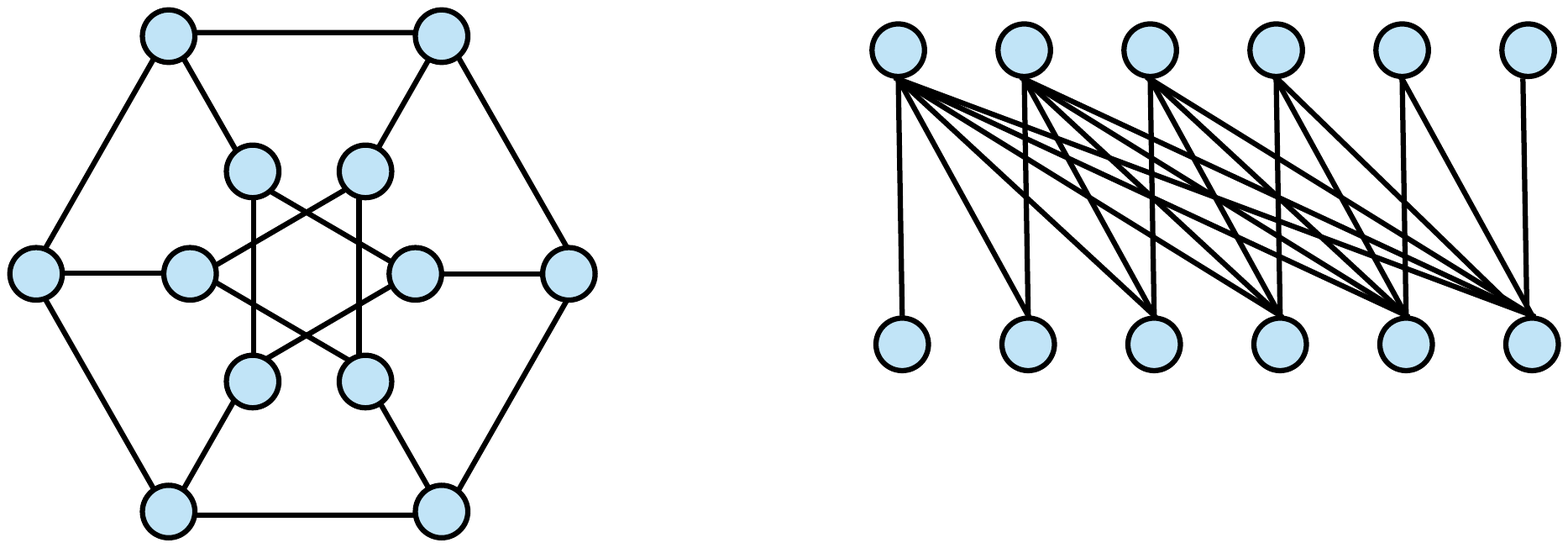}\qquad
	\includegraphics[height=1.8cm]{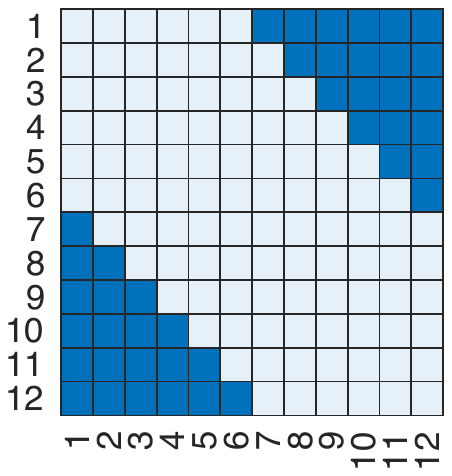}\qquad
	\includegraphics[height =1.8cm]{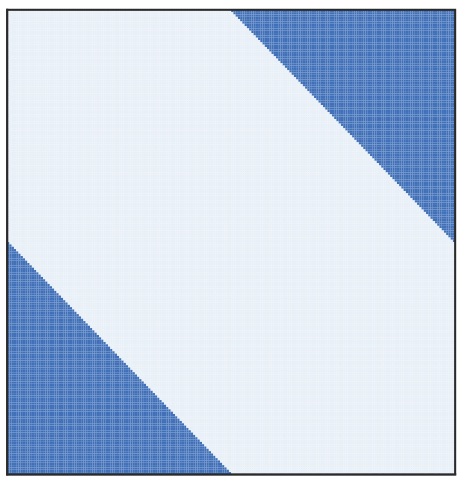}
	\caption{{Graph sequence converging to its limit}}
	\label{fig: converge-in-pixel-pictures}
\end{figure}

The set of finite graphs endowed with the cut metric gives rise to a metric space, and the completion of this space is the \emph{space of graphons}. Graphons are represented by bounded {symmetric} Lebesgue measurable functions $\FW: [0,1]^2 \rightarrow [0,1]$, which can be interpreted as weighted graphs on the node set $[0,1]$. 
%
We note that in some papers, for instance \cite{borgs2014lp}, the word "graphon" refers to symmetric, integrable functions from $[0,1]^2$ to $\BR$.   In this paper, unless stated otherwise, the term "graphon" is used to refer to  functions  $\mathbf{W_1}: [0,1]^2\rightarrow [-1,1]$ and  $\ESO$ denotes the space of graphons. Let $\ESZ$ represent the space of all graphons satisfying $\mathbf{W_0}: [0,1]^2\rightarrow [0,1]$ and let $\ES$ denote the space of all symmetric measurable functions  $\mathbf{W}: [0,1]^2\rightarrow \BR$. 

The cut norm of a graphon $\FW \in \ESO$ is then defined as
 \begin{equation}
 \| \mathbf{W} \|_{\Box}=\sup_{M,T\subset [0,1]}|\int_{M\times T}\mathbf{W}(x,y)dxdy|
 \end{equation}
 with the supremum taking over all measurable subsets $M$ and $T$ of $[0,1]$.
Evidently, the following inequalities hold between norms on a graphon $\FW$:
\begin{equation} \label{equ: norm-inequalities}
	\|\mathbf{W}\|_{\Box}\leq \|\mathbf{W}\|_1\leq \|\mathbf{W}\|_2\leq \|\mathbf{W}\|_{\infty}\leq1,
\end{equation}
where the second to the forth norms are given by the corresponding $L^p$ norms on $\ESO$. 
 Denote the set of measure preserving bijections from $[0, 1] $ to $ [0, 1]$ by $S_{[0,1]}$.  The \emph{cut metric} between two graphons $\mathbf{V}$ and $\mathbf{W}$ is then given by
\begin{equation}
	\delta_{\Box}(\mathbf{W, V})=\inf_{\phi\in S_{[0,1]}}\|\mathbf{W}^{\phi} -\mathbf{V}\|_{\Box},
\end{equation}
where $\mathbf{W}^{\phi}(x,y)=\mathbf{W}(\phi(x),\phi(y))$.
We see that the cut metric $\delta_{\Box}(\cdot, \cdot)$ is given by measuring the maximum discrepancy between the integrals of two graphons over measurable subsets of $[0,1]$, then minimizing the maximum discrepancy over all possible measure preserving bijections. 
Strictly speaking the cut metric is not a metric since the distance between two distinct graphons under the cut metric can be zero (see e.g. \cite{borgs2008convergent}).  
 However, by identifying functions $\mathbf{V}$ and $\mathbf{W}$ for which $\delta_{\Box}(\mathbf{V, W})=0$, we can construct the metric space $\GSO$  which denotes  the image of $\ESO$  under this identification. Similarly we can construct $\GSZ$ from $\ESZ$ and $\GS$  from $\ES$. See \cite{lovasz2012large}.

The {\it{$L^2$ metric}} for  any graphons $\mathbf{W}$ and $\mathbf{V}$ is defined as
\begin{equation}
	\begin{aligned}
		d_{L^2}(\FW, \FV) & = \|\FW-\FV\|_2 \\
		&  =\left(\int_{[0,1]^2} |\mathbf{W}(x,y)- \mathbf{V}(x,y)|^2 dxdy\right)^{\frac{1}{2}}
	\end{aligned}
\end{equation}
and the {\it{$\delta_2$ metric}} as 	
\begin{equation}
	\begin{aligned}
	&\delta_2 (\mathbf{W, V})= \inf_{\phi\in S_{[0,1]} }d_{L^2}(\mathbf{W^{\phi}, V})=\inf_{\phi  \in S_{[0,1]}}\|\mathbf{W}^{\phi}- \mathbf{V}\|_2;
	\end{aligned}
\end{equation}
 similarly, the {\it{$L^1$ metric}} and the \emph{$\delta_1$ metric} are defined respectively as
\begin{align}
	&d_{L^1}(\mathbf{W, V})=\|\mathbf{W}- \mathbf{V}\|_1,\\
  &\delta_1 (\mathbf{W, V}) 
  =\inf_{\phi  \in S_{[0,1]}}\|\mathbf{W}^{\phi}- \mathbf{V}\|_1.
\end{align} 

For any two graphons $\mathbf{W}$ and $\mathbf{V}$ the following inequalities hold immediately: 
\begin{equation} \label{equ:distance-inequality}
 \delta_{\Box}(\mathbf{W, V}) \leq \delta_1(\mathbf{W, V}) \leq \delta_2(\mathbf{W, V})\leq d_{L^2}(\mathbf{W, V}).
\end{equation}
The $\delta_2$ (or $\delta_1$) metric and $\delta_{\Box}$ metric share the same equivalence classes under the measure preserving transformations \cite[Corollary 8.14]{lovasz2012large}. Clearly, 
the $\delta_2$ (or $\delta_1$) metric is also well defined on $\GSO$. 

\subsection{Compactness of the Graphon Space}\label{subsec: compactness of graphon space}
\begin{theorem}[\cite{lovasz2012large}] \label{thm: compactness in G0 graphon space}
The space $( \GSZ,\delta_{\Box})$ is compact.
\end{theorem}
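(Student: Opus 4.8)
The plan is to prove sequential compactness directly, which for a metric space is equivalent to compactness: I would show that every sequence $(\FW_n)_{n\geq 1}$ in $\GSZ$ admits a subsequence converging in the $\delta_{\Box}$ metric to a limit in $\GSZ$. The engine of the argument is the weak (Frieze--Kannan) regularity lemma for graphons, which I would invoke in the following form: for every integer $k\geq 1$ there is a bound $q(k)$, depending only on $k$, such that every graphon $\FW\in\ESZ$ admits a partition $\mathcal P$ of $[0,1]$ into at most $q(k)$ measurable parts whose associated step function $\FW_{\mathcal P}$ (the conditional expectation of $\FW$ on the product partition) satisfies $\|\FW-\FW_{\mathcal P}\|_{\Box}\leq 1/k$. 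Crucially, the number of parts is uniform over all graphons, not just over a single one.

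First, I would fix the sequence and, for each $k$, apply the regularity lemma to every $\FW_n$ to obtain a step-function approximant $\FW_n^{(k)}$ with at most $q(k)$ parts and cut-norm error at most $1/k$. After composing each $\FW_n$ with a measure preserving bijection — which does not change its $\delta_{\Box}$-class — I would arrange that the partitions consist of intervals and that, for fixed $n$, the partitions are nested as $k$ increases, so that $(\FW_n^{(k)})_k$ forms a martingale. A step function with $m$ interval parts is then encoded by finitely many numbers: the $m$ interval lengths (a point in a compact simplex) together with the $m\times m$ array of values in $[0,1]$ (a compact cube). Since all of this finite-dimensional data ranges over compact sets, for each fixed $k$ a subsequence makes the encodings of $\FW_n^{(k)}$ converge.

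Next, I would run a diagonal argument over $k$ to obtain a single subsequence, still denoted $(\FW_n)$, along which the encoded data of $\FW_n^{(k)}$ converge for every $k$ simultaneously; the limiting data define a limiting step function $\FU_k$ at each scale $k$. Using the martingale structure together with the regularity bounds, $(\FU_k)_k$ is $\delta_{\Box}$-Cauchy, and by completeness of the graphon space it converges to some $\FW\in\GSZ$. Finally the triangle inequality
\[
\delta_{\Box}(\FW_n,\FW)\leq \delta_{\Box}(\FW_n,\FW_n^{(k)})+\delta_{\Box}(\FW_n^{(k)},\FU_k)+\delta_{\Box}(\FU_k,\FW)
\]
closes the argument: the first term is at most $1/k$, the third is small for large $k$, and the middle term tends to $0$ as $n\to\infty$ for each fixed $k$ because the encodings converge; choosing $k$ large and then $n$ large shows $\FW_n\to\FW$.

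The step I expect to be the main obstacle is the bookkeeping around the measure preserving rearrangements. Because $\delta_{\Box}$ involves an infimum over all such bijections, the partitions produced by the regularity lemma for different $n$ need not be comparable, so one must simultaneously (i) rearrange each graphon so that its partition consists of intervals, (ii) nest the partitions across scales $k$ to obtain the martingale needed for a consistent limit, and (iii) ensure these rearrangements remain compatible with extracting a convergent subsequence of the finite-dimensional encodings. Making these alignments precise — rather than the regularity lemma itself, which I would cite — is the technical heart of the proof.
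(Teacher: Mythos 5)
The paper itself gives no proof of this theorem --- it is quoted directly from \cite{lovasz2012large} --- and your proposal reconstructs precisely the argument used there (weak regularity with a bound $q(k)$ uniform over all graphons, rearrangement to interval partitions, nesting across scales, compactness of the finite-dimensional encodings plus a diagonal subsequence, and the final three-term triangle inequality). So in structure your proof is the standard one, and almost all of it is sound, including the point you correctly identify as the technical heart: the simultaneous bookkeeping of measure preserving rearrangements, interval partitions, and nestedness.

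There is, however, one genuine flaw: the step where you say that $(\FU_k)_k$ is $\delta_{\Box}$-Cauchy ``and by completeness of the graphon space it converges to some $\FW \in \GSZ$.'' This is circular. Completeness of $(\GSZ, \delta_{\Box})$ is not an independently available fact in this development --- it is obtained as a corollary of the very compactness you are proving (a compact metric space is complete), and no more elementary proof of completeness of the cut-metric quotient is on hand; producing a limit object for a $\delta_{\Box}$-Cauchy sequence is essentially the same difficulty as the theorem itself. The correct move, and the one the cited proof makes, exploits the martingale structure you have already set up: since each $\FW_n^{(k)}$ is the conditional expectation of $\FW_n$ onto the $k$-th (nested) partition, the limits $\FU_k$ form a $[0,1]$-valued martingale with respect to the limiting filtration, so the martingale convergence theorem yields a pointwise a.e.\ limit $\FU \in \ESZ$; dominated convergence then gives $\|\FU_k - \FU\|_1 \to 0$, and since $\|\cdot\|_{\Box} \leq \|\cdot\|_1$ this controls the third term $\delta_{\Box}(\FU_k, \FU)$ in your triangle inequality. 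With that substitution --- martingale convergence in place of the appeal to completeness --- your proof closes correctly and coincides with the proof in \cite{lovasz2012large}.
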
 

This remains valid if $\GSZ$ is replaced by any uniformly bounded subset of $\GS$  closed  in the cut metric \cite{lovasz2012large}.
\begin{theorem}[\cite{lovasz2012large}] \label{thm: compactness in G1 extended graphon space}
The space $(\GSO,\delta_{\Box})$ is compact.
\end{theorem}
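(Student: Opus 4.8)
The plan is to deduce this from Theorem~\ref{thm: compactness in G0 graphon space}, which already gives compactness of $(\GSZ,\delta_{\Box})$, by exhibiting an explicit affine correspondence between the $[-1,1]$-valued and the $[0,1]$-valued graphon spaces. Define $\Phi:\ESO\to\ESZ$ by $\Phi(\FW)=\tfrac12(\FW+\mathbf{1})$, where $\mathbf{1}$ denotes the constant graphon taking value $1$ on $[0,1]^2$. Since $\FW$ ranges over symmetric measurable functions into $[-1,1]$, the image $\Phi(\FW)$ is a symmetric measurable function into $[0,1]$, so $\Phi$ is well defined; it is a bijection with inverse $\FU\mapsto 2\FU-\mathbf{1}$.

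The first step is to record how the cut norm transforms under $\Phi$. For any $\FW,\FV\in\ESO$ the constant term cancels in the difference, so $\Phi(\FW)-\Phi(\FV)=\tfrac12(\FW-\FV)$ pointwise; integrating over any measurable rectangle $M\times T$ and taking the supremum yields
\begin{equation}
\|\Phi(\FW)-\Phi(\FV)\|_{\Box}=\tfrac12\,\|\FW-\FV\|_{\Box}.
\end{equation}
The second step is to lift this to the cut metric. The key observation is that the affine map commutes with relabelings: since $\mathbf{1}$ is invariant under every measure preserving bijection $\phi\in S_{[0,1]}$, one has $(\Phi\FW)^{\phi}=\Phi(\FW^{\phi})$. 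Substituting into the definition of $\delta_{\Box}$ and using the cut-norm identity gives
\begin{equation}
\delta_{\Box}(\Phi\FW,\Phi\FV)=\inf_{\phi\in S_{[0,1]}}\tfrac12\|\FW^{\phi}-\FV\|_{\Box}=\tfrac12\,\delta_{\Box}(\FW,\FV).
\end{equation}

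In particular $\delta_{\Box}(\Phi\FW,\Phi\FV)=0$ if and only if $\delta_{\Box}(\FW,\FV)=0$, so $\Phi$ respects the equivalence classes defining $\GSO$ and $\GSZ$ and descends to a bijection $\bar\Phi:\GSO\to\GSZ$. By the displayed identity $\bar\Phi$ scales the metric by the constant factor $\tfrac12$, hence is a similarity and in particular a homeomorphism. As compactness is preserved under homeomorphism, the compactness of $(\GSZ,\delta_{\Box})$ from Theorem~\ref{thm: compactness in G0 graphon space} transfers to $(\GSO,\delta_{\Box})$.

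The two scaling identities are routine; the only point requiring genuine care is the commutation $(\Phi\FW)^{\phi}=\Phi(\FW^{\phi})$ together with the verification that $\Phi$ carries equivalence classes bijectively, since this is precisely what makes $\bar\Phi$ well defined on the quotient and guarantees it is a homeomorphism rather than merely a continuous bijection. An alternative route, avoiding the quotient bookkeeping, is to invoke the remark following Theorem~\ref{thm: compactness in G0 graphon space}: $\GSO$ is a uniformly bounded subset of $\GS$, so it would suffice to show it is closed in the cut metric. I expect the main obstacle on that route to be controlling the pointwise range of a cut-metric limit of $[-1,1]$-valued graphons, which is exactly the subtlety the affine correspondence sidesteps.
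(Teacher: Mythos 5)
Your proof is correct. Each step checks out: $\Phi(\FW)=\tfrac12(\FW+\mathbf{1})$ is a well-defined bijection $\ESO\to\ESZ$ (the inverse $\FU\mapsto 2\FU-\mathbf{1}$ indeed lands in $\ESO$), the cancellation of the constant term gives $\|\Phi(\FW)-\Phi(\FV)\|_{\Box}=\tfrac12\|\FW-\FV\|_{\Box}$, the invariance of $\mathbf{1}$ under relabeling gives $(\Phi\FW)^{\phi}=\Phi(\FW^{\phi})$, and hence $\delta_{\Box}(\Phi\FW,\Phi\FV)=\tfrac12\,\delta_{\Box}(\FW,\FV)$, so $\Phi$ descends to a similarity $\bar\Phi:\GSO\to\GSZ$ of the quotient metric spaces; compactness then transfers from Theorem~\ref{thm: compactness in G0 graphon space}. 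However, the paper does not prove this statement at all: it is quoted directly from \cite{lovasz2012large}, and the only hint of an argument is the remark following Theorem~\ref{thm: compactness in G0 graphon space}, namely that compactness persists for any uniformly bounded subset of $\GS$ closed in the cut metric --- the route you correctly flag as requiring one to verify closedness of $\GSO$, i.e., to control the range of a cut-metric limit of $[-1,1]$-valued kernels. Your affine-rescaling reduction sidesteps that closedness issue entirely and is self-contained modulo Theorem~\ref{thm: compactness in G0 graphon space}, which is what it buys; what the paper's citation route buys is greater generality (the closed-bounded-subset statement, and the $L^p$-ball version of \cite{borgs2014lp} quoted next in the paper, cover sets that are not affine images of $\GSZ$), at the cost of importing the nontrivial closedness argument from \cite{lovasz2012large}. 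Your identification of the quotient bookkeeping --- that $\delta_{\Box}(\Phi\FW,\Phi\FV)=0$ iff $\delta_{\Box}(\FW,\FV)=0$, so $\bar\Phi$ is well defined and bijective on equivalence classes --- is exactly the point that needed care, and you handled it correctly.
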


 	Sets in $\GSO$ (or $\GSZ$)  compact with respect to the $\delta_2$ metric are compact with respect to the cut metric.
It follows immediately from (\ref{equ:distance-inequality}) and Theorem \ref{thm: compactness in G1 extended graphon space} (or Theorem \ref{thm: compactness in G0 graphon space}), if a graphon sequence is Cauchy in the $\delta_2$ metric then it is also a Cauchy sequence in the cut metric and under both metrics, the limits 
are identical in $\GSO$ (or $\GSZ$).

Define the $L^p$ closed ball in $\GS$ with radius $C>0$ as $\mathcal{B}_{L^p} (C ) := \{ \FW : \| \FW \|_p \leq C, \FW \in \GS \}$. 
\begin{theorem}[\cite{borgs2014lp}]
	 The space $(\mathcal{B}_{L^p} (C ),\delta_{\Box} )$ with $1<p\leq \infty $ is compact. %
\end{theorem}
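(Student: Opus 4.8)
The plan is to prove sequential compactness of $(\mathcal{B}_{L^p}(C),\delta_{\Box})$, which for a metric space is equivalent to compactness. First I would dispose of the case $p=\infty$: the ball $\mathcal{B}_{L^{\infty}}(C)$ is exactly the image of $\GSO$ under multiplication by $C$, and since $\delta_{\Box}(C\FW,C\FV)=C\,\delta_{\Box}(\FW,\FV)$ this scaling is a similarity of the cut metric, so compactness follows at once from Theorem \ref{thm: compactness in G1 extended graphon space}. Thus assume $1<p<\infty$. The obstruction in this range is that elements of $\mathcal{B}_{L^p}(C)$ need not be uniformly bounded, so the compactness results for bounded graphon spaces do not apply directly. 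The idea is to approximate each $\FW$ by a truncation that \emph{is} uniformly bounded, to which those results apply, and then to control the truncation error uniformly over the ball; it is precisely here that $p>1$ is used.

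For $K\in\BN$ define the truncation $\FW^{(K)}:=\operatorname{sign}(\FW)\cdot\min\{|\FW|,K\}$, a symmetric measurable function with $\|\FW^{(K)}\|_{\infty}\le K$ and $\|\FW^{(K)}\|_p\le\|\FW\|_p$. The key estimate is the uniform truncation bound: for every $\FW\in\mathcal{B}_{L^p}(C)$,
\begin{equation}
\|\FW-\FW^{(K)}\|_{\Box}\le\|\FW-\FW^{(K)}\|_1\le\frac{1}{K^{p-1}}\|\FW\|_p^p\le\frac{C^p}{K^{p-1}},
\end{equation}
where the first inequality is (\ref{equ: norm-inequalities}), and the second follows because on $\{|\FW|>K\}$ one has $|\FW|\le|\FW|^p/K^{p-1}$. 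Since $p>1$, the right-hand side tends to $0$ as $K\to\infty$ \emph{uniformly} in $\FW$; this is the exact point where the hypothesis $p>1$ is essential, and where the corresponding $L^1$ statement would break down.

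Next I would extract a convergent subsequence by a diagonal argument. Given an arbitrary sequence $\{\FW_n\}\subset\mathcal{B}_{L^p}(C)$, for each fixed $K$ the truncations $\{\FW_n^{(K)}\}_n$ lie in $\mathcal{B}_{L^{\infty}}(K)=K\cdot\GSO$, which is compact in $\delta_{\Box}$ by the scaling argument and Theorem \ref{thm: compactness in G1 extended graphon space}; hence some subsequence converges. Applying this successively for $K=1,2,\dots$ and diagonalizing yields a single subsequence $\{\FW_{n_j}\}$ with $\FW_{n_j}^{(K)}\to\FU_K$ in the cut metric for every $K$. To see that $\{\FW_{n_j}\}$ itself converges, I would use the triangle inequality for $\delta_{\Box}$: the distance $\delta_{\Box}(\FW_{n_j},\FW_{n_k})$ is bounded by the two truncation errors $\delta_{\Box}(\FW_{n_j},\FW_{n_j}^{(K)})$ and $\delta_{\Box}(\FW_{n_k},\FW_{n_k}^{(K)})$, each at most $C^p/K^{p-1}$, plus the middle term $\delta_{\Box}(\FW_{n_j}^{(K)},\FW_{n_k}^{(K)})$, which vanishes as $j,k\to\infty$ for fixed $K$. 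Choosing $K$ large and then $j,k$ large shows $\{\FW_{n_j}\}$ is $\delta_{\Box}$-Cauchy; letting $j\to\infty$ in the same estimate shows $\{\FU_K\}$ is $\delta_{\Box}$-Cauchy in $K$, and I would take the candidate limit $\FW^{\ast}$ to be its cut limit, so that the uniform bound gives $\FW_{n_j}\to\FW^{\ast}$.

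The main obstacle — the step I would treat most carefully — is verifying that the limit genuinely lies in $\mathcal{B}_{L^p}(C)$, i.e. that $\FW^{\ast}$ is represented by a symmetric measurable function with $\|\FW^{\ast}\|_p\le C$. Here I would invoke lower semicontinuity of the $L^p$ norm under cut convergence for $p>1$: since $\|\FW_{n_j}^{(K)}\|_p\le C$ and cut convergence of uniformly bounded graphons entails weak convergence (modulo the measure-preserving relabelings built into $\delta_{\Box}$, under which $\|\cdot\|_p$ is invariant), weak lower semicontinuity of the convex functional $\FW\mapsto\|\FW\|_p$ gives $\|\FU_K\|_p\le C$, and passing $K\to\infty$ yields $\|\FW^{\ast}\|_p\le C$. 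This places $\FW^{\ast}\in\mathcal{B}_{L^p}(C)$ and shows every sequence has a subsequence converging in $\delta_{\Box}$ to a point of the ball, which is the required compactness.
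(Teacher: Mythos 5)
Your overall architecture is sound and is essentially the standard truncation route to this result of \cite{borgs2014lp} (the paper itself states the theorem by citation only, so there is no in-paper proof to compare against): the scaling reduction for $p=\infty$, the uniform truncation estimate $\|\FW-\FW^{(K)}\|_\Box\leq C^p/K^{p-1}$ with its correct use of $p>1$, the level-wise compactness of $\{\FW_n^{(K)}\}_n$ inside $K\cdot\GSO$ via Theorem \ref{thm: compactness in G1 extended graphon space}, and the diagonalization are all correct, and together they establish that $\mathcal{B}_{L^p}(C)$ is \emph{totally bounded} in $\delta_\Box$ and that the diagonal subsequence is $\delta_\Box$-Cauchy. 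The genuine gap is the final existence step: from ``$\{\FU_K\}$ is $\delta_\Box$-Cauchy'' you ``take the candidate limit $\FW^{\ast}$ to be its cut limit.'' A Cauchy sequence converges only in a \emph{complete} ambient space, and no completeness is available here: the known completeness of graphon spaces under $\delta_\Box$ is itself a consequence of compactness and holds only for uniformly bounded families, whereas the $\FU_K$ are not uniformly bounded, and completeness of $\mathcal{B}_{L^p}(C)$ is — given your total-boundedness estimate — exactly equivalent to the compactness being proved. As written, the argument is circular at precisely the point where the remaining work lies.

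The gap is fixable with tools consistent with your sketch, and the fix also lets you bypass $\{\FU_K\}$ entirely. Work directly with the diagonal subsequence $\{\FW_{n_j}\}$, which your estimate shows is $\delta_\Box$-Cauchy. First upgrade to cut-\emph{norm} Cauchy: passing to a fast subsequence with $\delta_\Box(\FW_{n_j},\FW_{n_{j+1}})<2^{-j}$ and using the invariance of $\delta_\Box$ and of $\|\cdot\|_\Box$ under measure-preserving relabelings, choose bijections $\phi_j$ inductively so that $\|\FW_{n_j}^{\phi_j}-\FW_{n_{j+1}}^{\phi_{j+1}}\|_\Box<2^{-j+1}$. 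Next invoke weak sequential compactness of the ball $\{\FW:\|\FW\|_p\leq C\}$ in $L^p[0,1]^2$ for $1<p<\infty$ (reflexivity) — a second, independent place where $p>1$ is essential, and where the $L^1$ case breaks down — to extract $\FW^{\ast}$ with $\FW_{n_j}^{\phi_j}\rightharpoonup\FW^{\ast}$ along a further subsequence. Weak convergence gives $\int_{S\times T}(\FW_{n_j}^{\phi_j}-\FW^{\ast})\,dx\,dy\rightarrow 0$ for each \emph{fixed} pair of measurable sets $S,T$; combining with the cut-norm Cauchy property yields uniformity over $S,T$, since for fixed $j$ and any $S,T$ one has $|\int_{S\times T}(\FW_{n_j}^{\phi_j}-\FW^{\ast})|\leq\|\FW_{n_j}^{\phi_j}-\FW_{n_k}^{\phi_k}\|_\Box+|\int_{S\times T}(\FW_{n_k}^{\phi_k}-\FW^{\ast})|$, and letting $k\to\infty$ gives $\|\FW_{n_j}^{\phi_j}-\FW^{\ast}\|_\Box\leq\sup_{k\geq j}\|\FW_{n_j}^{\phi_j}-\FW_{n_k}^{\phi_k}\|_\Box\rightarrow 0$. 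Weak lower semicontinuity of $\|\cdot\|_p$, together with its invariance under the $\phi_j$, then gives $\|\FW^{\ast}\|_p\leq C$ directly — which also replaces your separate lower-semicontinuity passage through $\FU_K$, where, incidentally, the parenthetical ``cut convergence entails weak convergence modulo relabelings'' relies on the same relabeling upgrade you left implicit. With this substitution the proof closes.
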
	

By compactness, infinite sequences of graphons will necessarily possess one or more sub-sequential limits under the cut metric. 

\subsection{Step Function Graphons}
A function $\mathbf{W} \in \ESO$ is called a {\it step function} if there is a partition $Q=\{Q_1,...,Q_k\}$ of $[0, 1]$ into measurable sets such that $\mathbf{W}$ is constant on every product set $Q_i \times Q_j$. 
The sets $Q_i$ are the \emph{steps} of $\mathbf{W}$. 

Graphons generalize weighted graphs in the following sense. 
For every weighted graph $G$ with $N$ nodes, a step function ${\FW_G} \in \ESO$ is given
by 
partitioning $[0,1]$ into $N$ measurable sets $Q_1,\cdots ,Q_N$ of measure $\mu(Q_i)=\frac{\alpha_i}{\alpha(G)} $ and defining 
\begin{equation}
  \FW_G(x,y) := \sum_{i=1}^N  \sum_{j=1}^N \mathds{1}_{Q_i}(x) \mathds{1}_{Q_j}(y) \beta_{ij}(G), ~~ (x,y) \in [0,1]^2,
\end{equation}
 where $\alpha_i$ denotes the node weight of $i^{th}$ node, $\alpha(G)=\sum_i\alpha_i$ and $\beta_{ij}(G)$ denotes the  weight of the edge from node $i$ to node $j$ (i.e., $\beta_{ij}(G)$ is the $ij^{th}$ entry in the adjacency matrix of $G$). Evidently the function $\FW_G$ depends on the labeling of the nodes of $G$.

 We define the \emph{uniform partition} $P^{N}=\{P_1, P_2, ..., P_{N}\}$  of $[0,1]$  by setting $P_k=[\frac{k-1}{N}, \frac{k}{N}), k\in\{1, N-1\}$ and $P_N=[\frac{N-1}N,1]$. Then $\mu({P_i})=\frac1{N}$, $ i \in\{1,2,...,N\}$. Under the uniform partition, the step functions can be represented by the pixel diagram on the unit square. See \cite{lovasz2012large}.

\subsection{Graphons as Operators}
 A graphon $\mathbf{W} \in \ESO$ can be interpreted as an operator 
$\mathbf{W}: L^{2}[0,1] \rightarrow L^{2}[0,1].$
The operation on $\mathbf{v} \in L^2[0,1]$ is defined as follows:
\begin{equation} \label{equ: graphon operation on functions}
	[\mathbf{Wv}](x)=\int_0^1\mathbf{W}(x,\alpha)\mathbf{v}(\alpha)d\alpha.
\end{equation}
The operator product is then defined by
\begin{equation} \label{equ: graphon operation on graphons}
	[\mathbf{UW}](x,y)=\int_0^1\mathbf{U}(x,z)\mathbf{W}(z,y)dz,
\end{equation}
where $\mathbf{U,W} \in \ESO.$ See \cite{lovasz2012large} for more details. 
For simplicity of notation, $\FU\FW$ is used to denote the graphon given by the convolution in (\ref{equ: graphon operation on graphons}); similarly, $\FW \Fv$ denotes the function defined by (\ref{equ: graphon operation on functions}).
Note that if $\mathbf{U}\in \ESO$ and $\mathbf{W} \in \ESO$, then $\mathbf{UW}\in \ESO$, since for all $x, y \in [0,1]$
\begin{equation}
	\begin{aligned}
	 \left|[\mathbf{UW}](x,y)\right|%
	 &\leq  \int_0^1|\mathbf{U}(x,z)\mathbf{W}(z,y)|dz \leq 1.\\  %
	\end{aligned} 
\end{equation}
Consequently, the power $\mathbf{W}^n$ of an operator $\mathbf{W} \in \ESO$ is defined as
\begin{equation*}
	\mathbf{W}^n(x,y)=\int_{[0,1]^n}\mathbf{W}(x,\alpha_1)\cdots \mathbf{W}(\alpha_{n-1},y)d\alpha_1\cdots d\alpha_{n-1}
\end{equation*}
with $\mathbf{W}^n \in \ESO $ $( n \geq 1)$.
$\mathbf{W^0}$  is formally defined as the identity operator $\BI$ on  functions in $L^2{[0,1]}$, but we note that $\mathbf{W^0}$ is not a graphon.

Any $\FA \in \ESO$ gives a self-adjoint compact operator \cite{ShuangPeterCDC19W2} and hence has a discrete spectral decomposition as follows: 
\begin{equation} \label{equ: graphon-infinite-spectral-sum}
  \FA(x,y) = \sum_{\ell=1}^{\infty} \lambda_\ell \Ff_\ell(x) \Ff_\ell(y), \quad (x,y)\in[0,1]^2,
\end{equation}
where the convergence is in the $L^2{[0,1]^2}$ sense, $\{\lambda_1, \lambda_2,....\}$  is the set of eigenvalues (which are not necessarily distinct) with decreasing absolute values, and $\{\Ff_1, \Ff_2,...\}$ represents the set of the corresponding orthonormal eigenfunctions (i.e. $\|\Ff_\ell\|_2=1$, and $\langle \Ff_\ell, \Ff_k\rangle =0$ if $l\neq k$). 
The only accumulating point of the eigenvalues is zero \cite{lovasz2012large}, that is, $\lim_{\ell\rightarrow \infty} \lambda_\ell =0.$ 
This implies that $\FA$ can be approximated by a finite truncation of the spectral decomposition which preserves the most significant eigenvalues \cite{ShuangPeterCDC19W2}.

\subsection{The Graphon Unitary Operator Algebra}
It is evident that the operator composition defined in (\ref{equ: graphon operation on graphons}) above yields an operator algebra with a multiplicative  binary operation possessing the associativity, left distributivity, right distributivity properties and compatibility with the scalar field $\BR$, that is, for any $\FV, \FW, \FH$ in the vector space $L^2{[0,1]}^2$ and $a, b \in \BR,$
$$
\begin{aligned}
	& (\FV \FW) \FH = \FV(\FW \FH),\\
	&\FV(\FW+\FH)=\FV \FW+\FV \FH,\\
	&(\FW+\FH)\FV=\FW\FV+\FH\FV,\\
	&(a\FW)(b\FH)=(ab)\FW\FH.
\end{aligned}
$$
Thus we have an operator algebra $\mathcal{G_A}$ over the field $\BR$ acting on elements of $L^2[0,1]$ with operator multiplication 
as given in (\ref{equ: graphon operation on functions}).
 By adjoining  the identity element $\BI$ to the algebra $\mathcal{G_A}$ (see e.g. \cite{kadison2015fundamentals}) we obtain a unitary algebra $\mathcal{G_{AI}}$. The identity element $\BI$ is defined as follows:
for any $\FW \in L^2[0,1]^2$
\begin{equation}
[\BI\FW](x,y)= \int_0^1 \FW(z,y)  \delta(x,z) dz = \FW(x,y),
\end{equation}
where  $\delta(\cdot, z)dz$ is the measure satisfying
$
\int_0^1 u(z) \delta(x,z)  dz =u(x)
$
for all $u\in L^2[0,1]$,
and in particular
$\int_0^1 \delta(x, z) dz = 1$. 

The \emph{graphon unitary operator algebra} $\mathcal{G_{AI}}$ will be used in the definition of the graphon dynamical systems. 
 More specifically, 
we use the subset $\mathcal{G}^1_\mathcal{{AI}}:=\{(a\BI+ \FA): \FA \in 
\mathcal{G}^1_\mathcal{{A}}, a \in \BR\}$
where $\mathcal{G}^1_\mathcal{{A}}$ is the subset of $\mathcal{G}_\mathcal{{A}}$  that corresponds to $\ESO$.

\subsection{Graphon Differential Equations}

Let $X$ be a Banach space.
%
$\mathcal{L}(X)$ denotes the Banach algebra of all linear continuous mappings $\BT: X\rightarrow X$, endowed with the norm
   $\|\BT\|_{\text{op}} = \sup_{x\in X, \|x\|=1} \|\BT x\|.$
 %
A mapping ${\BS}:\BR \rightarrow \mathcal{L}(X)$ is said to be a {\it strongly continuous  semigroup} on $X$ if the following properties hold: %
\begin{enumerate}
	 \item ${\BS}(0)=\BI, ~ \BS(t+s)=\BS(t)\BS(s), \quad \forall t,s \geq 0$
	 \item for all $x\in X$, $\BS(\cdot)x$ is continuous on $\BR$.
\end{enumerate}
A \emph{uniformly continuous semigroup} is a strongly continuous semigroup $\BS$ such that
$
	\lim _{{t\to 0^{+}}}\left\|\BS(t)-\BI\right\|_{\text{op}}=0,
$
with $\|\cdot\|_{\text{op}}$ as the operator norm on a Banach space.
The \emph{infinitesimal generator} $\BA$ of a strongly continuous semigroup $\BS$ is the linear operator in $X$ defined by
				$\BA x=\lim_{t \rightarrow 0^+}\frac{1}{t}[\BS(t)x-x], \text{ for all }x\in \mathcal{D}(\BA),$
where $\mathcal{D}(\BA)=\{x \in X:  \text{ s.t. } \lim_{t\rightarrow 0^+}\frac{1}{t}\left[\BS(t)x-x\right] \text{ exists}\}.\\
$

 Let $\mathbf{A}:[0,1]^2\rightarrow [-1,1]$ be a graphon and $\alpha \in \BR$.  Hence $(\alpha \BI + \FA)$ is a bounded 
 linear operator from $ L^2[0,1]$ to $L^2[0,1]$. Following \cite{pazy1983semigroups}, $(\alpha \BI + \FA)$ is  the infinitesimal generator of the uniformly (and hence necessarily strongly) continuous semigroup 
$$
	\BS(t):=e^{(\alpha \BI + \FA)t}=\sum_{k=0}^{\infty} \frac{t^k\mathbf{(\alpha \BI + \FA)}^k}{k!}.
$$
Therefore, the initial value problem of the \emph{graphon differential equation}
\begin{equation}
	\mathbf{\dot{y}_t}=\mathbf{(\alpha \BI + \FA)y_t},\quad \mathbf{y_0} \in L^2[0,1] \label{Operator-Differential-Equation}
\end{equation}
has a  solution given by 
$
	\mathbf{y_t}=e^{(\alpha \BI + \FA)t}\mathbf{y_0}.
$

\begin{lemma}\label{lem:seperation-graphon-exp}
 Let $\mathbf{A}:[0,1]^2\rightarrow [-1,1]$ be a graphon and $\alpha \in \BR$. 
Then  $ e^{(\alpha \BI+ \FA)t} = e^{\alpha t} e^{\FA t}$ holds for all  $t\geq 0$.
\end{lemma}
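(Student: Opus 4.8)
The plan is to exploit the fact that the scalar operator $\alpha \BI$ commutes with every bounded operator, in particular with $\FA$, and then to invoke the standard product rule for exponentials of commuting elements of a Banach algebra. Everything takes place in $\mathcal{L}(L^2[0,1])$, where both $\FA$ and $\BI$ are bounded operators (a graphon induces a bounded, indeed compact self-adjoint, operator, and $\|\BI\|_{\text{op}}=1$), so each exponential series appearing below converges absolutely in the operator norm.

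First I would record the commutativity relation $(\alpha \BI)\FA = \alpha \FA = \FA(\alpha \BI)$, so that $\alpha \BI t$ and $\FA t$ commute for each fixed $t\geq 0$. Because they commute, the binomial theorem is valid in the algebra:
\begin{equation*}
  \bigl((\alpha \BI + \FA)t\bigr)^n = \sum_{k=0}^{n} \binom{n}{k}\, (\alpha \BI t)^k (\FA t)^{n-k}.
\end{equation*}
Substituting this into the defining power series for $e^{(\alpha \BI + \FA)t}$ and rearranging the resulting double series as a Cauchy product then yields
\begin{equation*}
  e^{(\alpha \BI + \FA)t} = \Bigl(\sum_{k=0}^{\infty}\frac{(\alpha \BI t)^k}{k!}\Bigr)\Bigl(\sum_{j=0}^{\infty}\frac{(\FA t)^{j}}{j!}\Bigr) = e^{\alpha \BI t}\, e^{\FA t}.
\end{equation*}

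It then remains only to identify $e^{\alpha \BI t}$ explicitly. Since $\BI^k=\BI$ for all $k\geq 0$, I would compute
\begin{equation*}
  e^{\alpha \BI t} = \sum_{k=0}^{\infty}\frac{(\alpha t)^k}{k!}\,\BI = e^{\alpha t}\,\BI,
\end{equation*}
which acts on $L^2[0,1]$ simply as multiplication by the scalar $e^{\alpha t}$. Combining the last two displays gives $e^{(\alpha \BI + \FA)t} = e^{\alpha t} e^{\FA t}$, as claimed.

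The hard part will be the rigorous justification of the rearrangement of the double series into a Cauchy product; the main obstacle is thus analytic rather than algebraic. One must confirm that the exponential series converge absolutely in the operator norm, which follows from $\|(\alpha \BI + \FA)t\|_{\text{op}} \leq (|\alpha| + \|\FA\|_{\text{op}})\,t < \infty$ together with submultiplicativity of $\|\cdot\|_{\text{op}}$, and then appeal to the fact that the Cauchy product of two absolutely convergent series in a Banach algebra converges to the product of their limits. The commutativity of $\alpha \BI$ with $\FA$ is precisely what legitimizes the binomial expansion, and hence the whole argument; without it the identity would in general fail.
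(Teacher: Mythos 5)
Your proof is correct: the commutativity of $\alpha\BI$ with $\FA$, the binomial expansion, the Cauchy-product rearrangement (justified by absolute convergence in the operator norm), and the identification $e^{\alpha\BI t}=e^{\alpha t}\BI$ are all sound. The paper omits the proof of this lemma as routine (``Readers can readily check this''), and your argument is precisely the standard verification the authors have in mind, so there is nothing to reconcile.
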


Readers can readily check this and hence we omit the proof. 

\begin{lemma}\label{lem:exp-of-I+A}
   Consider any $\Fu\in L^2[0,1]$,  any $\BA \in \mathcal{G}^1_\mathcal{{AI}}$, and any $t\in[0,T]$. Let $\BA = \alpha \BI  + \FA$, $\FA \in \ESO$, then the following holds
   \begin{equation}
     \left\|e^{\BA t} \Fu \right\|_2  \leq  e^{t\left(\alpha +\|\FA\|_\textup{op}\right)} \|\Fu\|_2 \leq  e^{t\left(\alpha +\|\FA\|_2\right)} \|\Fu\|_2.
   \end{equation}
\end{lemma}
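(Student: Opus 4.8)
The plan is to reduce the bound on the full semigroup to a bound on the graphon semigroup $e^{\FA t}$ alone, exploiting the scalar factor. First I would invoke Lemma~\ref{lem:seperation-graphon-exp} to write $e^{\BA t}\Fu = e^{\alpha t}\, e^{\FA t}\Fu$, so that $\|e^{\BA t}\Fu\|_2 = e^{\alpha t}\,\|e^{\FA t}\Fu\|_2$, where the scalar $e^{\alpha t}$ is pulled outside the norm unchanged (it is positive, since $t\geq 0$).

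Next I would control $\|e^{\FA t}\Fu\|_2$ through the exponential power series $e^{\FA t} = \sum_{k=0}^\infty \frac{t^k}{k!}\FA^k$. Applying the triangle inequality termwise and using submultiplicativity of the operator norm, namely $\|\FA^k \Fu\|_2 \leq \|\FA\|_{\textup{op}}^k \|\Fu\|_2$, the series is dominated by $\sum_{k=0}^\infty \frac{t^k}{k!}\|\FA\|_{\textup{op}}^k \|\Fu\|_2 = e^{t\|\FA\|_{\textup{op}}}\|\Fu\|_2$. Combining with the scalar factor yields $\|e^{\BA t}\Fu\|_2 \leq e^{t(\alpha + \|\FA\|_{\textup{op}})}\|\Fu\|_2$, which is the first inequality. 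The termwise estimate is justified because $\FA$ is a bounded operator, so the series converges absolutely in $\mathcal{L}(L^2[0,1])$ and the norm is continuous along partial sums.

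For the second inequality it suffices to show $\|\FA\|_{\textup{op}} \leq \|\FA\|_2$, after which monotonicity of the exponential finishes the argument. Since $\FA \in \ESO$ is an integral (Hilbert--Schmidt) operator with kernel $\FA(x,y)$, I would apply the Cauchy--Schwarz inequality to $[\FA\Fv](x) = \int_0^1 \FA(x,\alpha)\Fv(\alpha)\,d\alpha$, giving $\|\FA\Fv\|_2^2 \leq \int_0^1\!\!\int_0^1 |\FA(x,\alpha)|^2\,d\alpha\,dx \cdot \|\Fv\|_2^2 = \|\FA\|_2^2\,\|\Fv\|_2^2$; taking the supremum over unit-norm $\Fv$ gives $\|\FA\|_{\textup{op}} \leq \|\FA\|_2$.

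I do not expect a serious obstacle here: the only subtleties are the bookkeeping in the power-series estimate (absolute convergence in operator norm) and the elementary Hilbert--Schmidt bound, both of which are standard for bounded integral operators. The one point worth stating explicitly is that $t \geq 0$, so that $e^{\alpha t} > 0$ may be extracted from the norm without an absolute value, consistent with the semigroup being defined for $t \geq 0$.
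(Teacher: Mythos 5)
Your proposal is correct and takes essentially the same route as the paper: the paper obtains the result immediately by combining Lemma~\ref{lem:seperation-graphon-exp} (to factor out $e^{\alpha t}$) with Lemma~\ref{lem:graphon-exp-norm}, whose proof is exactly your power-series estimate $\|\FA^k \Fu\|_2 \leq \|\FA\|_{\textup{op}}^k\|\Fu\|_2$ followed by the Cauchy--Schwarz bound $\|\FA\|_{\textup{op}} \leq \|\FA\|_2$ of Lemma~\ref{lem:operator-norm-and-L2-norm}. You have simply reproved those two auxiliary lemmas inline rather than citing them, with no gaps.
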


This result is immediate from Lemma \ref{lem:seperation-graphon-exp} and Lemma \ref{lem:graphon-exp-norm}.

\begin{theorem}[Appendix \ref{sec:Proofs for Graphon System Properties}] \label{theorem: convergence in operator} 

For any  $\alpha_N, \alpha, t \in \BR$, any $\FA_\FN, \FA_* \in \ESO$, and any $ \mathbf{x} \in  L^2[0,1]$, the following holds: 
\begin{align}
   &\left\|e^{\FA_\mathbf{N}t}\Fx-e^{\FA_*t}\Fx \right\|_2 %
    \leq 
      te^t \|\FA_{\Delta}^{\mathbf{N}}\|_\textup{op}\|\Fx\|_2, \label{equ:exp-of-A} \\[3pt]
    &\left\|e^{(\alpha_N \BI + \FA_\FN)t}\Fx - e^{(\alpha \BI + \FA_*)t}\Fx\right\|_2 \leq te^{(\alpha_N+1) t} \|\FA_{\Delta}^\mathbf{N}\|_\textup{op}\|\Fx\|_2 \nonumber\\
  &\qquad \qquad \qquad \qquad    + |\alpha-\alpha_N| t  e^{(L_\alpha +\|\FA_*\|_\textup{op})t}\left\|\Fx\right\|_2, \label{equ:exp-of-A+I} ~
\end{align}
where $\FA_{\Delta}^\FN = \FA_\FN -\FA_*$ and $L_\alpha= \max\{|\alpha|, |\alpha_N|\}$. 
Furthermore if  a sequence of graphons $\{\mathbf{A_N}\}_{N=1}^{\infty}$ and that of real numbers $\{\alpha_N\}_{N=1}^{\infty}$ converge as follows
\begin{equation}\label{eq:convergence-op-graphon}
  \lim_{N\rightarrow \infty} \|\FA_\FA-\FA_*\|_\textup{op} = 0 \quad \text{and} \quad \lim_{N\rightarrow\infty}|\alpha_N-\alpha|=0,
\end{equation}
then for any $\Fx\in L^2[0,1]$ and any $T>0$,
\begin{align}
  &\lim_{N\rightarrow\infty} \sup_{t\in[0,T]} \left\|e^{\mathbf{A_N}t}\Fx- e^{\mathbf{A_*}t}\Fx\right\|_2 =0,\label{equ:convergence-exp-1}\\
  & \lim_{N\rightarrow \infty}\sup_{t\in[0,T]}\left\|e^{(\alpha_N \BI + \FA_\FN)t}\Fx - e^{(\alpha \BI + \FA_*)t}\Fx\right\|_2= 0.\label{equ:convergence-exp-2}
\end{align}
\end{theorem}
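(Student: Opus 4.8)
The plan is to reduce everything to a Duhamel-type integral identity for the difference of two operator exponentials and then apply the norm bound of Lemma~\ref{lem:exp-of-I+A}. First I would establish, for the bounded operators $\FA_\FN,\FA_*\in\ESO$ acting on $L^2[0,1]$, the identity
\begin{equation*}
  e^{\FA_\FN t}-e^{\FA_* t}=t\int_0^1 e^{s\FA_\FN t}\,\FA_\Delta^\FN\,e^{(1-s)\FA_* t}\,ds,
\end{equation*}
obtained by integrating $\frac{d}{ds}\big[e^{s\FA_\FN t}e^{(1-s)\FA_* t}\big]=e^{s\FA_\FN t}(\FA_\FN-\FA_*)t\,e^{(1-s)\FA_* t}$ from $s=0$ to $s=1$, using that $\FA_\FN$ commutes with $e^{s\FA_\FN t}$. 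Applying this to $\Fx$, taking $L^2$ norms, and invoking submultiplicativity of the operator norm gives $\|e^{\FA_\FN t}\Fx-e^{\FA_* t}\Fx\|_2\leq t\int_0^1\|e^{s\FA_\FN t}\|_\textup{op}\,\|\FA_\Delta^\FN\|_\textup{op}\,\|e^{(1-s)\FA_* t}\Fx\|_2\,ds$.

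Next I would bound the two exponential factors. By Lemma~\ref{lem:exp-of-I+A} with $\alpha=0$, together with the norm chain~(\ref{equ: norm-inequalities}) which yields $\|\FA\|_\textup{op}\leq\|\FA\|_2\leq1$ for every $\FA\in\ESO$, we obtain $\|e^{s\FA_\FN t}\|_\textup{op}\leq e^{st}$ and $\|e^{(1-s)\FA_* t}\Fx\|_2\leq e^{(1-s)t}\|\Fx\|_2$. Substituting these, the integrand collapses to the constant $e^{t}\|\FA_\Delta^\FN\|_\textup{op}\|\Fx\|_2$, and integrating over $s\in[0,1]$ delivers~(\ref{equ:exp-of-A}) exactly.

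For~(\ref{equ:exp-of-A+I}) I would first use Lemma~\ref{lem:seperation-graphon-exp} to factor $e^{(\alpha_N\BI+\FA_\FN)t}=e^{\alpha_N t}e^{\FA_\FN t}$ and similarly for the starred system, then insert the intermediate term $e^{\alpha_N t}e^{\FA_* t}\Fx$ and apply the triangle inequality:
\begin{align*}
  \|e^{\alpha_N t}e^{\FA_\FN t}\Fx-e^{\alpha t}e^{\FA_* t}\Fx\|_2 &\leq e^{\alpha_N t}\|e^{\FA_\FN t}\Fx-e^{\FA_* t}\Fx\|_2 \\
  &\quad +|e^{\alpha_N t}-e^{\alpha t}|\,\|e^{\FA_* t}\Fx\|_2.
\end{align*}
The first summand is controlled by~(\ref{equ:exp-of-A}), producing $te^{(\alpha_N+1)t}\|\FA_\Delta^\FN\|_\textup{op}\|\Fx\|_2$. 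For the second, the mean value theorem gives $|e^{\alpha_N t}-e^{\alpha t}|\leq t|\alpha-\alpha_N|e^{L_\alpha t}$ with $L_\alpha=\max\{|\alpha|,|\alpha_N|\}$, while Lemma~\ref{lem:exp-of-I+A} supplies $\|e^{\FA_* t}\Fx\|_2\leq e^{t\|\FA_*\|_\textup{op}}\|\Fx\|_2$; multiplying these yields the second term of~(\ref{equ:exp-of-A+I}).

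Finally, the uniform convergence statements~(\ref{equ:convergence-exp-1}) and~(\ref{equ:convergence-exp-2}) are immediate: on $[0,T]$ every scalar prefactor ($te^{t}$, $te^{(\alpha_N+1)t}$, $te^{(L_\alpha+\|\FA_*\|_\textup{op})t}$) stays uniformly bounded, since $\alpha_N\to\alpha$ forces $\{\alpha_N\}$ and hence $L_\alpha$ to remain bounded; taking the supremum over $t\in[0,T]$ and letting $N\to\infty$ under the hypotheses~(\ref{eq:convergence-op-graphon}) drives both right-hand sides to zero. The only step requiring genuine care is the justification of the Duhamel identity in the operator setting---differentiating the $\mathcal{L}(L^2[0,1])$-valued map $s\mapsto e^{s\FA_\FN t}e^{(1-s)\FA_* t}$ and integrating it in the Bochner sense---but since $\FA_\FN$ and $\FA_*$ are bounded operators generating uniformly continuous semigroups, the map $s\mapsto e^{s\FA_\FN t}$ is norm-continuous, so this is standard and presents no real obstacle.
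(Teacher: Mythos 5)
Your proof is correct, and for the core bound \eqref{equ:exp-of-A} it takes a genuinely different route from the paper. The paper proceeds purely algebraically: writing $\FA_\FN^k-\FA_*^k=P_{k-1}(\FA_\FN,\FA_*)(\FA_\FN-\FA_*)$ with $P_{k-1}(x,y)=\sum_{i=0}^{k-1}x^{k-1-i}y^{i}$, bounding $\|P_{k-1}(\FA_\FN,\FA_*)\|_\textup{op}\leq k$ term by term (each product of graphons stays in $\ESO$ and hence has norm at most $1$), and then summing the exponential series to obtain $\sum_{k\geq1}\frac{t^k}{k!}\,k\,\|\FA_\Delta^\FN\|_\textup{op}\|\Fx\|_2=te^{t}\|\FA_\Delta^\FN\|_\textup{op}\|\Fx\|_2$. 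You instead use the Duhamel (variation-of-constants) identity $e^{\FA_\FN t}-e^{\FA_* t}=t\int_0^1 e^{s\FA_\FN t}\,\FA_\Delta^\FN\,e^{(1-s)\FA_* t}\,ds$, which is legitimate here precisely because both generators are bounded, so the interpolating map is norm-differentiable and the Bochner integral is unproblematic --- the point you correctly flag as the only delicate step. Both routes rest on the same two facts ($\|\FW\|_\textup{op}\leq\|\FW\|_2\leq1$ for $\FW\in\ESO$, and submultiplicativity) and yield the identical constant $te^{t}$; the paper's telescoping argument is more elementary and self-contained, requiring no operator-valued calculus, while your Duhamel identity is the standard semigroup-theoretic device and would generalize more readily, for instance to uniformly bounded non-graphon perturbations or to sharper bounds obtained by estimating $\|e^{s\FA_\FN t}\|_\textup{op}$ nonuniformly inside the integral. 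For \eqref{equ:exp-of-A+I} and the uniform convergence claims your argument coincides with the paper's: the same factorization via Lemma \ref{lem:seperation-graphon-exp}, the same intermediate term $e^{\alpha_N t}e^{\FA_* t}\Fx$, and the same scalar estimate $|e^{\alpha_N t}-e^{\alpha t}|\leq t\,|\alpha-\alpha_N|\,e^{L_\alpha t}$ (you invoke the mean value theorem where the paper expands the series), with the uniform-in-$t$ conclusion following identically from the boundedness of $\{\alpha_N\}$ and hence of $L_\alpha$. One shared and harmless caveat: like the paper's own proof, yours implicitly assumes $t\geq 0$, which is the only regime in which the stated bounds are used.
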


The operator norm in \eqref{equ:exp-of-A}, \eqref{equ:exp-of-A+I} and \eqref{eq:convergence-op-graphon} can be replaced by the stronger $L^2[0,1]$ norm since $\|\FW\|_{\text{op}}\leq \|\FW\|_2$ for any $\FW\in \ESO$ (see Lemma \ref{lem:operator-norm-and-L2-norm}). 

\section{Network Systems and Their Limit Systems}
\subsection{Network System Models}
\begin{definition}[Network System]
  Consider an interlinked network ${S}^{N}$ of linear (symmetric) dynamical 
 subsystems $\{S_i^N; 1\leq i \leq N \}$, each with an $n$ dimensional state space. 
 The subsystem $S_i^N$ at the node $V_i$ in the network $G_N(V,E)$ has  interactions with $S_j^N, 1\leq j \leq N,$ specified as below:
 \vspace{-0.2cm}
 \begin{equation} \label{equ:network-sys}
 S_i^N: ~
 \begin{aligned}
    &\dot{x}^i_t=  \alpha_N x^i_t + \frac{1}{nN}\sum_{j=1}^{N} {A}_{ij} x^j_t+\beta_N u^i_t+ \frac1{nN} \sum_{j=1}^N {B}{_{ij}} u^j_t, \quad \\
  &x^i_t, u^i_t \in \BR^n,  i\in\{1,...,N\},
  \end{aligned}
 \end{equation}  
  with ${A}_{N}= [{A}_{ij}], {B}_{N} =[{B}_{ij}] \in \BR^{nN\times nN}$ as the (symmetric) block-wise adjacency matrices of $G_N(V,E)$ and  of the input graph, %
  where $A_{ij} =[0]$ if $S^N_i$ has no connection to $S^N_j$ and similarity for $ B_{ij}$.  We call ${S}^{N}$ a \emph{network system}.  
\end{definition}

 Then the (symmetric) linear dynamics for the network system ${S}^N(A_N, B_N, G_N)$ can be represented by 
 \begin{equation}  \label{equ: averaging-dynamics}
 {S}^N: \quad 
 	\begin{aligned}
		& \dot{ {x}}_t=\alpha_N x_t + A_N\circ  {x}_t+ \beta_N u_t+B_N\circ  {u}_t,  \\
		&  {x}_t,  {u}_t\in \BR^{nN}, A_{N}, B_{N} \in \BR^{nN\times nN},
	\end{aligned}	
\end{equation}
where   $\circ$ denotes the so called averaging operator given by
$ 
	A_N\circ  {x}= \frac{1}{(nN)}A_N  {x}
$.
Let $\mathcal{S} = \times_{N=1}^{\infty}\mathcal{S^N}$ where $\mathcal{S^N}= \cup_{A_N, B_N, G_N} S^N(A_N, B_N, G_N).$
For simplicity, we require the elements of $A_N$ and $B_N$ to be in $[-1,1]$ for each $N$ (note that in general $A_N$ and $B_N$ have elements that are uniformly bounded real numbers  for which case we would achieve similar results).  In addition, we note that if we take the supremum norm on vectors in $\BR^{nN}$, i.e. $\|x \|_{\infty}=\sup_i|x_i|$, and the corresponding $\circ$ operator norm of $A$, i.e. $\|A \|_{\text{op}\infty}= \sup_{\|x\|_{\infty}\neq 0} \frac{\|A\circ x\|{\infty}}{\|x\|_{\infty}}$, then   $\|A \|_{\text{op}\infty}\leq 1.$    
    
\subsection{Network Systems Represented by Step Functions}

Let $\left\{(\alpha_N I+ A_N; \beta_N I+ B_N)\right\}_{N=1}^{\infty} \in \mathcal{S} $  be a sequence of systems with the node averaging dynamics each of which is described according to (\ref{equ: averaging-dynamics}).
Let $|A_{Nij}|\leq1$ and $|B_{Nij}|\leq 1$ for all $i,j \in\{1,..., nN\}$. 
Let  $\SA, \SB \in \ESO$ be the step functions corresponding one-to-one to $A_N$ and $B_N$; these are specified using the uniform partition $P^{nN}$ of $[0,1]$  by the following \emph{matrix to step function mapping $M_G$}: 
\begin{equation} \label{equ: AN-matrix-to-stepfundtion}
\begin{aligned}
 	\SA(x,y) := \sum_{i=1}^{nN} \sum_{j=1}^{nN} \mathds{1}_{P_i}(x)  \mathds{1}_{P_j}(y)
  A_{Nij}, \quad \forall (x,y) \in [0,1]^2
\end{aligned}
\end{equation}
and similar for $\SB$. In fact, the step function $\SA$ can represent a set of matrices of different sizes given by $\{A_N \otimes \text{one}_m, m\in \mathds{N}\}$ where $\text{one}_m$ is the $m\times m$ matrix of ones. 

Define a {\it{piece-wise constant (PWC) function}} on $\BR$  to be any function of the form $\sum_{k=1}^l\alpha_k\mathds{1}_{I_k}(\cdot)$ where $\alpha_1,...,\alpha_l$ are real numbers and each $I_k$ is a bounded interval (open, closed, or half-open). Let $L^2_{pwc}[0,1]$ denote the space of piece-wise constant $L^2{[0,1]}$ functions under the uniform partition $P^{nN}$.

$\Sut\in L_{pwc}^2[0,1]$ corresponds one-to-one to $u_t\in \BR^{nN}$ via the following  \emph{vector to PWC function mapping} also denoted by $M_G$: 
\begin{equation} \label{equ:u_s and u relation}
\Sut(\alpha):= \sum_{i=1}^{nN} \mathds{1}_{P_i}(\alpha)u_t(i), \quad \forall \alpha \in [0,1],
\end{equation}
and similarly $\Sxt\in L_{pwc}^2[0,1]$  corresponds one-to-one to $x_t\in \BR^{nN}$.
\begin{lemma}[Appendix \ref{sec:Proofs for Graphon System Properties}]\label{lem: Network-Stepfunction-Graphon}
The trajectories of the system in (\ref{equ: averaging-dynamics}) correspond one-to-one under the mapping $M_G$ to the trajectories of the system 
\begin{equation}\label{equ:step-function-dynamical-system}
	\begin{aligned} 
	&\mathbf{\dot{x}^{[N]}_{t}}= (\alpha_N\BI +\SA) \Sxt+(\beta_N \BI+\SB)\Sut, \\
	& \Sxt, \Sut \in L_{pwc}^2[0,1], \SA, \SB \in \ESO\subset \mathcal{G}^1_\mathcal{{AI}}
	\end{aligned}
\end{equation}
with graphon operations defined according to (\ref{equ: graphon operation on functions}). 
\end{lemma}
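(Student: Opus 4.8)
The plan is to exhibit $M_G$ as a linear isomorphism from $\BR^{nN}$ onto $L^2_{pwc}[0,1]$ (with respect to the uniform partition $P^{nN}$) that intertwines the matrix dynamics \eqref{equ: averaging-dynamics} with the graphon dynamics \eqref{equ:step-function-dynamical-system}; trajectories of one system are then carried bijectively onto trajectories of the other.

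First I would record that $M_G$, defined on vectors by \eqref{equ:u_s and u relation}, is a linear bijection: it sends the $i$th standard basis vector of $\BR^{nN}$ to the indicator $\mathds{1}_{P_i}$, and $\{\mathds{1}_{P_1},\dots,\mathds{1}_{P_{nN}}\}$ is a basis of $L^2_{pwc}[0,1]$, so $M_G$ is invertible. Because $M_G$ is a fixed linear map that does not depend on $t$, it commutes with differentiation in time: if $\Sxt=M_G(x_t)$ then $\DSxt=M_G(\dot{x}_t)$. This reduces the lemma to checking that $M_G$ transports the right-hand side of \eqref{equ: averaging-dynamics} to that of \eqref{equ:step-function-dynamical-system}.

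The crux is the intertwining identity
\[
  \SA\, M_G(x) = M_G(A_N\circ x),\qquad x\in\BR^{nN},
\]
together with its analogue $\SB\, M_G(u)=M_G(B_N\circ u)$. To verify it I would evaluate the graphon operation \eqref{equ: graphon operation on functions} for $\SA$ defined by \eqref{equ: AN-matrix-to-stepfundtion}: for a point in the $i$th step $P_i$, using mutual disjointness of the $P_j$ (so $\mathds{1}_{P_j}\mathds{1}_{P_k}=0$ for $j\neq k$) and $\mu(P_j)=\tfrac{1}{nN}$,
\[
  [\SA\, M_G(x)]\big|_{P_i} = \sum_{j=1}^{nN} A_{Nij}\,x(j)\int_0^1 \mathds{1}_{P_j}(\alpha)\,d\alpha = \frac{1}{nN}\sum_{j=1}^{nN} A_{Nij}\,x(j),
\]
which is precisely the $i$th entry of $A_N\circ x=\tfrac{1}{nN}A_N x$; moreover the result is constant on each $P_i$, hence lies in $L^2_{pwc}[0,1]$, so the graphon system stays inside the PWC subspace. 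Combined with the trivial identity $\alpha_N\BI\,M_G(x)=M_G(\alpha_N x)$ and the analogous input term, applying $M_G$ term by term to \eqref{equ: averaging-dynamics} yields exactly \eqref{equ:step-function-dynamical-system}.

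Finally, combining the derivative-commutation with the intertwining identities shows that $x_t$ solves \eqref{equ: averaging-dynamics} if and only if $\Sxt=M_G(x_t)$ solves \eqref{equ:step-function-dynamical-system}; since $M_G$ is a bijection on initial conditions and each equation, being linear with bounded operators (cf.\ \eqref{Operator-Differential-Equation}), has a unique solution determined by its initial data, the correspondence of trajectories is one-to-one. I expect the only delicate point to be the crux computation above, namely confirming that the graphon operator action reproduces the averaging operator $A_N\circ$ with the correct $\tfrac{1}{nN}$ factor supplied by $\mu(P_j)$ and that the image remains piece-wise constant; the remaining steps are routine linearity together with the uniqueness already guaranteed by the semigroup construction.
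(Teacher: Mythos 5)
Your proposal is correct and follows essentially the same route as the paper: the central computation, evaluating $[\SA\, M_G(x)]$ on each cell $P_i$ of the uniform partition and recovering $\tfrac{1}{nN}[A_N x]_i = [A_N\circ x]_i$ with the factor supplied by $\mu(P_j)$, is exactly the paper's equation establishing the operator equivalence, and your treatment of the $\alpha_N\BI$, $\beta_N\BI$ and input terms matches as well. The added framing via bijectivity of $M_G$, commutation with time differentiation, and uniqueness of solutions only makes explicit what the paper leaves implicit, so no substantive difference remains.
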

Since the system in \eqref{equ:step-function-dynamical-system} corresponds to a network system $S^N$ in \eqref{equ: averaging-dynamics}, we also refer to it as a \emph{network system}. We use $(\BSA;\BSB)$ to denote the network system in \eqref{equ:step-function-dynamical-system} where $\BSA= \alpha_N\BI +\SA$ and $\BSB= \beta_N\BI +\SB$. 


\subsection{Limits of Sequences of Network Systems}
A sequence of network systems with node averaging dynamics in \eqref{equ: averaging-dynamics} can be represented by the sequence of systems %
$\{(\alpha_N \BI +\SA;\beta_N \BI +\SB)\in \mathcal{G}^1_\mathcal{{AI}}  \times \mathcal{G}^1_\mathcal{{AI}} \}_{N=1}^{\infty}$ in \eqref{equ:step-function-dynamical-system}.

\begin{definition}[System Sequence Convergence] \label{def:system-convergence}
	A sequence of systems $\left\{(\alpha_N \BI +\SA;\beta_N \BI +\SB) \in \mathcal{G}^1_\mathcal{{AI}}  \times \mathcal{G}^1_\mathcal{{AI}} \right\}_{N=1}^{\infty}$ is convergent if the following two conditions hold
	\begin{enumerate}[1)]
		\item there exist $\alpha, \beta \in \BR$ such that 
    \begin{equation*}
      \lim_{N\rightarrow \infty} \alpha_N =\alpha ~~  \text{and} ~~  \lim_{N\rightarrow \infty} \beta_N =\beta;
    \end{equation*}
    \item there exist $\FA, \FB \in \ESO$, 
    \begin{equation*}
      \lim_{N\rightarrow \infty} \|\FA-\SA\|_{\textup{op}} =0 ~~ \text{and}~~  \lim_{N\rightarrow \infty} \|\FB-\SB\|_{\textup{op}}=0.
    \end{equation*}
	\end{enumerate}	
\end{definition}

The limit system is represented by $( \BA; \BB) $ where $\BA = \alpha \BI +\FA$ and 
 $ \BB= \beta \BI  + \FB$. %

Since any $\FW \in \ESO$ defines a self-adjoint and compact operator, the maximum absolute value of eigenvalues of $\FW$ equals to the operator norm
\cite[Theorem 12.31]{rudin1991functional}, that is, 
  $\|\FW\|_{\text{op}} = \max_\ell |\lambda_\ell|.
  $
Furthermore, the following inequalities between the cut norm and the operator norm hold \cite{janson2010graphons,parise2018graphon}:
\begin{equation}\label{equ:cut-norm-operator-norm}
  \|\FW\|_\Box \leq \|\FW\|_{\text{op}} \leq \sqrt{8\|\FW\|_\Box} . 
\end{equation}
By Lemma \ref{lem:operator-norm-and-L2-norm}, the following inequality also holds:
\begin{equation}\label{equ:op-l2}
   \|\FW\|_{\text{op}} \leq \|\FW\|_2.
\end{equation}

Based on \eqref{equ:cut-norm-operator-norm} and \eqref{equ:op-l2}, we obtain that the convergence of a sequence of graphons in $\|\cdot\|_2$ or  $\|\cdot\|_\Box$ implies its convergence in $\|\cdot\|_{\textup{op}}$.  Under certain extra conditions, the convergence of a sequence of graphons under the cut norm implies its convergence under the $L^2[0,1]$ norm \cite[Corollary 1.1]{szegedy2011limits}.

Let the graphon sequences $\{ \SA\}$ and $\{\SB\}$ be Cauchy sequences of step functions in $L^2[0,1]^2$. Due to the completeness of $L^2{[0,1]^2}$, the respective graphon limits $\mathbf{A}$ and $\mathbf{B}$ exist and these will then necessarily  be the limits in the cut metric (see Section \ref{subsec: compactness of graphon space} and \cite{lovasz2012large}). 

\section{The Limit Graphon System and Its Properties}
We follow \cite{bensoussan2007representation} and specialize both the Hilbert space of states $H$, and that of controls $U$ appearing there, to the space $L^2[0,1]$.
Let 
$L^2([0,T];L^2[0,1])$ denotes the Hilbert space of equivalence classes of strongly measurable (in the B\"ochner sense \cite[p.103]{showalter2013monotone}) mappings $[0,T] \rightarrow L^2[0,1]$ that are integrable with norm
$
\| \Ff \|_{L^2([0,T];L^2[0,1])} =\left[ \int_0^T \|\Ff(s)\|^2_2 ds\right]^{\frac{1}{2}}.
$

\begin{definition}[Graphon Systems]
  We formulate an infinite dimensional linear system, which we call a \emph{graphon system} $(\BA; \BB)$,  as follows: 
\begin{equation} \label{equ: infinite-system-model}
  \begin{aligned}
  &\dot{\Fx}_\Ft= \BA \Fx_\Ft + \BB \Fu_\Ft,  \quad \mathbf{x_0} \in L^2[0,1],   \end{aligned}
\end{equation}
where $\BA, \BB \in \mathcal{G}^1_\mathcal{{AI}}$, and are hence bounded operators on $L^2[0,1]$, $\mathbf{x_t} \in L^2[0,1]$ is the system state at time $t$ and $\mathbf{u_t} \in L^2[0,1]$ is the control input at time $t$.
\end{definition}
Notice that the network system in \eqref{equ:step-function-dynamical-system} is a special case of the graphon system in \eqref{equ: infinite-system-model}. 

A  solution {$\mathbf{x_{(\cdot)}} \in L^2([0,T];L^2[0,1])$  is a {\it (mild) solution} of  \eqref{equ: infinite-system-model} if 
$  \mathbf{x_t}=e^{(t-a)\BA}\mathbf{x_a}+\int_a^te^{(t-s)\BA}\BB\Fu_\Fs ds$
for all $a$ and $t$ in $[0,T]$, taken to be $a\leq t$ (see \cite{bensoussan2007representation}).
Let $C([0,T];L^2[0,1])$  denote the set of continuous mappings from $[0,T]$ to $L^2[0,1]$.
\begin{proposition}\label{prop: uniqueness of the solution to infinite dim system}
The graphon system  in  (\ref{equ: infinite-system-model}) has a unique  solution $\mathbf{x}\in C([0,T];L^2[0,1])$ for all $\mathbf{x_0} \in L^2[0,1]$ and all $\mathbf{u}\in L^2([0,T]; L^2[0,1])$.
\end{proposition}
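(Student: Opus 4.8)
\emph{Proof proposal.} The plan is to treat \eqref{equ: infinite-system-model} as a linear evolution equation whose generator $\BA = \alpha\BI + \FA$ is a \emph{bounded} operator on $L^2[0,1]$: it lies in $\mathcal{G}^1_\mathcal{{AI}}$, and $\FA\in\ESO$ satisfies $\|\FA\|_{\text{op}}\le\|\FA\|_2<\infty$ by \eqref{equ:op-l2}. Consequently $\BA$ generates the uniformly continuous semigroup $e^{t\BA}=\sum_{k\ge0}t^k\BA^k/k!$, and Lemma \ref{lem:exp-of-I+A} supplies the uniform bound $\sup_{t\in[0,T]}\|e^{t\BA}\|_{\text{op}}\le M:=e^{T(\alpha+\|\FA\|_{\text{op}})}<\infty$. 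The candidate solution is the variation-of-constants (mild) formula with base point $a=0$,
$$\mathbf{x_t}=e^{t\BA}\mathbf{x_0}+\int_0^t e^{(t-s)\BA}\BB\Fu_\Fs\,ds,$$
and I would verify in turn that (i) it is well defined, (ii) it lies in $C([0,T];L^2[0,1])$, (iii) it satisfies the mild-solution identity for every $a\le t$, and (iv) it is the only such solution.

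For well-definedness (i) I would check the integrand is Bochner integrable on $[0,t]$. Strong measurability of $s\mapsto e^{(t-s)\BA}\BB\Fu_\Fs$ follows from the strong continuity of the semigroup, the boundedness of $\BB\in\mathcal{G}^1_\mathcal{{AI}}$, and the strong measurability of $\Fu$; its norm is dominated by $M\|\BB\|_{\text{op}}\|\Fu_\Fs\|_2$, which is integrable over the finite interval because $\Fu\in L^2([0,T];L^2[0,1])\subset L^1([0,T];L^2[0,1])$ by Cauchy--Schwarz. The first term is bounded by $M\|\mathbf{x_0}\|_2$, so $\mathbf{x_t}\in L^2[0,1]$ for each $t$.

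The hard part will be the continuity claim (ii). The first term is continuous because $t\mapsto e^{t\BA}$ is uniformly continuous in operator norm. For the convolution term I would take $0\le t\le t'\le T$ and split
$$\int_0^{t'}e^{(t'-s)\BA}\BB\Fu_\Fs\,ds-\int_0^{t}e^{(t-s)\BA}\BB\Fu_\Fs\,ds = \bigl(e^{(t'-t)\BA}-\BI\bigr)\!\int_0^t e^{(t-s)\BA}\BB\Fu_\Fs\,ds + \int_t^{t'}e^{(t'-s)\BA}\BB\Fu_\Fs\,ds.$$
The first piece tends to $0$ as $t'-t\to0$ by uniform continuity of $e^{\cdot\BA}$ applied to a fixed bounded vector, and the second is bounded by $M\|\BB\|_{\text{op}}\int_t^{t'}\|\Fu_\Fs\|_2\,ds$, which tends to $0$ by absolute continuity of the Lebesgue integral. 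Together with the symmetric estimate for $t'<t$ this gives continuity on $[0,T]$.

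Finally, uniqueness (iv) is immediate from the definition of mild solution: taking $a=0$ forces any mild solution to coincide with the explicit formula above, so at most one exists. To close the existence argument I would also confirm the candidate satisfies the mild-solution identity at an arbitrary base point $a\in[0,T]$ with $a\le t$; substituting the formula for $\mathbf{x_a}$ into $e^{(t-a)\BA}\mathbf{x_a}+\int_a^t e^{(t-s)\BA}\BB\Fu_\Fs\,ds$, invoking the semigroup law $e^{(t-a)\BA}e^{(a-s)\BA}=e^{(t-s)\BA}$ and recombining the integrals over $[0,a]$ and $[a,t]$, recovers $\mathbf{x_t}$. This last step is routine and I would not belabor it.
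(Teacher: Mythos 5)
Your proposal is correct, and it takes a genuinely different route from the paper. The paper's proof is essentially a two-line verification-plus-citation: it notes that $\BA$, being a bounded operator, generates a uniformly continuous semigroup, that $\BB \in \mathcal{G}^1_\mathcal{{AI}}$ is a continuous linear map from the control space into the state space, and then invokes the general existence--uniqueness theorem for mild solutions in \cite[p.385]{bensoussan2007representation}, a result that holds for arbitrary $C_0$-semigroups, including those with unbounded generators. You instead reprove the relevant special case from first principles: Bochner integrability of $s\mapsto e^{(t-s)\BA}\BB\Fu_\Fs$ via $L^2([0,T];L^2[0,1])\subset L^1([0,T];L^2[0,1])$, continuity of the convolution term via the decomposition into $\bigl(e^{(t'-t)\BA}-\BI\bigr)$ acting on a vector of uniformly bounded norm plus a tail integral killed by absolute continuity, and uniqueness read off directly from the mild-solution identity at base point $a=0$ --- no Gr\"onwall argument is needed. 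What the paper's route buys is brevity and generality; what yours buys is transparency, since it exposes that only boundedness of $\BA$ (via $\|\FA\|_{\textup{op}}\le\|\FA\|_2$, making the semigroup uniformly rather than merely strongly continuous) is doing any work. Two minor presentational points: your splitting tacitly uses the semigroup identity $e^{(t'-s)\BA}=e^{(t'-t)\BA}e^{(t-s)\BA}$ together with pulling the bounded operator $e^{(t'-t)\BA}$ through the Bochner integral, which deserves a word of justification; and the vector $\int_0^t e^{(t-s)\BA}\BB\Fu_\Fs\,ds$ in your first piece is not fixed but varies with $t$, so you should say that its norm is uniformly bounded by $M\|\BB\|_{\textup{op}}\|\Fu\|_{L^1}$ and that operator-norm convergence of $e^{(t'-t)\BA}\to\BI$ therefore suffices. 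Neither point is a gap.
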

\begin{proof}
$\BA$ as a bounded linear operator generates a uniformly continuous semigroup and $\BB \in \mathcal{G}^1_\mathcal{{AI}}$ as a bounded linear operator forms a continuous linear mapping from control space $L^2[0,1]$ to the state space $L^2[0,1]$. Hence following \cite[p.385]{bensoussan2007representation}, the system (\ref{equ: infinite-system-model}) has a unique  solution $\mathbf{x}\in C([0,T];L^2[0,1])$ for all $\mathbf{x_0} \in L^2[0,1]$ and all $\mathbf{u}\in L^2([0,T]; L^2[0,1])$.
\end{proof}
A system ${(\BA;\BB)}$ is \emph{exactly controllable} on $[0,T]$ if for any  initial state $\mathbf{x}_0 \in L^2{[0,1]}$  and any target state $\mathbf{x}_f \in L^2{[0,1]}$, there exists a control $\Fu \in L^2([0,T];  L^2{[0,1]} )$ driving the system from $\mathbf{x}_0$ to $\mathbf{x}_f$, i.e.
$
	\Fx_T=\mathbf{x}_f
$ with $\Fx_T=e^{\BA T}\mathbf{x_0} + \int_0^T e^{\BA(T-t)}\BB\mathbf{u}_\Ft dt.$
A system $\mathbf{(\BA;\BB)}$ is \emph{approximately controllable} on $[0,T]$ if for any  initial state $\mathbf{x}_0 \in L^2{[0,1]}$, any target state $\mathbf{x}_f \in L^2{[0,1]}$ and any $\varepsilon >0$, there exists a control $\Fu \in L^2([0,T];  L^2{[0,1]})$ which drives the system state from $\mathbf{x}_0$ into the $L^2[0,1]$ $\varepsilon$-neighborhood of $\mathbf{x}_f$, i.e., $
  \|\Fx_T- \mathbf{x}_f \|_2 \leq \varepsilon.
$

 The \emph{controllability Gramian operator} $\mathbb{W}_T:L^2[0,1] \rightarrow L^2[0,1]$ is defined as 
\begin{equation}
	\begin{aligned}
		{\BW}_{T}:=&\int_{0}^{T} e^{\BA(t-s)}\BB \BB^\TRANS e^{\BA^\TRANS(t-s)}ds, \quad T>0.
	\end{aligned}
\end{equation}
A necessary and sufficient condition for exact controllability on $[0,T]$ is the uniform positive definiteness of $\BW_T$: 
\begin{equation}
  \langle \BW_Th,h\rangle\geq c_T\|h\|^2
\end{equation}
for all $h\in L^2[0,1]$, where  $c_T>0$ and $\|\cdot\|$ is  the $L^2[0,1]$ norm. The positive definiteness of the controllability Gramian operator $\BW_T$ as a kernel is equivalent to the approximate controllability of the corresponding system (see \cite{bensoussan2007representation,curtain1995introduction}).

Define the \emph{kernel space} (or \emph{null space})  of a linear operator $\BT$ on $L^2[0,1]$ as:
$
	\text{ker}(\BT) := \{x \in L^2[0,1]: \BT x=0 \}.
$
The \emph{spectrum} $\sigma(\BT)$ of a bounded linear operator $\BT$ on $L^2[0,1]$ is the set of all (complex or real) scalars  $\lambda$ such that $\BT-\lambda \BI$ is not invertible.  Thus $\lambda \in \sigma(\BT)$ if and only if at least one of the following two statements is true: 
\begin{enumerate}[(i)]
	\item The range of $\BT -\lambda \BI $ is not all of $L^2[0,1]$, i.e., $\BT-\lambda \BI$ is not onto.
	\item $\BT -\lambda \BI$ is not one-to-one.
\end{enumerate}
If (ii) holds, $\lambda$ is said to be an \emph{eigenvalue} of $\BT$; the corresponding eigenspace is $\text{ker}(\BT- \lambda \BI)$; each $x \in \text{ker}(\BT- \lambda \BI)$ (except $x=0$) is an \emph{eigenvector} of $\BT$; it satisfies the equation $\BT x= \lambda x$. See \cite{rudin1991functional}.

\begin{theorem}[Appendix \ref{sec:Proofs for Exact Controllability}] \label{thm:sufficient-condition-for-exact-controllability}%
			Let $\BA$ be an element in $\mathcal{G}^1_\mathcal{{AI}}$ and let $\BB$ be a  bounded  linear operator on $L^2[0,1]$.
			The linear system $(\BA; \BB)$ is exactly controllable on a finite time horizon $[0,T]$ if all the values in the spectrum of $\BB\BB^T$ are lower bounded by a strictly positive constant.
		\end{theorem}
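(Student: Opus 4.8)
The plan is to establish exact controllability by verifying the necessary and sufficient condition recorded above, namely the uniform positive definiteness $\langle \BW_T h, h\rangle \geq c_T\|h\|^2$ of the controllability Gramian $\BW_T$ for some $c_T > 0$. The strategy combines the spectral hypothesis on $\BB\BB^\TRANS$, which controls the integrand of the Gramian from below, with an invertibility argument for the semigroup $e^{\BA t}$ that prevents the propagated state from collapsing to zero.

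First I would rewrite the quadratic form of $\BW_T$. Every graphon is symmetric, so $\FA$ is self-adjoint and $\BA = \alpha\BI + \FA$ satisfies $\BA^\TRANS = \BA$; hence $e^{\BA^\TRANS(T-s)} = e^{\BA(T-s)}$ and each exponential factor is self-adjoint. Moving one factor across the inner product yields
\begin{equation*}
  \langle \BW_T h, h\rangle = \int_0^T \left\langle \BB\BB^\TRANS e^{\BA(T-s)}h,\, e^{\BA(T-s)}h\right\rangle ds = \int_0^T \left\|\BB^\TRANS e^{\BA(T-s)}h\right\|_2^2\, ds.
\end{equation*}
Since $\BB\BB^\TRANS$ is bounded, self-adjoint and positive, the bottom of its spectrum equals $\inf_{\|g\|_2=1}\langle \BB\BB^\TRANS g, g\rangle$; the hypothesis that every spectral value is at least $c>0$ therefore gives $\|\BB^\TRANS g\|_2^2 = \langle \BB\BB^\TRANS g, g\rangle \geq c\|g\|_2^2$ for all $g\in L^2[0,1]$. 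Applying this with $g = e^{\BA(T-s)}h$ under the integral produces the intermediate bound $\langle \BW_T h, h\rangle \geq c\int_0^T \|e^{\BA(T-s)}h\|_2^2\, ds$.

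The crux of the argument, and the step I expect to be the main obstacle, is a uniform lower bound on $\|e^{\BA t}h\|_2$: the Gramian estimate would be vacuous if the semigroup could contract $h$ toward zero. Here I would use that $e^{\BA t}$ is boundedly invertible with inverse $e^{-\BA t}$, so $\|h\|_2 \leq \|e^{-\BA t}\|_{\textup{op}}\|e^{\BA t}h\|_2$. Because $-\BA = (-\alpha)\BI + (-\FA)$ with $-\FA$ again a graphon in $\ESO$, Lemma~\ref{lem:exp-of-I+A} applied to $-\BA$ gives $\|e^{-\BA t}\|_{\textup{op}} \leq e^{t(\|\FA\|_{\textup{op}} - \alpha)}$, whence $\|e^{\BA t}h\|_2 \geq e^{t\gamma}\|h\|_2$ with $\gamma := \alpha - \|\FA\|_{\textup{op}}$.

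Finally I would substitute $t = T-s$ and integrate: $\int_0^T e^{2(T-s)\gamma}\, ds = \int_0^T e^{2\tau\gamma}\, d\tau$ is strictly positive for every sign of $\gamma$ (it equals $\tfrac{e^{2T\gamma}-1}{2\gamma}$ when $\gamma\neq 0$ and $T$ when $\gamma = 0$). Setting $c_T := c\int_0^T e^{2\tau\gamma}\, d\tau > 0$ gives $\langle \BW_T h, h\rangle \geq c_T\|h\|_2^2$ for all $h\in L^2[0,1]$, which is exactly the uniform positive definiteness that characterizes exact controllability on $[0,T]$.
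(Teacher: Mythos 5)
Your proof is correct, and it shares the paper's overall skeleton: both reduce exact controllability to the coercivity $\langle \BW_T h, h\rangle \geq c_T\|h\|_2^2$ of the Gramian, both convert the spectral hypothesis on $\BB\BB^\TRANS$ into the quadratic-form bound $\langle \BB\BB^\TRANS g, g\rangle \geq c\|g\|_2^2$ (the paper invokes \cite[Theorem 12.12]{rudin1991functional} for this), and both then need the pointwise lower bound $\|e^{\BA t}h\|_2 \geq e^{(\alpha - \|\FA\|_{\textup{op}})t}\|h\|_2$. Where you genuinely differ is at this last step, which you correctly identified as the crux. The paper argues spectrally: since $\FA$ is self-adjoint with eigenvalues in $[-c_1,c_1]$, it writes $\|e^{\FA t}\|_{\textup{op}} \geq e^{-c_1 t}$ and asserts that $e^{\BA t}$ is uniformly positive definite; as written, a lower bound on the operator norm only controls the best-case vector, and the rigorous content behind the paper's assertion is the spectral-mapping fact that the self-adjoint operator $e^{\FA t}$ has spectrum bounded below by $e^{-c_1 t}$. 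Your route --- $e^{\BA t}$ belongs to a uniformly continuous group, hence is boundedly invertible with inverse $e^{-\BA t}$, so $\|h\|_2 \leq \|e^{-\BA t}\|_{\textup{op}}\,\|e^{\BA t}h\|_2$, with $\|e^{-\BA t}\|_{\textup{op}} \leq e^{t(\|\FA\|_{\textup{op}}-\alpha)}$ from Lemma~\ref{lem:exp-of-I+A} applied to $-\BA = (-\alpha)\BI + (-\FA)$ (legitimate, since $-\FA \in \ESO$) --- sidesteps self-adjointness of the exponential entirely and would work verbatim for a non-symmetric bounded generator, with $e^{\BA^\TRANS(T-s)}$ kept in place of $e^{\BA(T-s)}$; your use of $\BA^\TRANS = \BA$ is only cosmetic. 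A further small advantage: your final constant $c\int_0^T e^{2\gamma\tau}\,d\tau$ with $\gamma = \alpha - \|\FA\|_{\textup{op}}$ is a valid strictly positive lower bound for either sign of $\gamma$, whereas the paper's closing bound $cT\,(e^{(\alpha-c_1)T})^2$ implicitly assumes $\alpha \leq c_1$ so that $e^{(\alpha-c_1)t}$ is minimized at $t=T$ (harmless in the paper, since $c_1$ may always be enlarged, but your sign-robust handling of the time integral is the cleaner formulation).
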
		
\begin{proposition}[Appendix \ref{sec:Proofs for Exact Controllability}] \label{prop:graphon-system-with-graphon-input-not-exact-controllable} %
			Let $\FA$ and $\FB$ be graphons in $\ESO$. Then  the linear system $(\FA; \FB)$ is not exactly controllable on any finite time horizon $[0,T]$. 
\end{proposition}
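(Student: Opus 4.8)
The plan is to reduce exact controllability to a spectral statement about the controllability Gramian $\BW_T$ and then exploit the compactness that is forced upon $\BW_T$ by the fact that the input operator $\FB$ is a graphon. Recall from the excerpt that uniform positive definiteness of $\BW_T$, i.e. the existence of $c_T>0$ with $\langle \BW_T h,h\rangle \geq c_T\|h\|_2^2$ for all $h$, is a \emph{necessary and sufficient} condition for exact controllability on $[0,T]$. So it suffices to show that $\BW_T$ fails this bound. First I would record the structural facts: since $\FB\in\ESO$ it defines a self-adjoint compact operator on $L^2[0,1]$, whence $\FB^\TRANS=\FB$ and $\FB\FB^\TRANS=\FB^2$ is compact (the compact operators form a two-sided ideal in $\mathcal{L}(L^2[0,1])$). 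Moreover $\FA\in\ESO$ is bounded, so each $e^{\FA(t-s)}$ and $e^{\FA^\TRANS(t-s)}$ is a bounded operator. Hence, for each fixed $s$, the integrand $e^{\FA(t-s)}\FB\FB^\TRANS e^{\FA^\TRANS(t-s)}$ is a compact operator sandwiched between bounded operators, and is therefore itself compact.

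Next I would show that the integral inherits compactness. Because $\FA$ is bounded, the map $s\mapsto e^{\FA(t-s)}$ is continuous in the operator norm, so the integrand is a continuous, compact-operator-valued function on the finite interval $[0,T]$. The Gramian $\BW_T$ is then an operator-norm limit of Riemann sums, each of which is a finite linear combination of compact operators and hence compact; since the compact operators constitute a \emph{closed} subspace of $\mathcal{L}(L^2[0,1])$, the limit $\BW_T$ is compact. I would also note that $\BW_T$ is self-adjoint and positive semidefinite, being a Gramian of the stated form.

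Finally I would conclude by the spectral theory of compact self-adjoint operators. A compact, positive semidefinite, self-adjoint operator on the infinite-dimensional Hilbert space $L^2[0,1]$ has a spectrum consisting of nonnegative eigenvalues whose only possible accumulation point is $0$; consequently $\inf_{\|h\|_2=1}\langle \BW_T h,h\rangle=0$. Therefore no strictly positive constant $c_T$ can satisfy $\langle \BW_T h,h\rangle \geq c_T\|h\|_2^2$ for all $h\in L^2[0,1]$, so $\BW_T$ is not uniformly positive definite, and by the necessary-and-sufficient criterion the system $(\FA;\FB)$ is not exactly controllable on any finite horizon $[0,T]$.

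The hard part will be the rigorous justification that $\BW_T$ is compact, namely verifying operator-norm continuity of the integrand in $s$ and arguing that the operator-valued integral of a compact-valued continuous function remains compact via the closedness of the ideal of compact operators; once this is in place, the failure of the lower bound is a routine consequence of the spectral decomposition of $\BW_T$.
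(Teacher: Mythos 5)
Your proposal is correct, and it follows the same overall strategy as the paper --- show the Gramian $\BW_T$ is compact and self-adjoint, invoke the spectral theory of compact operators to conclude that $0$ is in (or is an accumulation point of) the spectrum, and hence that uniform positive definiteness fails --- but it establishes the compactness of $\BW_T$ by a genuinely different mechanism. The paper works at the level of kernels: using $\|\cdot\|_{\textup{op}}\leq\|\cdot\|_2$ and the bound $\|e^{\FA t}\|_{\textup{op}}\leq e^{c_1 t}$, it estimates $\|\BW_T\|_2\leq \int_0^T\|e^{\FA t}\FB\|_2^2\,dt\leq T(e^{c_1T}c_2)^2<\infty$, so that $\BW_T$ is itself an $L^2[0,1]^2$ kernel, i.e.\ a Hilbert--Schmidt operator, which is a strictly stronger conclusion than compactness. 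You instead argue abstractly: $\FB\FB^\TRANS=\FB^2$ is compact, each integrand $e^{\FA(T-s)}\FB\FB^\TRANS e^{\FA^\TRANS(T-s)}$ is compact because the compact operators form a two-sided ideal, the integrand is operator-norm continuous in $s$ (the semigroup is uniformly continuous since $\FA$ is bounded), and the norm-closedness of the compacts makes the Riemann-sum limit $\BW_T$ compact. Your route buys generality: it goes through verbatim whenever $\FB$ is any compact operator and $\FA$ any bounded generator, with no need for $\FB$ to be a kernel operator, and it sidesteps the (mild) kernel-measurability/Fubini bookkeeping implicit in the paper's claim that the integral of the kernels is the kernel of the integral. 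The paper's route buys an explicit quantitative bound and the stronger Hilbert--Schmidt conclusion essentially for free from inequalities already proved (Lemma \ref{lem:operator-norm-and-L2-norm}). One small polish point at your final step: rather than inferring $\inf_{\|h\|_2=1}\langle\BW_T h,h\rangle=0$ informally from eigenvalue accumulation, you can note directly that $0\in\sigma(\BW_T)$ for any compact operator on an infinite-dimensional Hilbert space (a uniformly positive definite operator would be boundedly invertible, which compactness forbids), and for self-adjoint $\BW_T$ the infimum of the quadratic form equals $\min\sigma(\BW_T)=0$; this closes the argument without appealing to finite multiplicities.
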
	
	
The results in Theorem \ref{thm:sufficient-condition-for-exact-controllability} and Proposition \ref{prop:graphon-system-with-graphon-input-not-exact-controllable} generalize to the case where  the underlying graphons lie in any uniformly bounded subset of $\ES$, i.e., any set of symmetric measurable functions $\FW:[0,1]^2\rightarrow I$ where $I$ is a closed bounded interval in $\BR$.

\section{Graphon State-to-state Control of Network Systems}
\subsection{Approximation of $L^2[0,1]$ Functions}
\begin{theorem}[{{\cite[Theorem 13.23]{hewitt1965real}}}]  \label{theorem: hewitt1965real}
	Let $\mu$ be any measure on $\BR$ and $\mathcal{B}_{\mu}$ be the $\sigma$-algebra of $\mu$-measurable sets, and let $1\leq p<\infty$. Then piece-wise constant functions on $\BR$ form a dense subset of $L^p(\BR, \mathcal{B}_{\mu},\mu)$.
\end{theorem}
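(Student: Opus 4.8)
The plan is to prove density through the standard three-step reduction, exploiting that piece-wise constant functions are exactly the finite $\BR$-linear combinations of indicators of bounded intervals. First I would reduce the claim to approximating a single indicator $\mathds{1}_E$ with $\mu(E)<\infty$. Writing an arbitrary $f\in L^p(\BR,\mathcal{B}_\mu,\mu)$ as $f=f^+-f^-$ and approximating each nonnegative part separately, it suffices to handle nonnegative $f$. For such $f$ the classical increasing sequence of simple functions $s_n=\sum_i c_i^{(n)}\mathds{1}_{E_i^{(n)}}$ with $0\le s_n\uparrow f$ pointwise gives $\|f-s_n\|_p\to 0$ by dominated convergence (dominant $f^p\in L^1$), and each level set $E_i^{(n)}$ has finite $\mu$-measure because $c_i^{(n)}>0$ and $f\in L^p$. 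Thus the problem collapses to showing that $\mathds{1}_E$ is $L^p$-approximable by PWC functions whenever $\mu(E)<\infty$.

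For the indicator step the key identity is $\|\mathds{1}_E-\mathds{1}_U\|_p^p=\mu(E\triangle U)$, so it is enough to find, for each $\varepsilon>0$, a finite disjoint union of bounded intervals $U$ with $\mu(E\triangle U)<\varepsilon$; the indicator of such a $U$ is a PWC function in the sense of the paper. This is the measure-theoretic heart of the argument and the step I expect to be the main obstacle, since $\mu$ is an arbitrary measure on $\BR$ rather than Lebesgue measure, so no explicit geometric construction is available. I would first truncate: because $\mu(E)<\infty$, dominated convergence gives $\mu(E\setminus[-R,R])\to 0$ as $R\to\infty$, so it suffices to approximate $E\cap[-R,R]$ for a fixed large $R$. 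Working inside the window $[-R,R]$, the finite disjoint unions of subintervals form an algebra $\mathcal{A}_R$ that generates the trace $\sigma$-algebra and is closed under relative complement. The family of sets approximable in the pseudometric $d(E,F)=\mu(E\triangle F)$ by members of $\mathcal{A}_R$ is closed under complements and countable unions (the latter using finiteness of $\mu$ on the window to reduce infinite unions to finite ones), and it contains $\mathcal{A}_R$; hence by the monotone class / $\sigma$-algebra generation theorem it contains every trace-measurable subset of $[-R,R]$, and completeness of $\mathcal{B}_\mu$ absorbs the null-set discrepancy between $\mu$-measurable and Borel sets.

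Finally I would assemble the pieces: chaining the simple-function approximation with the interval approximation of each finite-measure level set and applying the triangle inequality in $L^p$ produces a PWC function within any prescribed $L^p$ distance of $f$. The only genuinely non-routine ingredient is the generating-algebra approximation of the second paragraph, which in Hewitt--Stromberg's development would be supplied by the earlier set-approximation theorems; the remaining steps are bookkeeping with the triangle inequality, the decomposition $f=f^+-f^-$, and the truncation to a bounded window.
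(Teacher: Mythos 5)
The paper does not actually prove this statement---it is quoted verbatim as Theorem 13.23 of Hewitt--Stromberg---so your attempt must be judged against the standard argument, and your overall architecture (reduction to nonnegative $f$, simple functions $s_n \uparrow f$ with dominated convergence, Chebyshev to get finite-measure level sets, the identity $\|\mathds{1}_E-\mathds{1}_U\|_p^p=\mu(E \triangle U)$, and the generated-algebra approximation lemma) is the right skeleton. The genuine gap sits exactly at the step you yourself flag as the heart. Your monotone-class closure argument requires $\mu$ to be \emph{finite on the window} $[-R,R]$, but the theorem as stated in the paper says ``any measure on $\BR$,'' and an arbitrary measure need not be finite on bounded sets; once $\mu([-R,R])=\infty$ your closure-under-countable-unions step collapses, and nothing else in the argument produces $U$ with $\mu(E\triangle U)$ small. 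In fact, read literally, the statement is \emph{false} in that generality: take $\mu=\lambda+\sum_{q\in \mathds{Q}}\delta_q$ (Lebesgue plus counting measure on the rationals, which is even $\sigma$-finite) and $E=(0,1)\setminus \mathds{Q}$, so $\mu(E)=1$ and $\mathds{1}_E\in L^p(\mu)$. Any piece-wise constant $g$ with $\|\mathds{1}_E-g\|_p<1$ has finite norm by the triangle inequality, yet every nondegenerate interval contains infinitely many rationals and hence has infinite $\mu$-mass, forcing $g$ to vanish off a finite set $F$; then $\|\mathds{1}_E-g\|_p^p\geq \lambda(E\setminus F)=1$, a contradiction. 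The obstruction is not $\sigma$-finiteness but local finiteness: the generating algebra of intervals must itself consist of sets of finite measure for the approximation lemma to have any chance.

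What is missing, therefore, is the standing hypothesis that is implicit in Hewitt--Stromberg and silently dropped in the paper's paraphrase: their measures on $\BR$ are the regular Carath\'eodory measures arising from outer measures (equivalently, from nonnegative linear functionals), which are in particular finite on bounded intervals, and it is precisely their earlier regularity/set-approximation theorems that supply $\mu(E\triangle U)<\varepsilon$---the one ingredient you correctly identified as non-routine. Your proof becomes correct essentially verbatim once you add ``$\mu$ finite on bounded intervals'' (the window is then a finite measure space, your algebra $\mathcal{A}_R$ argument goes through, and regularity also justifies your remark that a $\mu$-measurable set of finite measure differs from a Borel set by a null set). For the only instance the paper actually uses---Lebesgue measure on $[0,1]$ in the approximation of $L^2[0,1]$ functions by elements of $L^2_{pwc}[0,1]$---your argument is complete as written.
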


\begin{proposition}\label{prop:contraction-L2-function}
Let $\Fv \in L^2[0,1]$ be approximated by $\Fv^{\mathbf{[N]}} \in L^2_{pwc}[0,1]$ as follows:  for all $x \in Q_i$ and for all $i\in\{1,\ldots,nN\}$, 
\begin{equation} \label{equ:L2function-approx}
 \Fv^{\mathbf{[N]}}(x) = \frac1{\mu(Q_i)} \int_{Q_i} \Fv(\alpha) d\alpha
\end{equation}
with the partition $\{Q_1, Q_2,\ldots,  Q_{nN}\}$  of $[0,1]$ where $\mu(Q_i)$ denotes the measure of $Q_i$.
Then 
  $\|\Fv^{\mathbf{[N]}}\|_2 \leq \|\Fv\|_2.$
\end{proposition}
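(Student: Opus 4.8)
The plan is to recognize that the cell-averaging map $\Fv \mapsto \Fv^{\mathbf{[N]}}$ defined by \eqref{equ:L2function-approx} is precisely the orthogonal projection of $\Fv$ onto the finite-dimensional subspace $L^2_{pwc}[0,1]$, and that orthogonal projections never increase the $L^2$ norm. Concretely, I would first note that the indicator functions $\{\mathds{1}_{Q_1},\ldots,\mathds{1}_{Q_{nN}}\}$ are mutually orthogonal in $L^2[0,1]$, because the cells $Q_i$ are disjoint, and that $\|\mathds{1}_{Q_i}\|_2^2 = \mu(Q_i)$. Since $\langle \Fv, \mathds{1}_{Q_i}\rangle = \int_{Q_i}\Fv\,d\alpha$, the defining formula \eqref{equ:L2function-approx} can be rewritten as
\begin{equation*}
  \Fv^{\mathbf{[N]}} = \sum_{i=1}^{nN} \frac{\langle \Fv, \mathds{1}_{Q_i}\rangle}{\|\mathds{1}_{Q_i}\|_2^2}\, \mathds{1}_{Q_i},
\end{equation*}
which is exactly the orthogonal projection of $\Fv$ onto $\mathrm{span}\{\mathds{1}_{Q_i}\}_{i=1}^{nN}$.

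With this identification the conclusion follows from the Pythagorean identity: writing $\Fv = \Fv^{\mathbf{[N]}} + (\Fv - \Fv^{\mathbf{[N]}})$, where the residual $\Fv - \Fv^{\mathbf{[N]}}$ is orthogonal to the subspace and hence to $\Fv^{\mathbf{[N]}}$, one obtains $\|\Fv\|_2^2 = \|\Fv^{\mathbf{[N]}}\|_2^2 + \|\Fv - \Fv^{\mathbf{[N]}}\|_2^2 \geq \|\Fv^{\mathbf{[N]}}\|_2^2$, which is the claim. Alternatively, and more elementarily, I would argue cell by cell: on each $Q_i$ the value is the constant $c_i := \mu(Q_i)^{-1}\int_{Q_i}\Fv\,d\alpha$, and Cauchy--Schwarz (equivalently Jensen's inequality for $t\mapsto t^2$ against the normalized restriction $\mu(Q_i)^{-1}\mu|_{Q_i}$) gives $c_i^2 \leq \mu(Q_i)^{-1}\int_{Q_i}\Fv^2\,d\alpha$. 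Multiplying by $\mu(Q_i)$ and summing yields $\|\Fv^{\mathbf{[N]}}\|_2^2 = \sum_i \mu(Q_i)c_i^2 \leq \sum_i \int_{Q_i}\Fv^2\,d\alpha = \|\Fv\|_2^2$.

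There is no substantial obstacle here; the result is standard. The only points requiring a word of care are the standing assumption that each cell has positive measure, so that the averages in \eqref{equ:L2function-approx} are well defined, and the fact that the $Q_i$ partition $[0,1]$, so that $\sum_i \int_{Q_i}\Fv^2 = \int_0^1 \Fv^2$; both are immediate from the setup. I would present the Cauchy--Schwarz computation as the main argument since it is completely self-contained, while noting the projection viewpoint as the conceptual reason the inequality holds, and indeed as the reason it becomes an equality precisely when $\Fv$ is already piece-wise constant on the given partition.
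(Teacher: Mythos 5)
Your main argument---the cell-by-cell Cauchy--Schwarz (Jensen) computation, multiplying by $\mu(Q_i)$ and summing over the partition---is precisely the paper's own proof, and it is correct. The orthogonal-projection/Pythagorean identification you add is a sound conceptual gloss (and correctly explains the equality case), but the substance of your proof coincides with the paper's.
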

\begin{proof} Applying the Cauchy-Schwarz inequality yields
  \begin{equation}
  \begin{aligned}
      \|\Fv\|_2^2 &= \int_0^1 \Fv^2(x)dx = \sum_{i=1}^{nN}\int_{Q_i} \Fv^2(x)dx\\
      & \geq \sum_{i=1}^{nN} \frac1{\mu(Q_i)} \left[\int_{Q_i}\Fv(x)dx\right]^2\\
       & = \sum_{i=1}^{nN}  {\mu(Q_i)} \left[\frac1{\mu(Q_i)}\int_{Q_i}\Fv(x)dx\right]^2
       = \|\Fv^{\mathbf{[N]}}\|_2^2. 
  \end{aligned}
  \end{equation}~
\end{proof}

In  this paper we wish to approximate any control input $\mathbf{u}_\Ft \in L^2[0,1],  0 \leq t \leq T$,  through a piece-wise constant function in $ L^2[0,1]$ denoted by  $\Sut$. Specifically, the approximation of an input $\mathbf{u}_{\Ft}$ via the function $\Sut$ with the partition $Q=\{Q_1,Q_2,\cdots, Q_{nN}\}$ of $[0,1]$ will be specified as follows:  for all $Q_i, i \in \{1,2,\dots, nN\}$, 
\begin{equation} \label{equ: input-approximation}
				 {\Fu^\FN_{\Ft}}{(\alpha)}=\frac{1}{\mu(Q_i)}\int_{Q_i}\mathbf{u}_\Ft(\beta)d\beta,\quad \forall \alpha \in Q_i,
\end{equation}
where $\mu(Q_i)$ denotes the measure of $Q_i$.

\subsection{Limit Control for Network Systems}

\begin{theorem}[Appendix \ref{sec:Proofs for State-to-state Graphon Control}]\label{thm:mm-graphon-main}
  Consider the problem of driving the systems $(\BSA;\BSB)$ in \eqref{equ:step-function-dynamical-system} and $(\BA;\BB)$ in \eqref{equ: infinite-system-model} from the origin to some target state. Let $\BSA= \alpha_N \BI + \SA$, $\BSB= \beta_N \BI + \SB$, $\BA = \alpha \BI + \FA$ and $\BB = \beta \BI + \FB$.  Let ${\Fx_\FT(\Fu)}$ represent the terminal state of $(\BA;\BB)$ under control $\Fu$ and $\Fx^\FN_\FT(\Su)$ represent the terminal state of $(\BSA;\BSB)$ under control $\Fu^{[\FN]}$.
  Then for any $T>0$, 
there exists a control ${\Su}$ for $(\BSA; \BSB)$ approximating the control $\Fu$ for $(\BA;\BB)$ such that 
  \begin{equation} \label{equ:MM-terminal-difference}
    \begin{aligned}
   & \|{\Fx_\FT(\Fu)} - {\Fx^\FN_\FT(\Fu^{[\FN]})}\|_2 \\
   &\leq \left|\alpha- \alpha_N\right| (|\beta| + \|\FB\|_\textup{op}) \\
   & \qquad \qquad \qquad\int_0^T (T-t)e^{(L_\alpha+\|\FA\|_\textup{op})(T-t)}\left\|\Fu_\Ft\right\|_2 dt \\
     &+  \|\FA_{\Delta}^{\mathbf{N}}\|_\textup{op}(|\beta| + \|\FB\|_\textup{op}) \int_0^T e^{(\alpha_N+1)(T-t)} (T-t) \|\Fu_\Ft\|_2 dt\\
         &+  (|\beta-\beta_N|+ \|{\FB_{\Delta}^\FN}\|_\textup{op})\\
         & \qquad \qquad \qquad  \int_0^T e^{\left(\alpha_N + \left\|\SA\right\|_\textup{op}\right)(T-t)} \|\Fu_\Ft\|_2 dt \\
     &+ |\beta_N|\int_0^T e^{\left(\alpha_N + \|\SA\|_\textup{op}\right)(T-t)} \left\|\Fu_\Ft - \Sut\right\|_2 dt,
  \end{aligned}
  \end{equation}
   where $L_\alpha= \max\{|\alpha_N|,|\alpha|\}$,  ${\FA_{\Delta}^\FN}=\FA-\SA$,  ${\FB_{\Delta}^\FN}=\FB-\SB$,  and the control approximation is given in the following:  
  \begin{equation}\label{equ:mm-control-approximation}
    {\Sut}{(\alpha)}=nN\int_{P_i}\Fu_\Ft(\beta)d\beta ,
  \end{equation}
             for all $ \alpha \in P_i$, $t\in[0,T]$,
    with the uniform partition $P^{nN} =\{P_1, \cdots, P_{nN}\}$.
Furthermore, if a sequence of network systems $\{(\BSA; \BSB)\}_{N=1}^\infty$ converges to a graphon system $(\BA;\BB)$ as in Definition \ref{def:system-convergence}, then
  for any $\varepsilon >0$  there exists $N_\varepsilon>0$ such that  each $N \ge N_\varepsilon$, 
   \begin{equation} \label{equ:mm-terminal-convergence}
     \left\|{\Fx_\FT(\Fu)} - {\Fx^\FN_\FT(\Fu^{[\FN]})}\right\|_2 < \varepsilon.
   \end{equation}

 \end{theorem}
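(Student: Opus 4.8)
The plan is to use that both systems are driven from the origin, so by the mild-solution formula the two terminal states are
\begin{equation*}
  \Fx_\FT(\Fu) = \int_0^T e^{\BA(T-t)}\BB\,\Fu_\Ft\, dt, \qquad \Fx^\FN_\FT(\Fu^{[\FN]}) = \int_0^T e^{\BSA(T-t)}\BSB\,\Sut\, dt .
\end{equation*}
Subtracting and pushing the norm inside the integral (triangle inequality), it suffices to bound $\bigl\|e^{\BA(T-t)}\BB\Fu_\Ft - e^{\BSA(T-t)}\BSB\Sut\bigr\|_2$ for each $t$ and then integrate over $[0,T]$. I would insert two telescoping terms to split this integrand as
\begin{equation*}
  \bigl(e^{\BA(T-t)} - e^{\BSA(T-t)}\bigr)\BB\Fu_\Ft + e^{\BSA(T-t)}(\BB-\BSB)\Fu_\Ft + e^{\BSA(T-t)}\BSB(\Fu_\Ft - \Sut),
\end{equation*}
corresponding respectively to the mismatch in the semigroup generator, in the input operator, and in the control.

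For the first contribution I would apply Theorem~\ref{theorem: convergence in operator}, using the bound \eqref{equ:exp-of-A+I} with running time $T-t$ and the fixed vector $\BB\Fu_\Ft$; together with $\|\BB\Fu_\Ft\|_2 \le (|\beta|+\|\FB\|_\textup{op})\|\Fu_\Ft\|_2$ this yields the first two terms of \eqref{equ:MM-terminal-difference} (the $\alpha$-mismatch part and the $\FA$-mismatch part, respectively). For the second contribution I would bound the semigroup by Lemma~\ref{lem:exp-of-I+A}, giving the factor $e^{(\alpha_N+\|\SA\|_\textup{op})(T-t)}$, and estimate $\|(\BB-\BSB)\Fu_\Ft\|_2 \le (|\beta-\beta_N|+\|\FB_{\Delta}^\FN\|_\textup{op})\|\Fu_\Ft\|_2$, producing the third term.

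The key step is the third contribution, where the elegant point is that the step-function part of $\BSB$ has no effect. Because $\Sut$ is, by \eqref{equ:mm-control-approximation}, exactly the block-average of $\Fu_\Ft$ on the uniform partition $P^{nN}$, the deviation $\Fu_\Ft - \Sut$ integrates to zero over every cell $P_j$. Since $\SB$ is constant on each product cell $P_i\times P_j$, its action annihilates this zero-mean deviation, i.e.\ $\SB(\Fu_\Ft-\Sut)=0$, so $\BSB(\Fu_\Ft-\Sut)=\beta_N(\Fu_\Ft-\Sut)$ and only the scalar $\beta_N$ (not $\|\SB\|_\textup{op}$) survives. Combined with Lemma~\ref{lem:exp-of-I+A} this gives the fourth term; integrating all three bounds over $[0,T]$ and summing yields \eqref{equ:MM-terminal-difference}.

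For the convergence claim \eqref{equ:mm-terminal-convergence} I would invoke Definition~\ref{def:system-convergence}: the prefactors $|\alpha-\alpha_N|$, $\|\FA_{\Delta}^\FN\|_\textup{op}$ and $|\beta-\beta_N|+\|\FB_{\Delta}^\FN\|_\textup{op}$ multiplying the first three terms each tend to $0$, while the accompanying integrals remain uniformly bounded, since graphon operator norms are at most $1$ and $\alpha_N$ converges, so all exponents are bounded uniformly in $N$ and $t$. Hence the first three terms vanish. The remaining obstacle, which I expect to be the main analytic point, is the fourth term: one must show $\int_0^T\|\Fu_\Ft-\Sut\|_2\,dt \to 0$. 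Here the mesh $1/(nN)$ of the uniform partition tends to $0$, so by density of piecewise-constant functions (Theorem~\ref{theorem: hewitt1965real}), together with the contraction estimate $\|\Sut\|_2\le\|\Fu_\Ft\|_2$ of Proposition~\ref{prop:contraction-L2-function}, the block-average projection satisfies $\|\Fu_\Ft-\Sut\|_2\to 0$ for a.e.\ $t$; since $\Fu\in L^2([0,T];L^2[0,1])$ gives $\|\Fu_\Ft\|_2\in L^1[0,T]$, the integrand is dominated by $2\|\Fu_\Ft\|_2$, and dominated convergence completes the argument.
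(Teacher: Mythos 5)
Your proposal is correct and follows essentially the same route as the paper: the same mild-solution subtraction and telescoping, the same application of Theorem~\ref{theorem: convergence in operator} (via \eqref{equ:exp-of-A+I}) and Lemma~\ref{lem:exp-of-I+A}, and the same key observation that $\SB(\Fu_\Ft-\Sut)=0$ --- which is exactly the paper's Lemma~\ref{lem: stepfunction-operating-on-L2-functions}, stated there as $\SB\Sut=\SB\Fu_\Ft$ --- so that only the scalar $\beta_N$ survives in the fourth term. If anything, your dominated-convergence argument for $\int_0^T\left\|\Fu_\Ft-\Sut\right\|_2\,dt\rightarrow 0$ is more explicit than the paper's proof, which handles this step implicitly after invoking the contraction property of Proposition~\ref{prop:contraction-L2-function} and the uniform boundedness of $\{\alpha_N\},\{\beta_N\},\{\SA\},\{\SB\}$.
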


The control law ${u^N_{(\cdot)}}$ for the finite network system $({\alpha_N I + A_N; \beta_N I + B_N})$ is given by 
$$
	   {u^N_t}(i)={\Fu^{[\FN]}_{\Ft}}(\alpha),  \quad
	  \forall i\in\{1,...,nN\},   \forall \alpha \in P_i,  t\in[0, T].
$$
 Note that ${u^N}$ always exists by definition since the control approximation given in the definition (\ref{equ: input-approximation}) uses the same uniform partition as the step function approximation in the graphon space.

The operator norm in \eqref{equ:MM-terminal-difference} can be replaced by the $L^2[0,1]^2$ norm since $\|\cdot\|_{\textup{op}} \leq \|\cdot\|_2$.

\subsection{The Graphon State-to-state Control (GSSC) Strategy }
Consider the control problem of steering the states of each member of $\{(\alpha_N I +A_N; \beta_N I+ B_N)\}_{N=1}^{\infty} \in \mathcal{S}$ to each of a sequence of desired states $\{x^N_T \in \BR^{nN}\}_{N=1}^{\infty}$.  

The \textbf{Graphon State-to-state Control (GSSC) Strategy} consists of four steps:
\begin{enumerate}[\bf S.1]
	\it
	\item Let $\{(\alpha_N\BI+\SA;\beta_N \BI+ \SB) \in \mathcal{G}^1_\mathcal{{AI}} \times \mathcal{G}^1_\mathcal{{AI}}\}_{N=1}^{\infty}$ be  the sequence of graphon dynamical systems equivalent to  $\{(\alpha_N I +A_N; \beta_N I+ B_N)\}_{N=1}^{\infty} \in \mathcal{S}$ under the mapping $M_G$, and  assume  it converges to the graphon system $(\alpha\BI + \FA;\beta\BI+\FB) \in \mathcal{G}^1_\mathcal{{AI}} \times \mathcal{G}^1_\mathcal{{AI}}$ as in Definition \ref{def:system-convergence}. Let $\{ \Fx^\FN_\FT \in L^2[0,1]\}_{N=1}^{\infty}$ be the image of  $\{x^N_T \in \BR^{nN}\}_{N=1}^{\infty}$ under $M_G$, which is assumed to converge to some $\Fx^{\infty}_\FT \in L^2[0,1]$ in the $L^2[0,1]$ norm.
	\item Specify the corresponding state to state control problem $CP^{\infty}$ for $(\alpha\BI + \FA;\beta\BI+\FB) \in \mathcal{G}^1_\mathcal{{AI}} \times \mathcal{G}^1_\mathcal{{AI}}$  with $\Fx^{\infty}_\FT$  as the target terminal state  and choose a tolerance $\varepsilon>0$.
	\item Find a control law $\Fu^{\infty} := \{\Fu_\mathbf{\tau} \in L^2[0,1], \tau \in [0, T]\}$ solving   $CP^{\infty}$.
	
	\item Then generate the control law $\{\Su\}$ according to Theorems \ref{thm:mm-graphon-main} for which the convergence of 
	 $\{\Fx_\FT^\FN(\Fu^\FN)\}$ to $\Fx^{\infty}_\FT$  is guaranteed. 
Together with the assumed convergence of $\{ \Fx^\FN_\FT \in L^2[0,1]\}_{N=1}^{\infty}$ to $\Fx^{\infty}_\FT$, it yields $N_{\varepsilon}$  such that  $\Fx_\FT^\FN(\Fu^\FN)$ is within $\varepsilon$ of $\Fx^\FN_\FT$ for all $N\geq N_{\varepsilon}$ under the $L^2[0,1]$ norm.

\end{enumerate}

The notion of the effectiveness of the GSSC strategy for a sequence of network systems is defined to mean that (1) the terminal state is close to that achieved by the minimum energy control; (2) the computation for generating the control law is tractable.

The basic assumptions for the GSSC strategy are that (i) a sequence of finite network systems of interest converges to a limit graphon system (as in Definition \ref{def:system-convergence})
or a given instance of the network sequence can be closely approximated by a graphon system, 
  and (ii) the corresponding state-to-state control problem for the (limit) graphon system is tractable.%

 These assumptions, together with the approximation theorem (i.e. Theorem \ref{thm:mm-graphon-main}), guarantee the effectiveness of the GSSC strategy for the finite network,  that is to say, the GSSC strategy can achieve the target terminal state with a small error by means of a tractable computation.%
\subsection{Min-Energy State-to-state Control for Graphon Systems}
A specific control law which may be used in \textbf{S.2} of the GSSC strategy is described in this section.

Define the energy cost by the control over the time horizon $[0, T]$ as
$
 	J(\Fu) = \int_{0}^{T} \|\mathbf{u_{\tau}}\|^2 d\tau,  (0<T<\infty).
$
The objective is to drive the system  from some initial state $\Fx_\mathbf{0} \in L^2[0,1]$ to some target state $\Fx_\FT \in L^2[0,1]$ using minimum control energy. 
A function $\mathbf{u^*}\in L^2([0,T];L^2[0,1])$ is called an optimal control if
$
	J(\mathbf{u^*}) \leq J(\mathbf{u}),
$
for all $ \mathbf{u} \in L^2([0,T]; L^2[0,1])$ which drive the system from $\mathbf{x_0}$ to $\mathbf{x_T}$.
\begin{theorem}[Appendix \ref{sec:Inverse of the Controllability Gramian}]\label{theorem: existence of graphon Gramian inverse}
	If the graphon system $(\BA;\BB)$ in \eqref{equ: infinite-system-model} with $\BW_T$ as its graphon controllability Gramian operator is exactly controllable, then the inverse operator $ {\BW}_T^{-1}$ exists and is a bounded operator. 
\end{theorem}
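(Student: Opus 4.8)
The plan is to exploit the equivalence, recorded just above the statement, between exact controllability of $(\BA;\BB)$ on $[0,T]$ and the uniform positive definiteness (coercivity) of the self-adjoint Gramian operator $\BW_T$. From coercivity I will deduce that $\BW_T$ is bounded below, hence injective with closed range, and then use self-adjointness to upgrade this to surjectivity, which yields a bounded inverse.

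First I would record the elementary structural facts about $\BW_T$. Since $\BA,\BB \in \mathcal{G}^1_\mathcal{{AI}}$ are bounded operators on $L^2[0,1]$ and $[0,T]$ is a finite horizon, the integrand $e^{\BA(t-s)}\BB\BB^\TRANS e^{\BA^\TRANS(t-s)}$ is uniformly bounded in operator norm on $[0,T]$, so $\BW_T$ is a bounded operator on $L^2[0,1]$. Moreover the integrand is self-adjoint at each $s$, being of the form $M_s M_s^\TRANS$ with $M_s = e^{\BA(t-s)}\BB$; hence $\BW_T$ is self-adjoint and positive semidefinite.

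Next I invoke exact controllability. By the necessary-and-sufficient condition cited above, there is a constant $c_T>0$ such that $\langle \BW_T h, h\rangle \ge c_T \|h\|^2$ for all $h \in L^2[0,1]$. Applying the Cauchy--Schwarz inequality to the left-hand side gives $\|\BW_T h\|\,\|h\| \ge \langle \BW_T h,h\rangle \ge c_T\|h\|^2$, and therefore $\|\BW_T h\| \ge c_T \|h\|$ for every $h$. This coercive lower bound immediately shows that $\BW_T$ is injective and that its range is closed.

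Finally I would pass from ``bounded below'' to ``invertible.'' For a general bounded operator the lower bound alone yields only injectivity and closedness of the range, so the additional ingredient needed is self-adjointness: since $\BW_T = \BW_T^\TRANS$, the closure of the range satisfies $\overline{\mathrm{Ran}(\BW_T)} = (\ker \BW_T)^{\perp} = L^2[0,1]$, the last equality holding because $\ker\BW_T = \{0\}$ by injectivity. A range that is both dense and closed is the whole space, so $\BW_T$ is a bijection of $L^2[0,1]$ onto itself; hence $\BW_T^{-1}$ exists, and the bound $\|\BW_T h\|\ge c_T\|h\|$ translates into $\|\BW_T^{-1}\| \le 1/c_T$, so the inverse is bounded. (Equivalently, surjectivity may be obtained in one step by applying the Lax--Milgram theorem to the bounded coercive bilinear form $a(h,g)=\langle \BW_T h,g\rangle$.) The step to watch is precisely this last one: coercivity supplies injectivity and a closed range, but surjectivity must be extracted from self-adjointness (or Lax--Milgram), not from the lower bound by itself.
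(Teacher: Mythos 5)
Your proof is correct, but it takes a genuinely different route from the paper's. The paper does not argue from first principles: it shifts the Gramian by half the coercivity constant, writing $\mathbb{M}=\BW_T-\tfrac12 c_T\BI$ so that $\BW_T=\tfrac12 c_T\left(\BI+\tfrac{2}{c_T}\mathbb{M}\right)$, observes that $\mathbb{M}$ is symmetric and nonnegative (precisely because of the coercivity bound), and then invokes a cited proposition from Bensoussan et al.\ stating that $\BI+\BT\BS$ is one-to-one and onto with bounded inverse whenever $\BT$ and $\BS$ are symmetric and nonnegative; a strictly positive scaling then transfers invertibility to $\BW_T$. You instead run the classical self-adjoint-operator argument: Cauchy--Schwarz turns coercivity into the lower bound $\|\BW_T h\|\geq c_T\|h\|$, giving injectivity and closed range, and self-adjointness gives dense range via $\overline{\mathrm{Ran}(\BW_T)}=(\ker \BW_T)^{\perp}=L^2[0,1]$, so $\BW_T$ is a bijection with $\|\BW_T^{-1}\|\leq 1/c_T$. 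Your version is self-contained and yields the sharp explicit bound $1/c_T$ on the inverse norm, whereas the paper's shift-and-cite argument only delivers a cruder bound of the form $\tfrac{2}{c_T}\left(1+\tfrac{2}{c_T}\|\mathbb{M}\|\right)$ through the cited estimate; what the paper's route buys is brevity and consistency with the infinite-dimensional systems reference it builds on throughout. Your closing remark correctly identifies the one step that cannot be skipped --- surjectivity comes from self-adjointness (or Lax--Milgram), not from the lower bound alone --- and your proof handles it properly.
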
 

Assume the system $(\BA;\BB)$ is exactly controllable, then $\BW^{^{-1}}_T$ exists and the optimal control law that achieves the minimum energy control is given by
\begin{equation}\label{equ:mm-optimal-control-law}
	\mathbf{u^*_{\tau}}=  \BB^\TRANS e^{{\BA}^\TRANS(T-\tau)} {\BW}_T^{-1}(\mathbf{x_T}-e^{{\BA}(T)} \mathbf{x_0}),\quad \tau \in [0,T].
\end{equation}
The minimum energy for controlling the system in time horizon $[0,T]$ is 
\begin{equation}
	\begin{aligned}
		\|\mathbf{u}\|_{L^2([0,T];L^2[0,1])}^2 
		=  [\mathbf{x_\FT}-e^{{\BA}(T)}\mathbf{x_0}]^\TRANS{\BW}^{-1}_T[\mathbf{x_T}-e^{{\BA}(T)}\mathbf{x_0}].
	\end{aligned}
\end{equation}

Denote the spectral decomposition of $\FA$ is as follows
  $
  \FA (x,y) = \sum_{\ell\in I_\lambda} \lambda_\ell \Ff_\ell(x) \Ff_\ell(y),
  $
  where $\Ff_\ell$ is the normalized eigenfunction corresponding to the eigenvalue $\lambda_\ell$ and $I_\lambda$ is the index set for non-zero eigenvalues of $\FA$, which contains a countable number of elements \cite{lovasz2012large}. 
\begin{proposition}[Appendix \ref{sec:Inverse of the Controllability Gramian}]\label{prop:inverse of the controllability Gramian operator} 
Consider a graphon system $(\FA; \BI)$. 
	Then 
  \begin{enumerate}[1)]
  	\item  the controllability Gramian operator is given by 
		\begin{equation}
      \BW_T = T\BI + \sum_{\ell\in I_\lambda} \left(\frac1{2\lambda_\ell}[e^{2\lambda_\ell T}-1]-T \right) \Ff_\ell \Ff_\ell^\TRANS ;
    \end{equation}
	\item the inverse of the controllability Gramian operator for $(\FA;\BI)$ is given by
  \begin{equation}\label{equ:gramian-inverse}
    \BW_T^{-1}= \frac1T \BI - \frac1T \sum_{\ell \in I_\lambda} \frac{\frac1{2\lambda_\ell}[e^{2\lambda_\ell T}-1]-T}{\left(\frac1{2\lambda_\ell}[e^{2\lambda_\ell T}-1] \right)} \Ff_\ell \Ff_\ell^\TRANS.
  \end{equation}
  \end{enumerate}
  \end{proposition}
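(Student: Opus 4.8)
The plan is to evaluate $\BW_T$ directly from its definition with $\BB=\BI$ and then read off the inverse from the eigenbasis of $\FA$. First I would use $\BB\BB^\TRANS=\BI$ together with the self-adjointness of the graphon $\FA$ (so $\FA^\TRANS=\FA$ and $e^{\FA^\TRANS(T-s)}=e^{\FA(T-s)}$), and the change of variables $\tau=T-s$, to collapse the Gramian to $\BW_T=\int_0^T e^{2\FA\tau}\,d\tau$. Since $\FA$ commutes with itself, the two exponential factors merge into $e^{2\FA(T-s)}$, which makes the remaining computation purely spectral.

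Next I would expand the exponential through the spectral decomposition $\FA=\sum_{\ell\in I_\lambda}\lambda_\ell\Ff_\ell\Ff_\ell^\TRANS$ of \eqref{equ: graphon-infinite-spectral-sum}. Since the eigenfunctions $\{\Ff_\ell\}_{\ell\in I_\lambda}$ together with an orthonormal basis of $\ker(\FA)$ diagonalize $\FA$, the operator $e^{2\FA\tau}$ acts as $e^{2\lambda_\ell\tau}$ on each $\Ff_\ell$ and as the identity on $\ker(\FA)$; writing the kernel projection as $\BI-\sum_{\ell\in I_\lambda}\Ff_\ell\Ff_\ell^\TRANS$ gives $e^{2\FA\tau}=\BI+\sum_{\ell\in I_\lambda}(e^{2\lambda_\ell\tau}-1)\Ff_\ell\Ff_\ell^\TRANS$. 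Integrating term by term and using $\int_0^T(e^{2\lambda_\ell\tau}-1)\,d\tau=\frac{1}{2\lambda_\ell}(e^{2\lambda_\ell T}-1)-T$ yields part 1) immediately.

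For part 2), the key observation is that the expression just obtained already diagonalizes $\BW_T$ in the same orthonormal basis: combining the $T\BI$ term with its coefficient shows that $\BW_T$ acts as multiplication by $d_\ell:=\frac{1}{2\lambda_\ell}(e^{2\lambda_\ell T}-1)$ on $\Ff_\ell$ and by $T$ on $\ker(\FA)$. The system $(\FA;\BI)$ is exactly controllable by Theorem \ref{thm:sufficient-condition-for-exact-controllability} (the spectrum of $\BB\BB^\TRANS=\BI$ is $\{1\}$), so $\BW_T^{-1}$ exists and is bounded by Theorem \ref{theorem: existence of graphon Gramian inverse}, and it is obtained by reciprocating each eigenvalue: $\BW_T^{-1}=\frac1T P_{\ker}+\sum_{\ell\in I_\lambda}d_\ell^{-1}\Ff_\ell\Ff_\ell^\TRANS$. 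Substituting $P_{\ker}=\BI-\sum_{\ell\in I_\lambda}\Ff_\ell\Ff_\ell^\TRANS$ and simplifying $d_\ell^{-1}-T^{-1}=-\frac1T\frac{d_\ell-T}{d_\ell}$ reproduces the stated formula \eqref{equ:gramian-inverse}.

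The main obstacle I expect is analytic rather than algebraic: one must justify interchanging the integral with the infinite spectral sum and confirm that the reciprocated eigenvalues assemble into a genuine bounded operator. This reduces to showing that $d_\ell$ is bounded away from $0$ uniformly in $\ell$. That follows because $g(\lambda):=\frac{1}{2\lambda}(e^{2\lambda T}-1)$ extends continuously to $g(0)=T>0$ and is strictly positive on the compact interval $[-1,1]$ containing every $\lambda_\ell$, hence bounded below by a positive constant there; this is the quantitative content behind the boundedness asserted in Theorem \ref{theorem: existence of graphon Gramian inverse}. The accumulation of the $\lambda_\ell$ only at $0$, which forces $d_\ell\to T$, keeps the series $\sum_{\ell\in I_\lambda}(d_\ell^{-1}-T^{-1})\Ff_\ell\Ff_\ell^\TRANS$ convergent and completes the argument.
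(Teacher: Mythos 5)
Your proof is correct and takes essentially the same route as the paper: both rest on the spectral decomposition of $\FA$, writing $\BW_T=\int_0^T e^{2\FA\tau}\,d\tau$ as a diagonal operator (acting as $d_\ell=\frac{1}{2\lambda_\ell}(e^{2\lambda_\ell T}-1)$ on $\Ff_\ell$ and as $T$ on $\ker(\FA)$) and inverting by reciprocating these eigenvalues --- the paper's manipulation of solving $\Fu=\BW_T\Fx$ for $\Fx$ via the projections $\Ff_\ell^\TRANS\Fu=d_\ell\,\Ff_\ell^\TRANS\Fx$ is exactly this reciprocation in a different presentation. Your closing paragraph (exact controllability of $(\FA;\BI)$ via Theorem \ref{thm:sufficient-condition-for-exact-controllability}, the uniform positive lower bound on $d_\ell$ from the continuity of $g(\lambda)=\frac{1}{2\lambda}(e^{2\lambda T}-1)$ at $\lambda=0$, and convergence of the correction series since $\lambda_\ell\to 0$ forces $d_\ell\to T$) adds analytic justification that the paper's formal derivation leaves implicit, but it is a strengthening of the same argument rather than a different approach.
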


 Note that 
 $
 	\lim_{\lambda_\ell \rightarrow 0}\left(\frac1{2\lambda_\ell}[e^{2\lambda_\ell T}-1]-T \right) =0.
 $ 
 %
We further note that approximate controllability is not sufficient to achieve state-to-state control since in that case the inverse operator $\BW_T^{-1}$ may not be bounded on certain subspaces in $L^2[0,1]$, and moreover the corresponding energy required would be unbounded.

\section{Graphon Linear Quadratic Regulation (LQR) of Network Systems}

\subsection{LQR Problems for Graphon Dynamical Systems}
 For finite $T>0$, consider the problem of minimizing the cost given by
\begin{equation} \label{equ:LQ-cost}
	J(\Fu)=\int_0^T\left( \| \BC \Fx_{\tau}\|^2_2+\| \Fu_{\tau}\|^2_2\right)d\tau+\langle \BP_0\Fx_\FT, \Fx_\FT\rangle
\end{equation}
over all controls $\Fu\in L^2([0,T]; L^2[0,1])$ subject to the system model constrains in (\ref{equ: infinite-system-model}). 
\begin{assumption}\label{ass:P-and-C}~
  $\BP_0 \in \mathcal{L}(L^2[0,1])$ is Hermitian and non-negative; $\BC\in \mathcal{L}(L^2[0,1])$
\end{assumption}

Finding the feedback control via dynamic programming consists of the two following standard steps \cite{bensoussan2007representation}:
\begin{enumerate}[1)]
  \item Solving the Riccati equation
\begin{equation}\label{equ: Riccati-Equation}
  \dot{\BP}=\BA^\TRANS \BP+\BP \BA- \BP\BB\BB^\TRANS \BP + \BC^\TRANS \BC, \quad \BP_0 \in \mathcal{L}(L^2[0,1]);
\end{equation}
\item 
Given the solution $\BP$ to the Riccati equation,  the optimal control $\Fu^*$ is given by 
\begin{equation} \label{equ: feedback-control-law}
  \Fu^*_\Ft=-\BB^\TRANS \BP_{T-t} \Fx^*_\Ft, \quad t\in [0, T]
\end{equation}
and 
the optimal trajectory $\Fx^*$  is then the solution to the closed loop equation
\begin{equation} \label{equ: Closed-Loop-System}
  \begin{aligned}
   \dot{\Fx}_\Ft=\BA \Fx_\Ft- \BB \BB^\TRANS \BP_{T-t} \Fx_\Ft, ~
  t\in [0,T], \Fx_0 \in L^2[0,1].
  \end{aligned}
\end{equation}
\end{enumerate}
Let 
$
  \Sigma(L^2[0,1])=\big\{\BT\in \mathcal{L}(L^2[0,1]): \BT  \textrm{ is Hermitian}\big\}
$
and 
\begin{equation}
  \begin{aligned}
  &\Sigma^+(L^2[0,1]) \\
  &=\big\{\BT\in \Sigma(L^2[0,1]): \langle \BT \Fv, \Fv\rangle \geq 0, ~ \forall \Fv \in L^2[0,1] \big\}.
\end{aligned}
\end{equation}
Denote the topological space of all strongly continuous mappings $\mathbb{F}: I \rightarrow \Sigma(L^2[0,1])$ endowed with strong convergence (see \cite{bensoussan2007representation}) by $C_s(I; \Sigma(L^2[0,1]))$ where $I$ denotes a compact interval, that is, the convergence of $\mathbb{F}_N$ to $\mathbb{F}$ in $C_s(I; \Sigma(L^2[0,1]))$  is defined by 
\begin{equation}
  \forall \Fv \in L^2[0,1], \quad \lim_{N\rightarrow\infty} \sup_{t\in I}\|\mathbb{F}_N(t)\Fv - \mathbb{F}(t)\Fv\|_2 =0.
\end{equation}

\begin{proposition}[{{\cite[Part IV-1]{bensoussan2007representation}}}] \label{prop:Riccati-Sol}
Under \textup{Assumption \ref{ass:P-and-C}}, there exist a unique solution to the Riccati equation  (\ref{equ: Riccati-Equation}) in $C_s([0,T]; \Sigma^{+}(L^2[0,1]))$ and an optimal solution pair $(\Fu^*, \Fx^*)$ to (\ref{equ: feedback-control-law}) and (\ref{equ: Closed-Loop-System}) where $\Fx^* \in C([0,T];L^2[0,1])$ and $\Fu^* \in L^2([0,T];L^2[0,1])$.
\end{proposition}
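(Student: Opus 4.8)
The plan is to reduce the statement to the abstract infinite-dimensional LQR theory of \cite{bensoussan2007representation}, verifying that our hypotheses meet theirs while sketching the self-contained argument. The key simplifying feature is that, because $\BA,\BB\in\mathcal{G}^1_\mathcal{{AI}}$ are \emph{bounded} operators (so $\BA$ generates a uniformly continuous semigroup, as established before Lemma \ref{lem:seperation-graphon-exp}), the Riccati equation (\ref{equ: Riccati-Equation}) is an honest ordinary differential equation in the Banach space $\mathcal{L}(L^2[0,1])$, and no unbounded-generator machinery is needed. First I would settle existence and uniqueness of the optimal control directly: by Proposition \ref{prop: uniqueness of the solution to infinite dim system} the map $\Fu\mapsto\Fx$ is affine and continuous from $L^2([0,T];L^2[0,1])$ into $C([0,T];L^2[0,1])$, so $J$ in (\ref{equ:LQ-cost}) is a continuous quadratic functional; the term $\int_0^T\|\Fu_\tau\|_2^2\,d\tau$ makes it strictly convex and coercive, while Assumption \ref{ass:P-and-C} ($\BP_0\geq 0$, $\BC$ bounded) makes the remaining terms convex and non-negative. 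A strictly convex, coercive, continuous functional on a Hilbert space has a unique minimizer $\Fu^*$, yielding the optimal pair $(\Fu^*,\Fx^*)$ before any Riccati analysis.

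Next I would identify the feedback form. Setting the G\^ateaux derivative of $J$ to zero gives the adjoint (costate) equation $\dot{\mathbf{p}}_t=-\BA^\TRANS\mathbf{p}_t+\BC^\TRANS\BC\,\Fx^*_t$ with terminal condition $\mathbf{p}_T=\BP_0\Fx^*_T$, together with the optimality relation $\Fu^*_t=-\BB^\TRANS\mathbf{p}_t$. To decouple this two-point boundary value problem I would substitute the Riccati ansatz $\mathbf{p}_t=\BP_{T-t}\Fx^*_t$; differentiating and using the state and adjoint equations forces the operator-valued function $\tau\mapsto\BP_\tau$ to satisfy exactly (\ref{equ: Riccati-Equation}), and reproduces the feedback law (\ref{equ: feedback-control-law}) and closed-loop dynamics (\ref{equ: Closed-Loop-System}).

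The main work, and the step I expect to be the principal obstacle, is the global existence, uniqueness, and positivity of the Riccati solution in $C_s([0,T];\Sigma^+(L^2[0,1]))$. Viewing (\ref{equ: Riccati-Equation}) as an ODE on $\Sigma(L^2[0,1])$ with the quadratic (hence locally Lipschitz) right-hand side $F(\BP)=\BA^\TRANS\BP+\BP\BA-\BP\BB\BB^\TRANS\BP+\BC^\TRANS\BC$, the Banach-space Picard--Lindel\"of theorem gives a unique local solution; the difficulty is ruling out finite-time blow-up produced by the quadratic term $-\BP\BB\BB^\TRANS\BP$. I would close this gap through the value-function interpretation rather than a crude norm estimate: identifying $\langle\BP_\tau\Fv,\Fv\rangle$ with the optimal cost-to-go from state $\Fv$ over a horizon of length $\tau$, non-negativity of $J$ gives $\langle\BP_\tau\Fv,\Fv\rangle\geq 0$, while comparison against the admissible zero control yields an upper bound $\langle\BP_\tau\Fv,\Fv\rangle\leq M\|\Fv\|_2^2$ uniform for $\tau\in[0,T]$, with $M$ depending only on $T$, $\|\BP_0\|_\textup{op}$, $\|\BC\|_\textup{op}$ and the semigroup bound from Lemma \ref{lem:exp-of-I+A}. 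This uniform a priori bound keeps the solution in a fixed bounded subset of $\Sigma^+$, so the local solution extends to all of $[0,T]$ and stays Hermitian non-negative. A further subtlety is that the target topology is the \emph{strong} operator topology of $C_s$, not the operator-norm topology; I would therefore argue continuity and convergence pointwise in each $\Fv\in L^2[0,1]$, using the uniform bound to upgrade pointwise control to the required strong-continuity statement.

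Finally I would verify sufficiency and regularity. Given the Riccati solution, a completion-of-squares identity gives, for every admissible $\Fu$, $J(\Fu)=\langle\BP_T\Fx_0,\Fx_0\rangle+\int_0^T\|\Fu_\tau-\Fu^*_\tau\|_2^2\,d\tau$, which both confirms optimality of the feedback (\ref{equ: feedback-control-law}) and matches the unique minimizer from the first step. The closed-loop equation (\ref{equ: Closed-Loop-System}) is a linear evolution with the uniformly bounded, strongly continuous generator $\BA-\BB\BB^\TRANS\BP_{T-t}$, so standard non-autonomous linear theory delivers a unique $\Fx^*\in C([0,T];L^2[0,1])$; then $\Fu^*_t=-\BB^\TRANS\BP_{T-t}\Fx^*_t$ is a composition of a uniformly bounded strongly continuous operator family with a continuous trajectory, hence lies in $C([0,T];L^2[0,1])\subset L^2([0,T];L^2[0,1])$. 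Throughout, the role of Assumption \ref{ass:P-and-C} and the boundedness of $\BA,\BB$ is precisely to place the problem inside the hypotheses of \cite[Part IV-1]{bensoussan2007representation}, so the argument above is in effect a verification that our specialization to the state and control spaces $L^2[0,1]$ satisfies that reference's standing assumptions.
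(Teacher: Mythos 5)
Your proposal is correct in outline, but it takes a genuinely different route from the paper for a simple reason: the paper contains no proof of Proposition \ref{prop:Riccati-Sol} at all --- the result is quoted directly from \cite[Part IV-1]{bensoussan2007representation}, the only ``verification'' in the text being that $\BA, \BB \in \mathcal{G}^1_{\mathcal{AI}}$ are bounded operators on $L^2[0,1]$ and that Assumption \ref{ass:P-and-C} places the problem inside that reference's standing hypotheses (precisely the observation in your closing sentence). What you have written is a self-contained reconstruction of the classical argument: direct existence and uniqueness of the minimizer by strict convexity and coercivity, the costate derivation of the feedback, Picard--Lindel\"of for the operator Riccati ODE, an a priori value-function bound to rule out finite-time blow-up, and completion of squares for sufficiency. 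This buys independence from the reference, and in the bounded-operator setting it is markedly simpler than Bensoussan et al.'s general development, which must handle unbounded generators via mild formulations. In fact one simplification goes further than you claim: since $F(\BP)=\BA^\TRANS\BP+\BP\BA-\BP\BB\BB^\TRANS\BP+\BC^\TRANS\BC$ is locally Lipschitz in the operator norm, the local solution is automatically \emph{norm}-continuous in $t$, so membership in $C_s([0,T];\Sigma^{+}(L^2[0,1]))$ is immediate and the strong-topology subtlety you flag in your third paragraph is vacuous here; the $C_s$ topology matters in this paper only for the later approximation results (e.g.\ Lemma \ref{lem:uniform-converge-Approx} and Theorem \ref{thm: Convergence of the States}), not for existence.

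One local error to fix: with your terminal condition $\mathbf{p}_T=\BP_0\Fx^*_\FT$ and optimality relation $\Fu^*_\Ft=-\BB^\TRANS\mathbf{p}_t$, the costate equation must read $\dot{\mathbf{p}}_t=-\BA^\TRANS\mathbf{p}_t-\BC^\TRANS\BC\,\Fx^*_\Ft$, not $+\BC^\TRANS\BC\,\Fx^*_\Ft$. With your sign, the substitution $\mathbf{p}_t=\BP_{T-t}\Fx^*_\Ft$ produces a Riccati equation carrying $-\BC^\TRANS\BC$, contradicting (\ref{equ: Riccati-Equation}); one can check the correct sign by differentiating $\langle \mathbf{p}_t,\delta\Fx_\Ft\rangle$ along a control perturbation and matching the G\^ateaux derivative of $J$. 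This is a sign slip rather than a structural gap --- the remainder of your argument, in particular the positivity bootstrap and the completion-of-squares identity, is consistent with the corrected sign.
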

\subsection{The Graphon-Network LQR (GLQR) Strategy}
Consider the control problem of regulating the states of each member of $\{(\alpha_N I + A_N; \beta_N I + B_N)\}_{N=1}^{\infty} \in \mathcal{S}$.

The \textbf{Graphon-Network LQR (GLQR) Strategy} is as follows:
\begin{enumerate}[\bf S.1]
	\it
	\item  Let $\{(\BSA; \BSB) \in \mathcal{G}^1_\mathcal{{AI}} \times \mathcal{G}^1_\mathcal{{AI}} \}_{N=1}^{\infty}$ be  the sequence of equivalent representation of network systems  $\{(\alpha_N I + A_N; \beta_N I+ B_N)\}_{N=1}^{\infty} \in \mathcal{S}$ under the mapping $M_G$ and  assume that it converges to the graphon system $(\BA;\BB) \in \mathcal{G}^1_\mathcal{{AI}} \times \mathcal{G}^1_\mathcal{{AI}}$ as in Definition \ref{def:system-convergence}.
	\item Define the linear quadratic cost for $(\BA;\BB)$ as 
\begin{equation}
  J(\Fu)= \int_0^T\left[\big\|\BC \Fx_\tau\big\|^2_2+\big\|\Fu_\tau\big\|^2_2\right]d\tau + \langle \BP_0 \Fx_\FT, \Fx_\FT\rangle
\end{equation}
 and  the linear quadratic cost for $(\BSA;\BSB)$ as 
\begin{multline}
  J(\Su)  = \int_0^T\left[\big\|\BSC \Sxt\big\|^2_2
        +\big\|\Sut\big\|^2_2\right]dt \\
        +\langle \BSPZ \SxT, \SxT\rangle
\end{multline}
where it is assumed that $\BSC \rightarrow \BC$ and $\BSPZ \rightarrow \BP_0$ in the strong operator sense.   Solve the infinite dimensional Riccati equation for $(\BA;\BB)$ to generate the solution $\BP$. 

	\item Approximate $\BP$ to generate $\BAP$ and hence the control law 
   $\Sut= -\BSB^\TRANS \BAPt{T-t} \Sxt$
   for $(\BSA;\BSB)$. 
\end{enumerate}

 Parallel to the state-to-state control problem, we take the notion of the effectiveness of the GLQR strategy for a sequence of network systems to be that (1) the regulation cost and the state trajectory are close to those achieved by the optimal LQR control; (2) the computation for generating the control law is tractable. 

 In analogy with the state-to-state control problem, the basic assumptions for the GLQR strategy are that the sequence of finite network systems converges to a limit graphon system (as in Definition \ref{def:system-convergence}) or that a given network system can be closely approximated by a graphon system, and that the corresponding LQR problem for the (limit) graphon system is tractable.

These assumptions, together with Theorem \ref{thm: Convergence of the States}, guarantee the effectiveness of the GLQR strategy for the finite network systems that are sufficiently close to the limit graphon system for sufficiently large node cardinality.

\subsection{Control Law Approximations}

 By approximating the Riccati equation solution $\BP$ for $(\BA;\BB )$ we can generate $\BAP$ that provides the control law for the finite dimensional network system:
 \begin{equation}
   \Sut= -\BSB^\TRANS\BAPt{(T-t)}\Sxt .
 \end{equation}

 Consider the strongly continuous linear operator $\BP$ in $C_s([0,T]; \Sigma^+(L^2[0,1]))$. Its approximation is given by
\begin{equation}\label{equ:approximate-Riccati-Solution}
  \begin{aligned}
    &\BAPt{t}(x,y) =\frac{\langle \mathbf{1}_{Q_i}, \BP_t \mathbf{1}_{Q_j} \rangle}{\mu(Q_i) \mu(Q_j)},
    & \forall t \in[0,T], \quad  \forall (x,y) \in Q_i \times Q_j, 
  \end{aligned}
\end{equation} 
where $\{Q_1, Q_2,\ldots,  Q_{nN}\}$ forms a partition of $[0,1]$ and $\mu(Q_i)$ represents the length of the interval $Q_i$. In the case of uniform partition, $\mu(Q_i)=\frac1{nN}$.

\begin{lemma}\label{lem:uniform-converge-Approx} %
Let $\BAP$ be generated by the step function approximation of $\BP$ via the $N$ uniform partition of $[0,1]$ according to \eqref{equ:approximate-Riccati-Solution}. Then
\begin{equation}\label{equ:strong-convergence-Riccati-Aprox}
  \lim_{N\rightarrow \infty} \BAP = \BP, \quad \textup{ in } C_s([0,T]; \Sigma(L^2[0,1])).
\end{equation}
\end{lemma}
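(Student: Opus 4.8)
The plan is to recognize that the approximation formula \eqref{equ:approximate-Riccati-Solution} realizes $\BAPt{t}$ as the compression $E_N \BP_t E_N$, where $E_N$ denotes the orthogonal projection of $L^2[0,1]$ onto the subspace $L^2_{pwc}[0,1]$ of functions that are constant on each cell of the uniform partition $\{Q_1,\dots,Q_{nN}\}$; concretely $(E_N\Fv)(x)=\tfrac{1}{\mu(Q_i)}\int_{Q_i}\Fv(\alpha)\,d\alpha$ for $x\in Q_i$, which is exactly the averaging operator of Proposition \ref{prop:contraction-L2-function}. Computing $(\BAPt{t}\Fv)(x)$ directly from the kernel in \eqref{equ:approximate-Riccati-Solution} shows it agrees with $E_N\BP_t E_N\Fv$, and since $E_N^{\TRANS}=E_N$ and $\BP_t$ is Hermitian (Proposition \ref{prop:Riccati-Sol}), each $\BAPt{t}=E_N\BP_t E_N$ indeed lies in $\Sigma(L^2[0,1])$. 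By the definition of convergence in $C_s([0,T];\Sigma(L^2[0,1]))$, it then suffices to prove that for every fixed $\Fv\in L^2[0,1]$, $\sup_{t\in[0,T]}\|E_N\BP_t E_N\Fv-\BP_t\Fv\|_2\to 0$.

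Second, I would split the error through the telescoping identity $E_N\BP_t E_N\Fv-\BP_t\Fv = E_N\BP_t(E_N-\BI)\Fv + (E_N-\BI)\BP_t\Fv$, which yields $\|E_N\BP_t E_N\Fv-\BP_t\Fv\|_2 \le \|\BP_t\|_\textup{op}\,\|(E_N-\BI)\Fv\|_2 + \|(E_N-\BI)\BP_t\Fv\|_2$, using $\|E_N\|_\textup{op}\le 1$ from Proposition \ref{prop:contraction-L2-function}. For the first term I need a bound on $\|\BP_t\|_\textup{op}$ uniform over $t\in[0,T]$: since $\BP\in C_s([0,T];\Sigma^+(L^2[0,1]))$, for each $\Fw$ the map $t\mapsto\BP_t\Fw$ is continuous on the compact interval $[0,T]$ and hence bounded, so by the uniform boundedness principle $M:=\sup_{t\in[0,T]}\|\BP_t\|_\textup{op}<\infty$. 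The first term is then dominated by $M\|(E_N-\BI)\Fv\|_2$, which tends to $0$ with no $t$-dependence because $E_N\to\BI$ strongly — a consequence of the density of piece-wise constant functions (Theorem \ref{theorem: hewitt1965real}) together with $\|E_N\|_\textup{op}\le1$ via a standard $3\varepsilon$-argument.

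The main obstacle is the second term $\sup_{t\in[0,T]}\|(E_N-\BI)\BP_t\Fv\|_2$, where the argument $\BP_t\Fv$ itself varies with $t$, so pointwise strong convergence of $E_N$ is not enough. The idea is to exploit that the trajectory $K:=\{\BP_t\Fv:t\in[0,T]\}$ is a compact subset of $L^2[0,1]$, being the continuous image of the compact set $[0,T]$ under $t\mapsto\BP_t\Fv$, and that a uniformly bounded sequence of operators converging strongly converges uniformly on compact sets. Concretely, given $\varepsilon>0$ I would cover $K$ by finitely many balls of radius $\varepsilon/6$ centered at points $\BP_{t_k}\Fv$, choose $N_0$ so large that $\|(E_N-\BI)\BP_{t_k}\Fv\|_2<\varepsilon/3$ for all the finitely many centers and all $N\ge N_0$, and then for arbitrary $t$ pick the nearest center $t_k$ and estimate $\|(E_N-\BI)\BP_t\Fv\|_2 \le \|E_N-\BI\|_\textup{op}\,\|\BP_t\Fv-\BP_{t_k}\Fv\|_2 + \|(E_N-\BI)\BP_{t_k}\Fv\|_2 \le 2\cdot(\varepsilon/6)+\varepsilon/3<\varepsilon$, using $\|E_N-\BI\|_\textup{op}\le 2$. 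This forces the second term to vanish uniformly in $t$.

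Combining the two estimates gives $\sup_{t\in[0,T]}\|E_N\BP_t E_N\Fv-\BP_t\Fv\|_2\to 0$ for every $\Fv\in L^2[0,1]$, which is precisely \eqref{equ:strong-convergence-Riccati-Aprox}. I expect the compactness-plus-uniform-boundedness step of the third paragraph to be the crux; the remainder is the routine reduction to the projection $E_N$ and the standard density and uniform boundedness facts already available in the paper.
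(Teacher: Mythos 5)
Your proof is correct, and it takes a genuinely different route from the paper's. The paper proves equicontinuity of the family $\{\BAPt{(\cdot)}\Fv\}_{N}$: it notes that $t\mapsto\BP_t\Fv$ is Lipschitz on $[0,T]$ with some constant $C_v$ (a consequence of $\BP$ solving the Riccati equation with bounded coefficient operators), observes via the contraction property of Proposition~\ref{prop:contraction-L2-function} that the approximations inherit the same Lipschitz constant, and then invokes the Arzel\`a--Ascoli theorem to upgrade pointwise-in-$t$ convergence to convergence uniform over $[0,T]$. You instead identify $\BAPt{t}=E_N\BP_t E_N$ with $E_N$ the averaging projection, split the error as $E_N\BP_t(E_N-\BI)\Fv+(E_N-\BI)\BP_t\Fv$, control the first term by $\sup_{t\in[0,T]}\|\BP_t\|_{\textup{op}}<\infty$ (uniform boundedness principle) together with $E_N\to\BI$ strongly, and uniformize the second term by a finite covering of the compact trajectory $\{\BP_t\Fv:t\in[0,T]\}$. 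The trade-off: your argument needs only the $C_s$-continuity of $\BP$ --- no Lipschitz regularity and hence no appeal to the Riccati dynamics at all, so it works verbatim for any strongly continuous operator-valued $\BP$ --- and it makes fully explicit the pointwise convergence that the paper leaves implicit; the paper's route is shorter because the ODE structure hands it the modulus of continuity directly and the contraction property transfers that modulus to $\BAP$ without introducing projections. A small additional benefit of your identification: the paper's assertion that $\BAPt{t}\Fv$ is the piecewise-constant approximation of $\BP_t\Fv$ (whence $\|\BAPt{t}\Fv\|_2\le\|\BP_t\Fv\|_2$) is literally exact only when $\Fv$ is itself piecewise constant --- in general $\BAPt{t}\Fv=E_N\BP_tE_N\Fv$ approximates $\BP_tE_N\Fv$ --- and your two-sided split handles this cleanly, while also verifying that each $\BAPt{t}$ is Hermitian, so the limit statement genuinely takes place in $C_s([0,T];\Sigma(L^2[0,1]))$.
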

\begin{proof}
Consider an arbitrary function $\Fv\in L^2[0,1]$. Based on the definition of the step function approximation in \eqref{equ:approximate-Riccati-Solution}, for any $t\in[0,T]$,  $\BAPt{t} \Fv $ is the piece-wise-constant function approximation  of $\BP_{t} \Fv \in L^2[0,1]$ as in \eqref{equ:L2function-approx} and by Proposition \ref{prop:contraction-L2-function}, $\|\BAPt{t} \Fv\|_2\leq \|\BP_{t} \Fv\|_2$. Since the Riccati equation \eqref{equ: Riccati-Equation} over the closed bounded interval $[0,T]$ has a solution $\BP$ (see Proposition \ref{prop:Riccati-Sol}), for any $\Fv \in L^2[0,1]$
there exist $C_v>0$ such that
\begin{equation}
   \left\|\BP_{t_0}\Fv-\BP_t\Fv\right\|_2 \leq C_v|t_0-t|,  \quad \forall t_0, t \in [0,T].
 \end{equation} 
Note that $(\BAPt{t_0}-\BAPt{t})\Fv \in L^2[0,1]$ is an approximation of  $(\BP_{t_0}-\BP_t)\Fv \in L^2[0,1]$ following \eqref{equ:L2function-approx}. Therefore by the contraction property in Proposition \ref{prop:contraction-L2-function},  we obtain 
\begin{equation}
   \left\|\BAPt{t_0}\Fv-\BAPt{t}\Fv\right\|_2 \leq C_v|t_0-t|,  \quad \forall t_0, t \in [0,T],
 \end{equation} 
and hence the sequence of functions $\{\BAPt{(\cdot)}\Fv\}_{N=1}^\infty$ is equicontinuous (see e.g. \cite[p.43]{rudin1991functional}). 
Furthermore, $\BAPt{(\cdot)} \Fv $ and $\BP_{(\cdot)} \Fv$ are continuous functions defined over the closed bounded time interval $[0,T]$. Hence by the Arzel\`a-Ascoli Theorem, 
\begin{equation}
  \forall t \in [0,T], \quad \lim_{N\rightarrow \infty}  \left\| \BAPt{t} \Fv - \BP_{t} \Fv \right\|_2 =0 
\end{equation}
implies that, 
for any $\Fv\in L^2[0,1]$,
\begin{equation}
  \lim_{N\rightarrow \infty} \sup_{t\in[0,T]} \left\| \BAPt{t} \Fv - \BP_{t} \Fv \right\|_2 =0,
\end{equation}
 which gives the result in \eqref{equ:strong-convergence-Riccati-Aprox}.
\end{proof}

\begin{lemma}[Appendix \ref{sec:Proofs for  Graphon-LQR}] \label{lem:Convergnece of Approximated Riccati Solution} 
	Let $\BAP$ be generated by step function approximation from $\BP$ via the $N$ uniform partition of $[0,1]$ according to \eqref{equ:approximate-Riccati-Solution}. If  $\BSP$  converges strongly to the solution $\BP$  in $C_s([0,T]; \Sigma(L^2[0,1]))$, then for any $\Fx \in L^2[0,1]$, 
 \begin{equation*}
   \lim_{N\rightarrow \infty}\sup_{t\in[0,T]}\left\|\BAPt{t} \Fx- \BSPt{t} \Fx \right\|_2 =0.
 \end{equation*}
\end{lemma}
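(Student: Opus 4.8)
The plan is to deduce the statement from a single triangle inequality that inserts the common limit $\BP$ between the two families $\BAP$ and $\BSP$, since each of them is already known to converge to $\BP$ uniformly in $t$ in the strong sense. First I would fix an arbitrary $\Fx \in L^2[0,1]$; then for every $t\in[0,T]$ the triangle inequality in $L^2[0,1]$ gives
\begin{equation*}
\begin{aligned}
  &\left\|\BAPt{t}\Fx - \BSPt{t}\Fx\right\|_2 \\
  &\qquad \leq \left\|\BAPt{t}\Fx - \BP_t \Fx\right\|_2 \\
  &\qquad\quad + \left\|\BP_t\Fx - \BSPt{t}\Fx\right\|_2 .
\end{aligned}
\end{equation*}

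Next I would take the supremum over $t\in[0,T]$ of both sides and use subadditivity of the supremum to bound $\sup_{t}\|\BAPt{t}\Fx - \BSPt{t}\Fx\|_2$ by the sum of $\sup_{t}\|\BAPt{t}\Fx - \BP_t\Fx\|_2$ and $\sup_{t}\|\BP_t\Fx - \BSPt{t}\Fx\|_2$. The first of these supremum terms tends to $0$ as $N\to\infty$ by Lemma \ref{lem:uniform-converge-Approx}, which asserts exactly that the step-function approximation $\BAP$ converges to $\BP$ in $C_s([0,T];\Sigma(L^2[0,1]))$. The second term tends to $0$ by the standing hypothesis that $\BSP$ converges strongly to $\BP$ in the same space, which, unwinding the definition of convergence in $C_s([0,T];\Sigma(L^2[0,1]))$, means precisely that $\sup_{t\in[0,T]}\|\BSPt{t}\Fx - \BP_t\Fx\|_2 \to 0$. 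Passing to the limit $N\to\infty$ in the supremum bound then yields the claimed convergence.

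This argument presents no genuine obstacle, since the two substantive inputs, namely the uniform-in-$t$ strong convergence of $\BAP$ to $\BP$ and of $\BSP$ to $\BP$, are already in hand from Lemma \ref{lem:uniform-converge-Approx} and from the hypothesis, respectively. The only point that requires care is to observe that both convergences hold in the topology of $C_s([0,T];\Sigma(L^2[0,1]))$, i.e.\ with the supremum over $t$ already taken inside the limit, rather than merely pointwise in $t$; this is exactly what permits the triangle inequality to be applied under a single supremum, so that no separate equicontinuity or uniform-boundedness argument is needed to upgrade pointwise-in-$t$ convergence to uniform convergence.
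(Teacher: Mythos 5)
Your proof is correct and is essentially identical to the paper's own argument: both insert the common limit $\BP_t$ via the triangle inequality, bound the supremum over $t\in[0,T]$ by the sum of the two suprema, and then invoke Lemma \ref{lem:uniform-converge-Approx} for the approximation term and the strong-convergence hypothesis in $C_s([0,T];\Sigma(L^2[0,1]))$ for the other. Your closing remark about both convergences already being uniform in $t$ correctly identifies why no additional equicontinuity argument is needed here (that work is done once, inside Lemma \ref{lem:uniform-converge-Approx}).
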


Let $Ricc(\BA, \BB, \BC, \BPZ)$ denote the  Riccati equation 
in \eqref{equ: Riccati-Equation} with initial condition $\BPZ$.

\begin{assumption}\label{ass:strong-convergence-in-Riccati}
~
	\begin{enumerate}
		\item For any $ N \geq 1$, 
$\BSPZ\in \mathcal{L}(L^2[0,1])$ is Hermitian and non-negative, and $\BSC\in \mathcal{L}(L^2[0,1])$.

 \item The system sequence $\{(\BSA,\BSB \}$ converges to $(\BA;\BB)$ as in \textup{Definition \ref{def:system-convergence}}.

 \item The sequences  $\{\BSC\}$ and $\{\BSPZ\}$ converge strongly to  $\BC$ and 
 $\BPZ$, respectively, as $N\rightarrow \infty$.
 
 \item $\BC$ and $\BSC$ are self-adjoint linear operators.  
	\end{enumerate}
	
\end{assumption}
 
\begin{theorem}
	
Consider a sequence of network systems $\{(\alpha_N I + A_N;\beta_N I + B_N)\}_{N=1}^{\infty}$ with $\{(\BSA; \BSB) \in \mathcal{G}^1_\mathcal{{AI}} \times \mathcal{G}^1_\mathcal{{AI}}  \}_{N=1}^{\infty}$  as the equivalent representation. 
	Let $\BP$ and $\BSP$ be the solutions to $Ricc(\BA, \BB, \BC, \BPZ)$ and $Ricc(\BSA, \BSB, \BSC, \BSPZ)$ respectively.  If \textup{Assumption \ref{ass:strong-convergence-in-Riccati}} holds, then
  	for any horizon $[0,T]$, $T>0$,
  			\begin{equation*}
          \lim_{N\rightarrow \infty} \BSP =\BP \quad \text{ in } C_s([0, T]; \Sigma(L^2[0,1])).
        \end{equation*}
\end{theorem}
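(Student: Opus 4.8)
The plan is to establish a continuous-dependence estimate for the finite-horizon operator Riccati equation directly, by recasting $Ricc$ in integral (mild) form and controlling the difference $\Delta^N_t := \BSPt{t}-\BP_t$ through a Gronwall-type argument in the strong topology. First I would exploit the self-adjointness built into Assumption \ref{ass:strong-convergence-in-Riccati}: every graphon operator is self-adjoint, so $\BA^\TRANS=\BA$, $\BSA^\TRANS=\BSA$, $\BB^\TRANS=\BB$, $\BSB^\TRANS=\BSB$, and by hypothesis $\BC,\BSC$ are self-adjoint; hence $\BB\BB^\TRANS=\BB^2$, $\BC^\TRANS\BC=\BC^2$, and the linear part $\BA^\TRANS\BP+\BP\BA$ acts as the sandwiching semigroup $\BP\mapsto e^{\BA t}\BP e^{\BA t}$. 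Writing the mild form
\[
  \BP_t = e^{\BA t}\BPZ e^{\BA t} + \int_0^t e^{\BA(t-s)}\big[\BC^2 - \BP_s\BB^2\BP_s\big]e^{\BA(t-s)}\,ds,
\]
and the analogous identity for $\BSPt{t}$, I would subtract them. A preliminary step, needed throughout, is a uniform a priori bound: since $\{\BSC\}$ and $\{\BSPZ\}$ converge strongly they are uniformly bounded in operator norm by Banach--Steinhaus, and the non-negative Riccati solutions are dominated by the open-loop cost, which yields $\sup_N\sup_{t\in[0,T]}\|\BSPt{t}\|_{\textup{op}}=:M<\infty$ (and likewise for $\BP$).

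The crucial observation is that the data split into two regimes. The pair $(\BSA;\BSB)$ converges to $(\BA;\BB)$ in operator norm (Definition \ref{def:system-convergence}), and Theorem \ref{theorem: convergence in operator} upgrades this to \emph{operator-norm} convergence of the semigroups, $\sup_{\tau\in[0,T]}\|e^{\BSA\tau}-e^{\BA\tau}\|_{\textup{op}}\to 0$; moreover $\|\BSB^2-\BB^2\|_{\textup{op}}\to 0$. Consequently every term in the telescoped difference that carries a semigroup discrepancy or a $\BSB^2-\BB^2$ factor is bounded, after inserting the uniform bound $M$, by an operator-norm-small quantity times $\|\Fv\|_2$, and contributes a vanishing amount uniformly in $t$. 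By contrast, $\BSC\to\BC$ and $\BSPZ\to\BPZ$ only \emph{strongly}. Here self-adjointness together with the uniform bound gives $\BSC^2\to\BC^2$ strongly, and for the surviving strong terms I would use the standard fact that a uniformly bounded, strongly convergent sequence of operators converges \emph{uniformly on compact sets}: the limit trajectory family $\{e^{\BA(t-s)}\Fv:s,t\in[0,T]\}$ is a continuous image of a compact interval, hence compact, so $(\BSPZ-\BPZ)$ and $(\BSC^2-\BC^2)$ applied along it tend to $0$ uniformly in $(s,t)$.

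What remains is the genuinely coupled bilinear term $\BSPt{s}\BSB^2\BSPt{s}-\BP_s\BB^2\BP_s$. Telescoping toward the limit writes it as $\Delta^N_s\BSB^2\BSPt{s}+\BP_s\BB^2\Delta^N_s$ plus an operator-norm-small remainder (the $\BSB^2-\BB^2$ piece). The term $\BP_s\BB^2\Delta^N_s$ applies $\Delta^N_s$ to the compact family $\{e^{\BA(t-s)}\Fv\}$, and after writing $\BSPt{s}=\BP_s+\Delta^N_s$ the term $\Delta^N_s\BSB^2\BSPt{s}$ splits into a piece where $\Delta^N_s$ again hits a compact limit family and a piece bounded by $\|\Delta^N_s\|_{\textup{op}}\le 2M$ times $\|\Delta^N_s\,e^{\BA(t-s)}\Fv\|_2$. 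Setting $\phi_N(t):=\sup_{\Fw\in\mathcal{K}}\|\Delta^N_t\Fw\|_2$ over a fixed compact set $\mathcal{K}$ gathering the relevant limit trajectories, these estimates assemble into $\phi_N(t)\le \varepsilon_N + C\int_0^t\phi_N(s)\,ds$ with $\varepsilon_N\to 0$ (the strong-on-compacta and operator-norm-small contributions) and $C$ uniform in $N$; Gronwall then forces $\sup_{t\in[0,T]}\phi_N(t)\to 0$, which contains the desired conclusion $\sup_{t}\|\BSPt{t}\Fx-\BP_t\Fx\|_2\to 0$ for every $\Fx\in L^2[0,1]$. An Arzel\`a--Ascoli argument, exactly as in the proof of Lemma \ref{lem:uniform-converge-Approx}, secures the uniformity in $t$ from pointwise-in-$t$ convergence if one prefers to avoid building $\mathcal{K}$ explicitly.

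I expect the main obstacle to be precisely the self-consistency of this Gronwall step for the bilinear term: because $\BSC$ and $\BSPZ$ converge only strongly, $\Delta^N_s$ is \emph{not} small in operator norm, and it gets composed with $N$-dependent, time-varying vectors $\BSB^2\BSPt{s}e^{\BA(t-s)}\Fv$, so a naive estimate in a single fixed direction $\Fv$ does not close. The resolution hinges on combining three facts carefully: the operator-norm (rather than merely strong) convergence of the semigroups supplied by Theorem \ref{theorem: convergence in operator}, the self-adjointness hypotheses that make $\BC^2,\BSC^2$ and $\BB^2,\BSB^2$ well behaved, and the compactness of the limit trajectories that turns strong convergence into convergence uniform in the time parameters. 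Verifying that the auxiliary compact set $\mathcal{K}$ can be chosen invariant under the vector maps generated by the telescoping (or, equivalently, recasting the difference as a uniformly bounded linear evolution family on $\Sigma(L^2[0,1])$ acting on a strongly vanishing source) is the delicate point that the full proof must pin down.
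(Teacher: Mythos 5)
Your preparatory machinery is sound, and it is worth noting that you are attempting something strictly harder than the paper does: the paper's proof is essentially a citation — it observes that Theorem \ref{theorem: convergence in operator} gives convergence of the semigroups $e^{t\BSA}\Fx \to e^{t\BA}\Fx$ uniformly on $[0,T]$, that Definition \ref{def:system-convergence} gives (at least) strong convergence of $\BSB$ to $\BB$, and then invokes \cite[Part IV, Theorem 2.2]{bensoussan2007representation}, whose hypotheses are exactly these together with the strong convergence of $\BSC$ and $\BSPZ$ in Assumption \ref{ass:strong-convergence-in-Riccati}. You instead try to reprove that perturbation theorem directly, and several of your steps are correct and even sharper than what the paper needs: the mild form of the Riccati equation is valid since $\BA$ is bounded and self-adjoint; the uniform bound $\sup_N\sup_{t\in[0,T]}\|\BSPt{t}\|_{\textup{op}}\leq M$ does follow from Banach--Steinhaus plus domination by the zero-control cost; your reading of Theorem \ref{theorem: convergence in operator} as an \emph{operator-norm} estimate is right, because the right-hand sides of \eqref{equ:exp-of-A} and \eqref{equ:exp-of-A+I} are vanishing multiples of $\|\Fx\|_2$; $\BSC^2\to\BC^2$ strongly does follow from uniform boundedness and item 4 of Assumption \ref{ass:strong-convergence-in-Riccati}; and bounded strongly convergent operator sequences do converge uniformly on norm-compact sets.

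However, the step you flag yourself is a genuine gap, not a deferred routine verification, and as proposed it would fail. The Gronwall inequality for $\phi_N(t)=\sup_{\Fw\in\mathcal{K}}\|(\BSPt{t}-\BP_t)\Fw\|_2$ closes only if $\mathcal{K}$ is compact and essentially invariant under the vector maps $w\mapsto \BB^2\BP_s e^{\BA\tau}w$ generated by your telescoping; iterating these maps from a single $\Fv$ produces a hierarchy $\mathcal{K}_0,\mathcal{K}_1,\ldots$ of compact sets whose norms grow like $\kappa^k$ with $\kappa = c_{_B}^2 M e^{cT}$, which typically exceeds one, so $\overline{\bigcup_k \mathcal{K}_k}$ need not even be bounded and no invariant compact set exists in general. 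Self-adjointness does not rescue this: flipping $\langle \Delta^N_s x_N, e^{\BA(t-s)}\Fw\rangle = \langle x_N, \Delta^N_s e^{\BA(t-s)}\Fw\rangle$ moves $\Delta^N_s$ onto vectors ranging over the image of the whole unit ball in $\Fw$, which is not compact, so one recovers an operator-norm quantity and the source terms built from $\BSPZ-\BPZ$ and $\BSC^2-\BC^2$ are \emph{not} small in operator norm. The standard repair — and in essence the proof of the theorem the paper cites (cf. Gibson's approximation theory for Riccati integral equations) — is successive approximation rather than a direct Gronwall on the true solutions: run the Picard iteration for the mild Riccati equation simultaneously for all $N$; by induction on the iterate $k$, every $N$-dependent vector to which a strongly convergent operator difference is applied is itself norm-convergent uniformly in the time parameters (this is what fails for the true solutions, whose coefficients $\BSPt{s}$ are not yet known to converge), so the elementary lemma ``$\BT_N\to\BT$ strongly, $x_N\to x$ in norm, $\sup_N\|\BT_N\|_{\textup{op}}<\infty$ imply $\BT_N x_N\to\BT x$'' closes each stage; the iteration is a contraction on subintervals of length depending only on $M$ and the uniform bounds, uniformly in $N$, so a three-epsilon argument transfers convergence to the fixed points and concatenation covers $[0,T]$. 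With your a priori estimates already in place this substitution completes the argument; alternatively, simply verify the hypotheses and cite \cite[Part IV, Theorem 2.2]{bensoussan2007representation}, as the paper does.
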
	
\begin{proof}
 From Theorem \ref{theorem: convergence in operator}, we know 
 for all $T>0$ and all $\Fx\in L^2[0,1]$,
  	$\lim_{N\rightarrow \infty}e^{t \BSA}\Fx= e^{t \BA}\Fx$ uniformly in $[0,T]$. 
	 Since the system sequence $\{(\BSA,\BSB) \}$ converges to $(\BA;\BB)$ as in Definition \ref{def:system-convergence}, $\{\BSB\}$ converges to $\BB$ in the strong operator sense.
We can now apply \cite[Theorem 2.2, Part IV]{bensoussan2007representation}, specialized to the Hilbert space $L^2[0,1]$. Since its hypotheses are then satisfied in the present case, the desired result follows. 
\end{proof}


	Let  $\BSP$ denote the solution to the Riccati equation for $(\BSA; \BSB)$ that converges strongly to the solution $\BP$ of the Riccati equation for $(\BA;\BB)$. Let $\BAP$ be the step function approximation of $\BP$  generated  via the $N$ uniform partition of $[0,1]$ according to \eqref{equ:approximate-Riccati-Solution}.
	\begin{theorem}[Appendix \ref{sec:Proofs for  Graphon-LQR}] \label{thm: Convergence of the States} %
	Consider the time horizon $[0, T]$.  Assume the sequence of initial conditions $\{\SxZ \in L^2[0,1]\}$ is convergent and \textup{Assumption \ref{ass:strong-convergence-in-Riccati}} holds.
	Let the optimal linear quadratic control law for $(\BSA; \BSB)$  be generated by 
  \begin{equation}\label{equ:optimal-control}
    \Fu^{N*}_\Ft= -\BSB^\TRANS\BSPt{(T-t)}\Fx^{N*}_\Ft,
  \end{equation}
	where the optimal state trajectory is given by $\Fx^{N*}$,
	and let the graphon approximate control law for $(\BSA; \BSB)$ be given by 
\begin{equation}
  \Sut= -\BSB^\TRANS\BAPt{(T-t)}\Sxt, 
\end{equation}
 where the corresponding state trajectory is given by $\Sx$.
	Then 
	$$
		\forall t \in [0, T], \quad \lim_{N \rightarrow \infty} \left\|\Fx^{N*}_\Ft - \Sxt\right\|_2 =0,
	$$
	and 
   $ \lim_{N\rightarrow \infty} \left|J(\Fu^{N*}) - J(\Su)\right|=0.$
	\end{theorem}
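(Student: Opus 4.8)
The plan is to establish the two conclusions---uniform convergence of the state trajectories and convergence of the costs---by a sequence of three approximations, isolating the effect of (i) replacing the true Riccati solution $\BP$ by its finite-network counterpart $\BSP$, (ii) replacing $\BSP$ by the step-function approximation $\BAP$, and (iii) the mismatch between the finite and limit dynamics. First I would compare the two closed-loop trajectories $\Fx^{N*}$ and $\Sx$, which satisfy the same system operator $(\BSA;\BSB)$ but different feedback gains, $\BSB^\TRANS\BSPt{(T-t)}$ versus $\BSB^\TRANS\BAPt{(T-t)}$. Writing both in mild-solution (integral) form and subtracting, I would bound the difference using the semigroup bound from Lemma~\ref{lem:exp-of-I+A}, the boundedness of $\BSB$, and the gain difference $\|(\BSPt{(T-t)}-\BAPt{(T-t)})\Fx\|_2$. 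The previous Lemma~\ref{lem:Convergnece of Approximated Riccati Solution} gives exactly that this gain difference tends to zero uniformly in $t$, so a Gr\"onwall-type argument on $[0,T]$ would close this step and yield $\lim_{N}\sup_{t}\|\Fx^{N*}_\Ft-\Sxt\|_2=0$ once the inputs and initial conditions are handled.

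The key technical ingredient is to control $\|\BSPt{t}\Fx - \BAPt{t}\Fx\|_2$ uniformly, and this is where I would lean on the two Riccati-approximation results already proved: Lemma~\ref{lem:uniform-converge-Approx} gives $\BAP\to\BP$ strongly and uniformly in $t$, while the preceding theorem gives $\BSP\to\BP$ in $C_s([0,T];\Sigma(L^2[0,1]))$; combining these by the triangle inequality yields $\BAP-\BSP\to 0$ strongly and uniformly, which is precisely Lemma~\ref{lem:Convergnece of Approximated Riccati Solution}. I would therefore structure the trajectory estimate so that the only $N$-dependent error entering the Gr\"onwall bound is this quantity evaluated along the (bounded) trajectory, together with the assumed convergence of the initial conditions $\SxZ$ and the convergence of the operators $\BSB$ in the strong sense from Definition~\ref{def:system-convergence}. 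A subtlety is that strong operator convergence does not give norm convergence, so every application must be on a fixed element of $L^2[0,1]$; I would keep the trajectories bounded uniformly in $N$ (again via Lemma~\ref{lem:exp-of-I+A}) so that strong convergence can be invoked on a precompact set of arguments.

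For the cost convergence $\lim_{N}|J(\Fu^{N*})-J(\Su)|=0$, I would expand both quadratic functionals and exploit the trajectory convergence just established together with the assumed strong convergence $\BSC\to\BC$ and $\BSPZ\to\BP_0$ in Assumption~\ref{ass:strong-convergence-in-Riccati}. The running-cost terms $\|\BSC\Fx_\tau\|_2^2$ and the terminal term $\langle\BSPZ\Fx_\FT,\Fx_\FT\rangle$ are continuous in the state under strong operator convergence of $\BSC,\BSPZ$, provided the states converge in $L^2[0,1]$ and remain uniformly bounded; the input term follows because $\Su=-\BSB^\TRANS\BAP\Sx$ and $\Fu^{N*}=-\BSB^\TRANS\BSP\Fx^{N*}$ differ by a term controlled by the same gain-difference and trajectory-difference estimates. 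Dominated convergence over the fixed interval $[0,T]$ then passes the limit through the time integral.

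The main obstacle I anticipate is the interplay of strong-operator (rather than norm) convergence with the nonlinear, time-varying feedback loop: the feedback gain depends on $N$ both through $\BSB$ and through the Riccati solution, so the trajectory difference cannot be bounded by a single operator-norm estimate, and I must argue pointwise along trajectories while simultaneously proving those trajectories converge---a mild circularity that the Gr\"onwall inequality resolves only if the driving error term is genuinely $N$-dependent-and-vanishing, independent of the unknown limit trajectory. Establishing a uniform-in-$N$ a priori bound on $\sup_{t}\|\Fx^{N*}_\Ft\|_2$ and $\sup_{t}\|\Sxt\|_2$ before running the comparison is therefore the crucial preliminary step, and I would derive it from the exponential semigroup bounds and the uniform boundedness of the Riccati solutions guaranteed by Proposition~\ref{prop:Riccati-Sol}.
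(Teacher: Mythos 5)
Your proposal follows essentially the same route as the paper's proof: form the error equation between the two closed loops, with driving term $\BSB\BSB^\TRANS\left(\BSPt{(T-t)}-\BAPt{(T-t)}\right)\Fx^{N*}_\Ft$, close it with the Gr\"onwall--Bellman inequality, let Lemma~\ref{lem:Convergnece of Approximated Riccati Solution} supply the vanishing gain mismatch (handling the moving argument by splitting through the fixed limit trajectory, just as the paper inserts $\Fx^*$), and obtain the cost convergence by expanding the quadratic functionals under uniform bounds and the strong convergence of $\BSC$ and $\BSPZ$. One small correction: the uniform-in-$N$ bounds on $\BSB$, on the Riccati solutions, and on the closed-loop generators come from the Uniform Boundedness Principle applied to the strongly convergent families (the paper's Lemma~\ref{lem:uniform-bound-for-strong-convergence}), not from Proposition~\ref{prop:Riccati-Sol}, which only concerns a single Riccati equation.
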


\section{Numerical Examples}
\subsection{Convergent Network Sequences with Sampled Weightings} \label{subsec:network-sampling}
The generation of a randomly sampled network of size $N$ from a graphon $\FA$ is specified as follows:
\begin{enumerate}[1)]
	\item Sample $N$  points from a uniform distribution in  $[0,1]$. 
Sort the sample points in the decreasing order of their values and label them from node $1$ to node $N$. 
 Denote  the node set by $V_N$ and the value of node $i \in V_N$ by $v_i$. 
 \item Connect the nodes $i,j \in V_N $   with edge weight $\FA(v_i,v_j)$  to generate the network $G_N$.  Then $A_{Nij} = \FA(v_i,v_j)$ is the $ij^{th}$ element of the adjacency matrix of $G_N$.
\end{enumerate}

If $\FA$ is almost everywhere continuous, then the step function $\SA$ of $A_N =[A_{Nij}]$ converges to $\FA$ in the $\delta_1$ metric as $N\rightarrow \infty$ (see e.g. \cite{borgs2011limits}), that is, $\delta_1({\SA, \FA}) \rightarrow 0,  \text{ as } N \rightarrow \infty.$
%
 Furthermore, this implies
$ 
\delta_2(\SA, \FA)  \rightarrow 0, \text{ as } N\rightarrow \infty$  since $\FA \in \ESO$ is uniformly bounded. By the generation procedure, we obtain the labeling that approximates the minimum distance between the network and the limit, and hence the sequence of networks converge in the $L^2[0,1]^2$ metric to the limit.

As an example, we consider the following sinusoidal graphon $\FA$: for all $x,y \in [0,1],$ $$\FA(x,y)= 0.5\cos(2\pi(x-y))+0.25\cos(4\pi(x-y)). $$ The normalized eigenfunctions are  $ \Ff_1= \sqrt{2} \cos 2\pi(\cdot) $,  $\Ff_2= \sqrt{2} \cos 4\pi(\cdot),$
  $\Ff_3= \sqrt{2} \sin 2\pi(\cdot) $ and $\Ff_4= \sqrt{2} \sin 4\pi(\cdot)$
with  eigenvalues $\lambda_1= \lambda_3 = \frac{1}{4}$ and $\lambda_2= \lambda_4 =\frac{1}{8}.$
\subsection{Minimum Energy Graphon State-to-state Control}
Consider a network system evolving according to node averaging dynamics with $G_N$  describing the dynamic interactions. Suppose each node has an independent input channel. 
Denote the system by $(A_N; I_N)$, where  $A_N$ is the adjacency matrix of $G_N$ and $I_N$ is the identity input mapping.  
The network system $(A_N; I_N)$ with node averaging dynamics is therefore described by 
\begin{equation}
  \dot{x}^i_t=\frac{1}{N}\sum_{j=1}^{N} A_N{_{ij}}x^j_t+u^i_t, \hspace{0.2cm} x^i_t, u^i_t \in R,  i\in\{1,...,N\},
\end{equation} 
where $A_N{_{ij}}$ is sampled from the sinusoidal graphon. 

We solve the minimum energy control problem of driving the states of the network system $(A_N; I_N)$ to a terminal state $x^N_T$ from the origin over the time horizon $[0,T]$ with $T=2$.  
Here we consider the limit target terminal state
$
\Fx_\FT= \frac{1}{\sqrt{2\pi}} e^{-50 (\alpha-0.5)^2}, \alpha \in[0,1].
$

Based on Proposition \ref{prop:inverse of the controllability Gramian operator}, the system $(\FA; \BI)$ is exactly controllable and the inverse of the controllability Gramian operator is explicitly given by \eqref{equ:gramian-inverse}.
 The minimum control law based on \eqref{equ:mm-optimal-control-law} %
 is explicitly given by
 \begin{equation*} 
  \begin{aligned}
  \Fu_\Ft& =  \frac{1}{T} \Fx_\FT \\
  &+ \frac{1}{T}\sum_{\ell=1}^4 {\Ff_\ell} \langle \Fx_\FT, \Ff_\ell \rangle  \Big[-\frac{2 \lambda_\ell \left(\frac{e^{2 \lambda_\ell
       T}-1}{2 \lambda_\ell}-T\right) \Big(e^{\lambda_\ell (T-t)}-1\Big)}{e^{2 \lambda_\ell
       T}-1}\\
       & + \left(e^{\lambda_\ell (T-t)} -1\right) -\frac{2 \lambda_\ell \left(\frac{e^{2 \lambda_\ell T}-1}{2
       \lambda_\ell}-T\right)}{e^{2 \lambda_\ell T}-1}\Big],~~  t\in[0, T],
  \end{aligned}
\end{equation*}
with 
  $\langle \Fx_\FT, \Ff_1 \rangle = -0.116, ~ \langle \Fx_\FT, \Ff_2 \rangle = 0.064,~
    \langle \Fx_\FT, \Ff_3 \rangle =\langle \Fx_\FT, \Ff_4 \rangle = 0.$
Then the control law $u^N_{(\cdot)}$ for a network system $(A_N; I_N)$ generated based on the  approximation in \eqref{equ:mm-control-approximation}.
The error $\|{\Fx_\FT(\Fu)}-{\Fx^\FN_\FT(\Fu^{[\FN]})}\|_2$ is bounded as in Theorem \ref{thm:mm-graphon-main} and converges to 0 as $N\rightarrow \infty$. The result of a simulation for a network system with 100 nodes using the GSSC strategy is shown in Figure \ref{fig:Minimum-Energy-State-to-state-Control-Example}.
\begin{figure}[h]
\centering
  \subfloat[A network of size 100 in a sequence which converges to the sinusoidal graphon limit $\FA$]
  {\includegraphics[width=2.3cm]{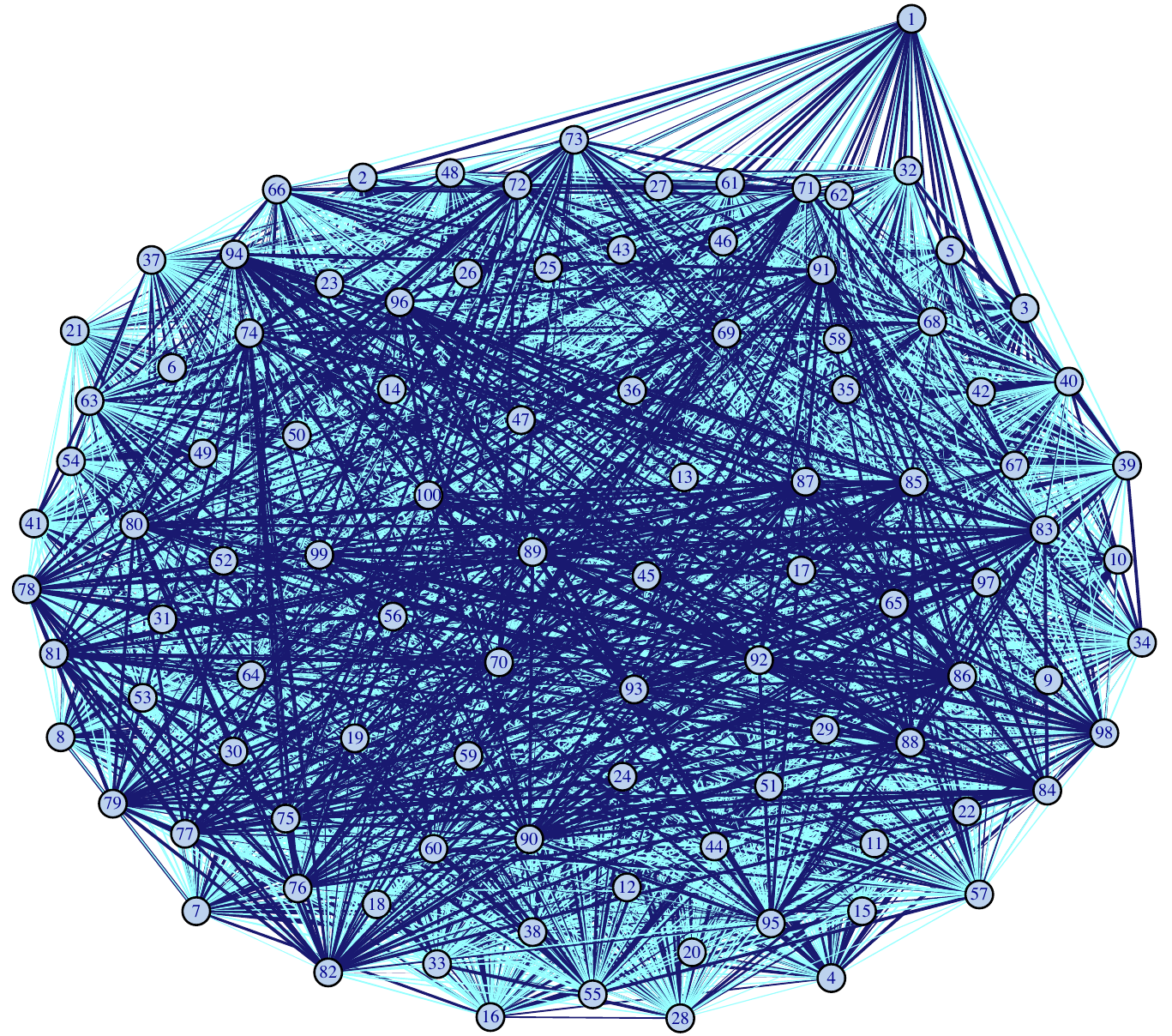}\qquad \includegraphics[width=2.5cm]{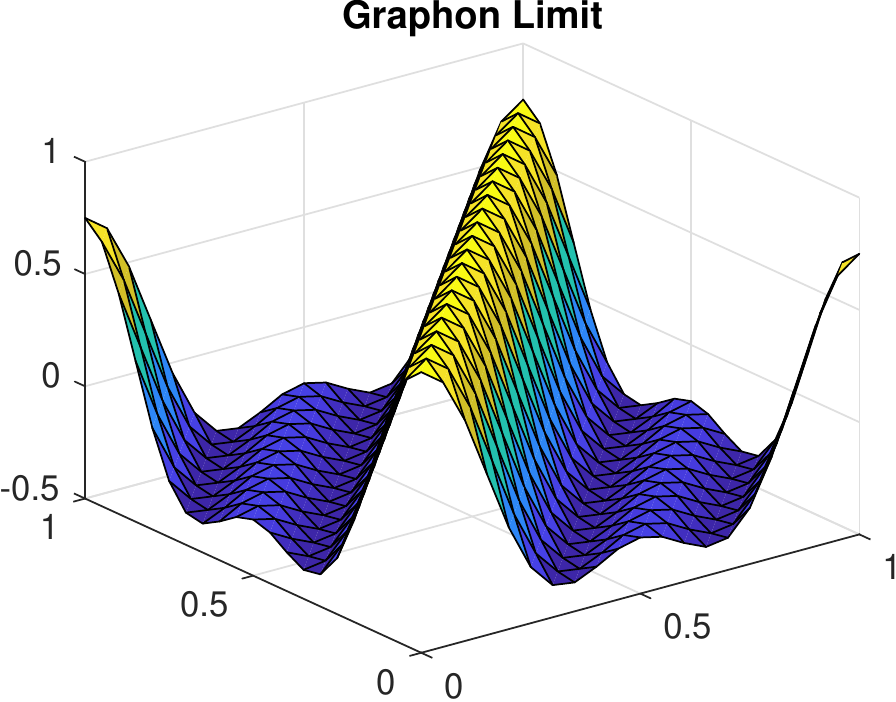}}\\
\subfloat[Target state, achieved terminal state and terminal state error]{\includegraphics[width=2.5cm]{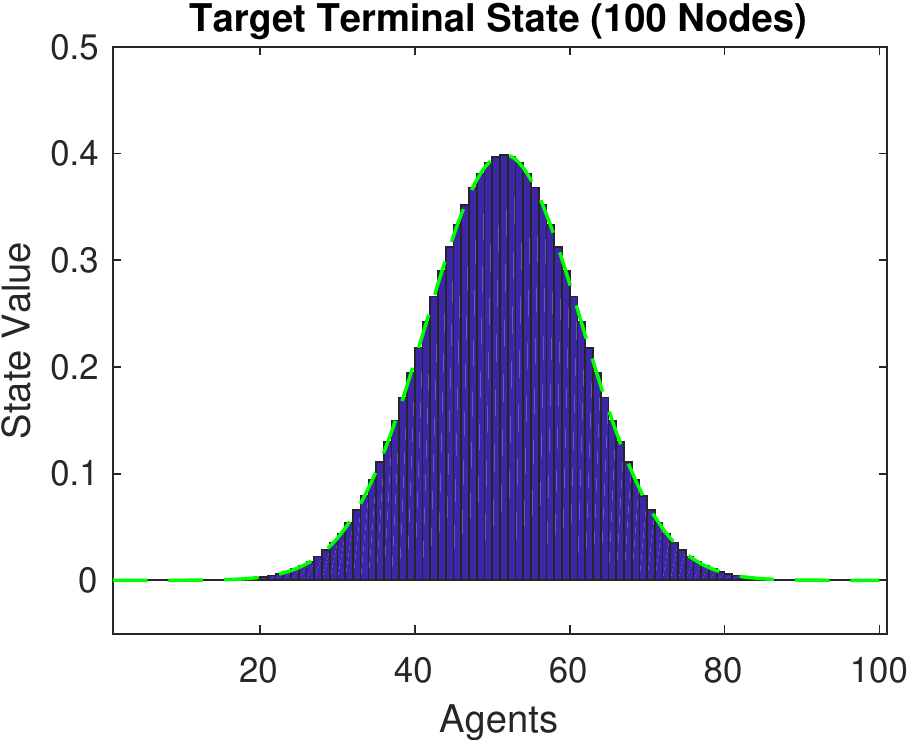}\includegraphics[width=2.5cm]{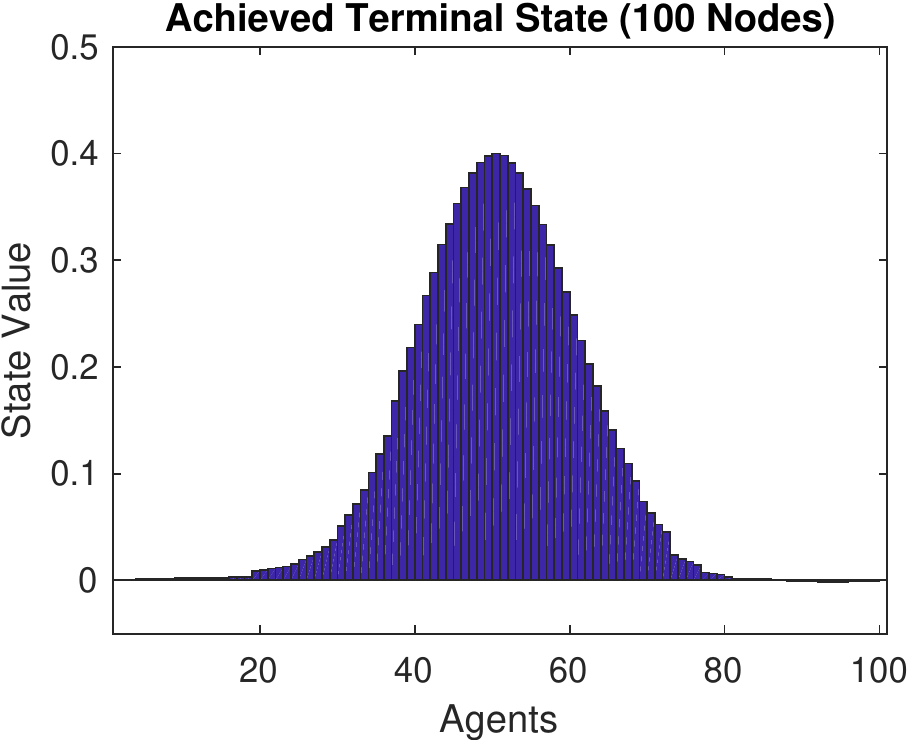}\includegraphics[width=2.5cm]{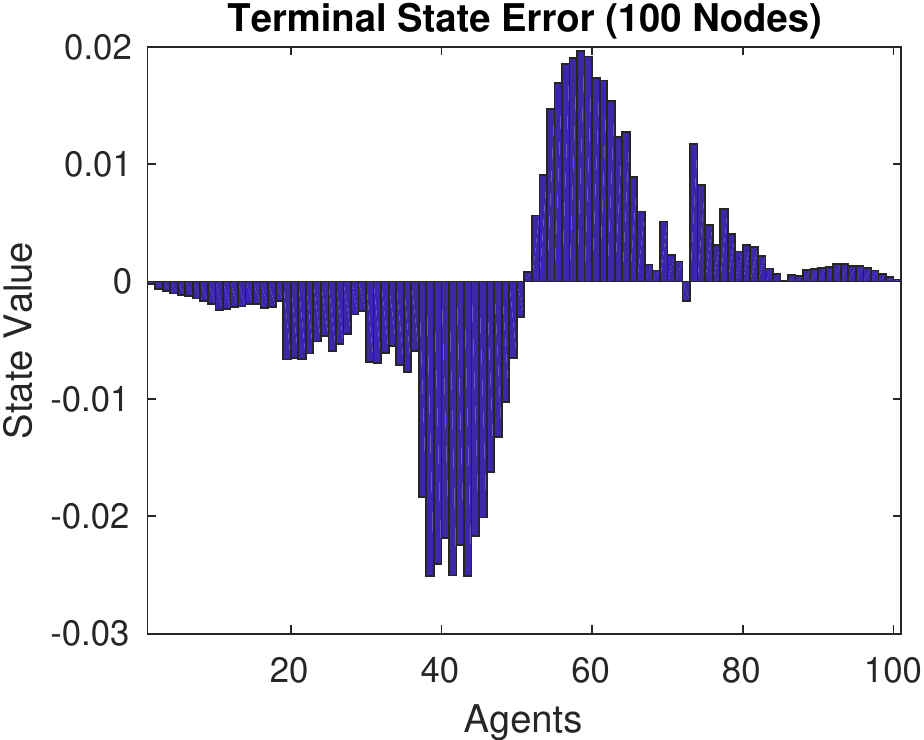}} 
\\
	\subfloat[State trajectory, control signal and its deviation from the optimal control signal]
	{\includegraphics[width=2.5cm]{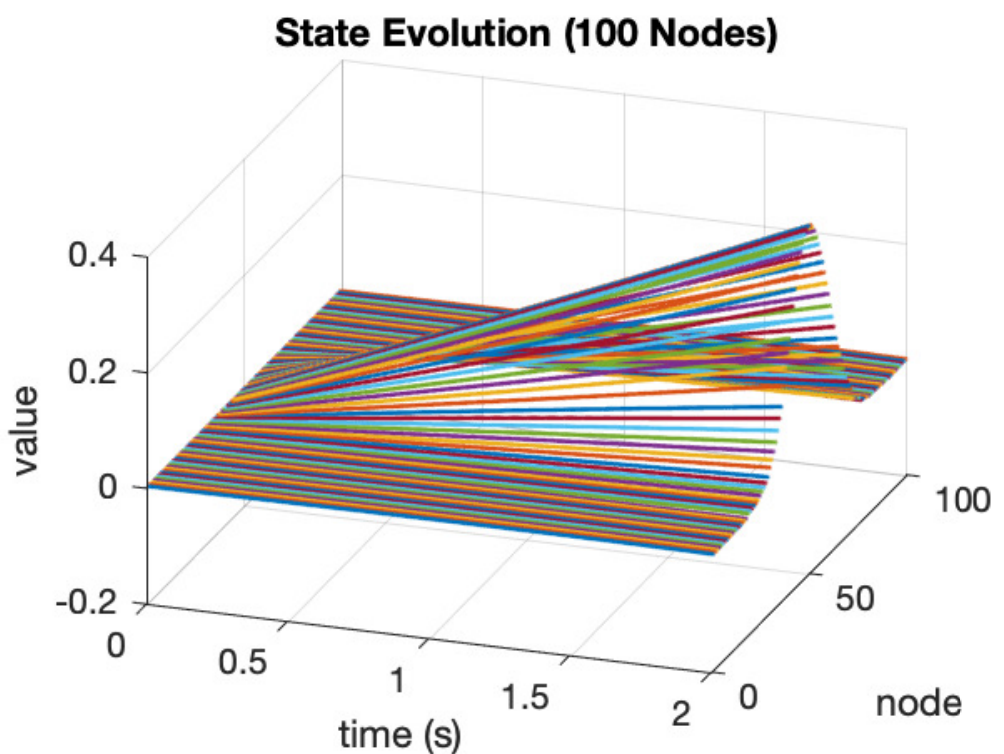} \includegraphics[width=2.5cm]{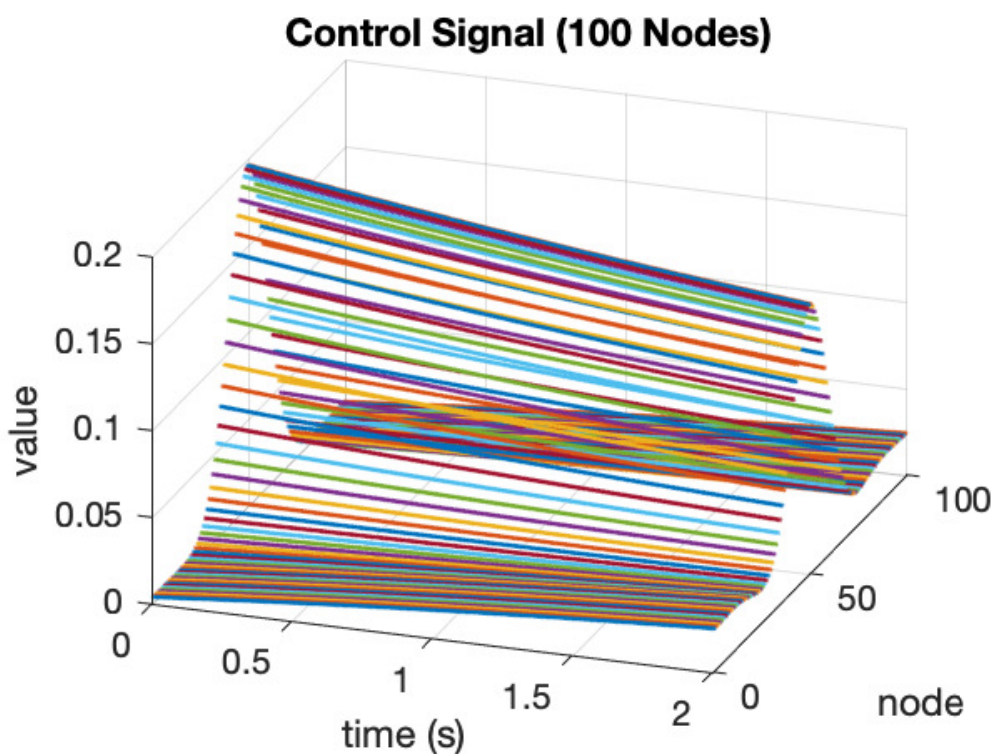}\includegraphics[width=2.5cm]{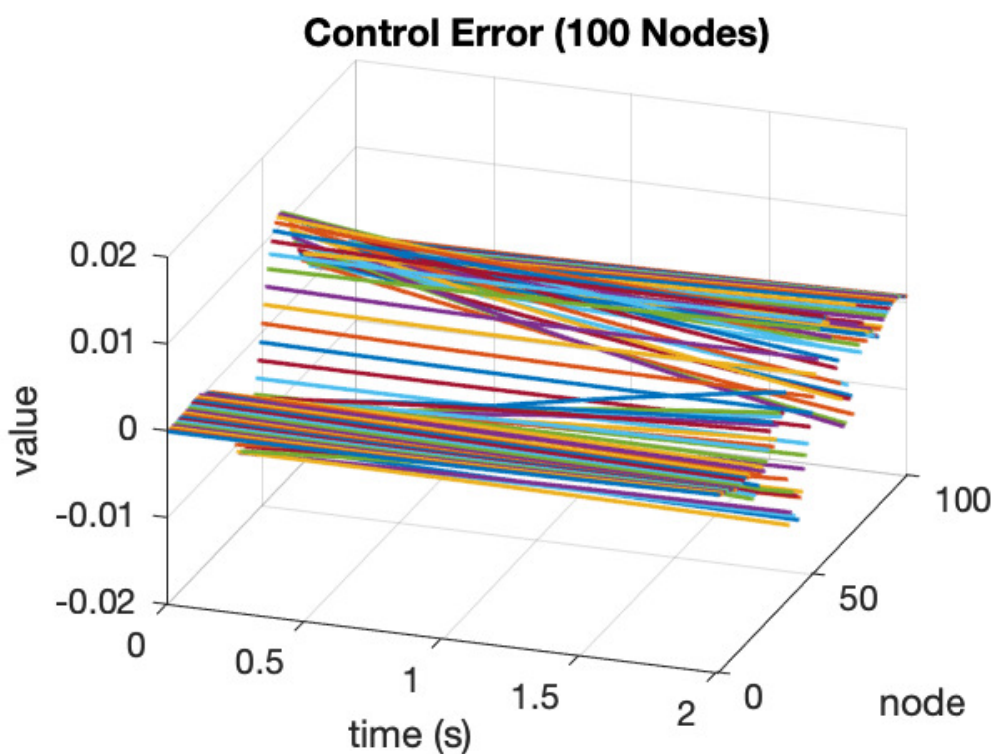}} 
	\caption{Minimum energy graphon state-to-state control} \label{fig:Minimum-Energy-State-to-state-Control-Example}
\end{figure}

\subsection{Graphon-Network LQR}
Consider a network system with dynamics given by 
\begin{equation}
  \dot{x}_t^i = \alpha x_t^i + \frac{1}{N}\sum_{j=1}^{N} A_{N_{ij}}x_t^j + \beta u_t^i,\quad x_t^i, u_t^i \in \BR .
\end{equation}
The objective is to design a control law that minimizes the following cost with network coupling:
\begin{multline}
  J(u) = \frac1N\sum_{i=1}^N \Big[ \int_0^T\Big(q(x_t^i - \frac{\eta}{N}\sum_{j=1}^N A_{N_{ij}} x^j_t)^2 + (u_t^i)^2\Big) dt \\
  + q_T\Big(x_T^i - \frac{\eta}{N}\sum_{j=1}^N A_{N_{ij}} x^j_T\Big)^2\Big],
\end{multline}
where $q,q_T\geq 0$.
That is we want to regulate the state of each subsystem to be close to the local weighted network average with small control effort. 
The equivalent formulation of this problem for the graphon system following \eqref{equ:step-function-dynamical-system} is given by 
\begin{equation}
\begin{aligned}
  &\DSxt = \alpha \Sxt + \SA \Sxt + \beta \Sut\\
  &J(\Su) = \int_0^T \Big[q \left\|(\BI - \eta \SA)\Sxt\right\|_2^2 + \left\|\Sut\right\|_2^2 \Big]dt \\
  & \qquad \qquad \qquad \qquad\qquad + q_T \left\|(\BI - \eta \SA)\SxT\right\|_2^2,
\end{aligned}
\end{equation}
where $ \Sxt, \Sut \in L_{pwc}^2[0,1], \SA \in \ESO$. 
The limit problem (if exists) or the approximate problem is given by 
\begin{equation}
\begin{aligned}
  &\dot{\Fx}_\Ft = \alpha \Fx_\Ft + \FA \Fx_\Ft + \beta \Fu_\Ft\\
  &J(\Fu) = \int_0^T \Big[q \left\|(\BI - \eta \FA)\Fx_\Ft\right\|_2^2 + \left\|\Fu_\Ft\right\|_2^2 \Big]dt \\
  & \qquad  \qquad \qquad \qquad\qquad + q_T \left\|(\BI - \eta \FA)\Fx_\FT\right\|_2^2,
\end{aligned}
\end{equation}
where $\FA \in \ESO$, $\Fx_\Ft, \Fu_\Ft \in L^2[0,1]$.
Let us consider a special case where $\FA$ permits an exact finite spectral decomposition as follows:
\begin{equation} 
  \FA(x,y) = \sum_{\ell=1}^{d} \lambda_\ell \Ff_\ell(x) \Ff_\ell(y), \quad (x,y)\in[0,1]^2,
\end{equation}
where $\lambda_1, \lambda_2,...,\lambda_d$ are non-zero eigenvalues of $\FA$ and $\Ff_1, \Ff_2,...,\Ff_d$ represent orthonormal eigenfunctions. 
Then the solution $\BP$ to the Riccati equation
 \begin{equation}
 \begin{aligned}
      & \dot{\BP}_t = (\alpha\BI+ \FA)^\TRANS \BP_t +  \BP_t(\alpha\BI+ \FA) - \beta^2 (\BP_t)^2 + q(\BI-\eta \FA)^2 \\
   & \BP_0= q_T(\BI-\eta \FA)^2
 \end{aligned}
 \end{equation}
 is given by 
$
  \BP_t = \breve \Pi_{t} \BI + \sum_{\ell=1}^d\left(\Pi_t^\ell - \breve \Pi_t\right)\Ff_\ell \Ff_\ell^\TRANS, 
$
where $\breve \Pi$ and $\Pi^\ell$ are the solutions to the following scalar Riccati equations
\begin{equation}
\begin{aligned}
  &\dot{ \breve \Pi}_t = 2 \alpha \breve \Pi_t -\beta^2  (\breve \Pi_t)^2 + q, \\
  & \dot{\Pi}^\ell_t = 2 (\alpha + \lambda_\ell)  \Pi_t^\ell -\beta^2  ( \Pi_t^\ell)^2 + q(1-\eta\lambda_\ell)^2, \\
  & \breve \Pi_0 = q_T,\quad  \Pi_0^\ell = q_T(1-\eta\lambda_\ell)^2,\quad  1 \leq \ell \leq d.
\end{aligned}
\end{equation}
The optimal control  for the limit problem is then given by 
\begin{equation*}
\begin{aligned}
   \Fu_\Ft  %
          & = - \beta \breve \Pi_{(T-t)} \Fx_\Ft - \beta \sum_{\ell =1}^d (\Pi_{(T-t)}^\ell-\breve \Pi_{(T-t)})  \langle\Fx_\Ft,\Ff_\ell\rangle \Ff_\ell.  \\
\end{aligned}
\end{equation*}
See \cite{ShuangAdityaCDC19,ShuangPeterCDC19W2} for the details of the solution method, which provides solutions to a more general class of graphon control problems with network couplings in states, controls and costs.

{If $\SA \rightarrow \FA$ as $N\rightarrow \infty$ in $L^2[0,1]^2$,} then all the conditions in Assumption \ref{ass:strong-convergence-in-Riccati} are satisfied. Hence one can generate approximate control for the original network system. 

Consider the following parameters: $\alpha =2$, $\beta=1.5$, $q=3$, $q_T=7$, $\eta=3$, $ n=1$ and $N=100$.
The numerical example is shown in Figure \ref{fig:graphon-lqr}. 

A direct solution to the original $N$-dimensional network LQR problem involves solving an $N\times N$ dimensional Riccati equation. However, the graphon approximate control method here involves only solving $d+1$ scalar Riccati equations, where $d$ is the number of non-zero eigenvalues of the graphon limit $\FA$. If the network is extremely large in size and the limit $\FA$ permits simple spectral representations, then the graphon control method would significantly reduce the computation complexity.

\begin{figure}[!t]
\centering
  \subfloat[This is a network of size 100 in a sequence that converges to the graphon $\FA$. This figure illustrates the structure, spectral properties and the step function representation.]{\includegraphics[width=2cm]{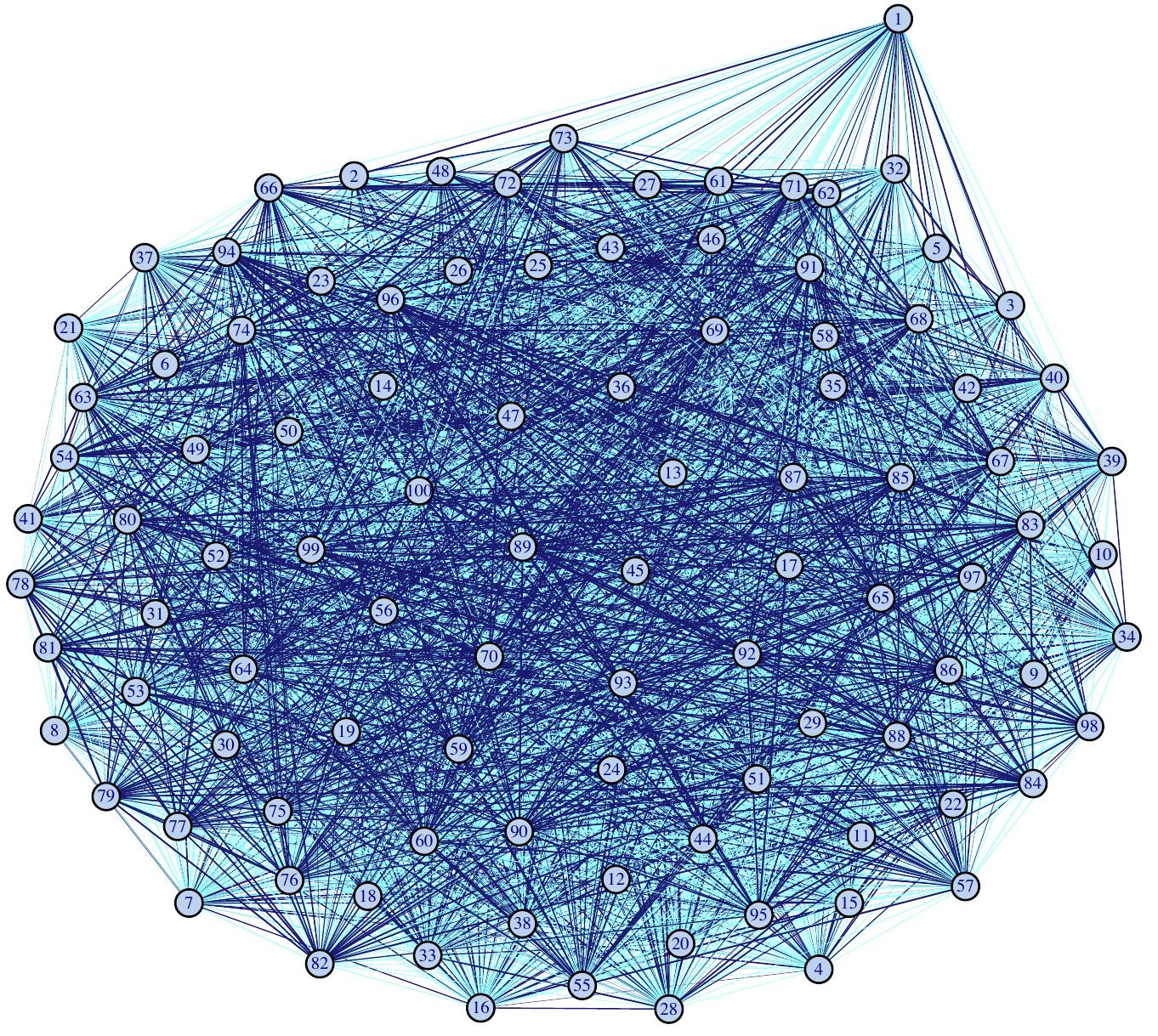}\includegraphics[width=6cm]{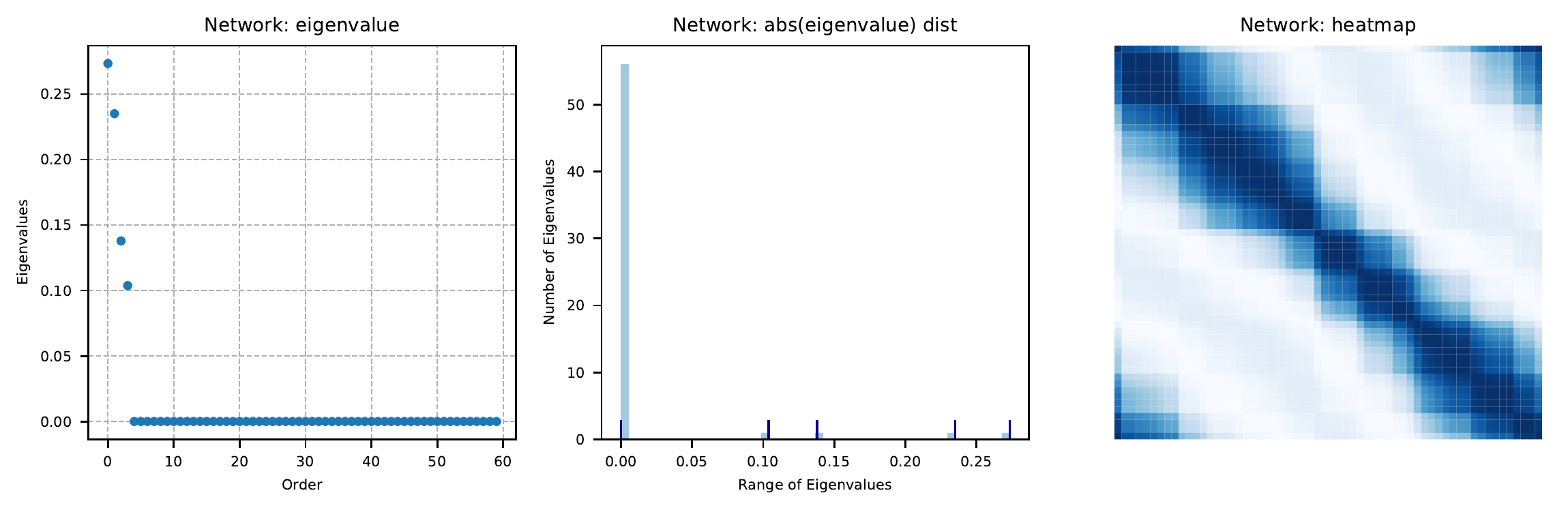}}\\
  \subfloat[Graphon approximate control and optimal control]{\includegraphics[width=8cm]{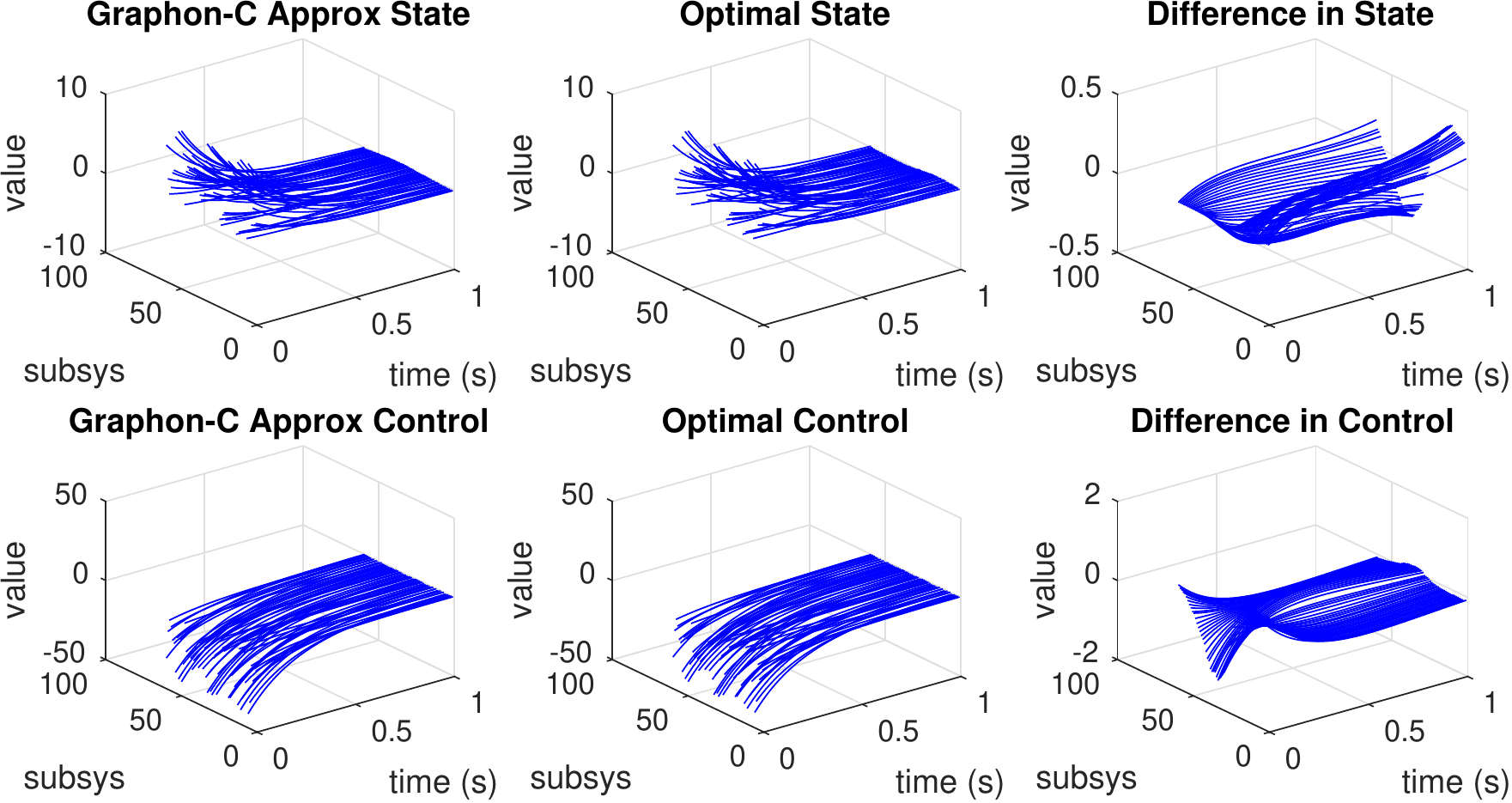}}
  \caption{Graphon approximate control and optimal control are applied to a network of size 100 in the sequence converging to the graphon limit. 
  With the graph interpreted as an $L^2[0,1]^2$ function, the distance between the graph and the graphon limit is  $0.061$ in the operator norm  and is $0.109$ in the $L^2[0,1]^2$ norm. The maximum trajectory difference from the optimal control is less than $3.320\%$ of the maximum of initial states. The graphon approximate control cost is $0.087\%$ higher than the optimal LQR control cost. }
  \label{fig:graphon-lqr}
\end{figure}

 


\section{Discussion}
The basic assumptions justifying the application of graphon control strategies are, first, that a given sequence of finite network systems converges to a unique limit graphon system (as in Definition \ref{def:system-convergence}) or that a given instance can be closely approximated by a graphon system, along with the  measure preserving bijections that achieve the best fit,  and second, that the corresponding control problem for the (limit) graphon system is tractable.
Under these assumptions, Theorems \ref{thm:mm-graphon-main} and \ref{thm: Convergence of the States} guarantee the effectiveness  graphon control strategies for the finite large-scale complex network systems.

A plausible empirical approach to model the required infinite limit graphon $G_{\infty}$ is to fit two dimensional Fourier series to the step function representation of the adjacency matrix. Such parametric modelling of empirical data could resemble parametric estimation in statistics and system identification.  Moreover, due to the compactness of graphon operators, representations or approximations by simple spectral decomposition are possible \cite{ShuangPeterCDC19W1} and will be analyzed in future work.

The generation of the graphon approximation models inevitably deals with relabellings. 
Although in the graphon control design methodology we do not restrict the labeling to be that of the best fit to the data, the control error still depends on the labeling of the nodes. 
Furthermore, the labeling of the nodes on the networks  is necessary for control implementation. 
 To find the best labellings for general graphs can be  a complex combinatorial task. 
Consequently, we underline that it is assumed in this paper that the best labeling is known beforehand, either through a specific way of growing the networks with labels that ensure the best fit to the limit  or through graphon estimation methods \cite{airoldi2013stochastic}.  %

\section{Conclusion}

We propose a method to approximately control networks of linear systems using the inherent limit described by graphons.  Important aspects requiring further investigations include: (1) the application of the proposed limit graphon control strategy to asymmetric network systems where the interactions of dynamics are described by directed networks; 
(2) the creation of an equivalent theory for sparse networks to the dense case developed here; (3)  the generation of a methodology for systematically fitting bivariate  analytic models to network data; (4) the application of graphon control to stochastic linear quadratic Gaussian problems;  (5) the analysis of decentralized graphon control via Mean Field Game theory \cite{PeterMinyiCDC18GMFG}; (6) the graphon control analysis to problems with non-symmetric local dynamic such as harmonic oscillator dynamics \cite{ShuangPeterTAC19W2}.

\appendices 
\section{Lemmas \ref{lem: stepfunction-operating-on-L2-functions}-\ref{lem:graphon-exp-norm}}
\begin{lemma} \label{lem: stepfunction-operating-on-L2-functions}
	Consider a step function $\SA \in \ESO$ defined via a partition $P=\{P_1,...,P_{nN}\}$ and $\Sutau \in L^2_{pwc}[0,1] $  defined via the same partition $P$ by 
	$$
		 \Sutau{(\alpha)}=nN\int_{P_i}{\Fu_\tau}(\beta)d\beta , \quad \forall \alpha \in P_i,
	$$
where $\Fu_\tau \in L^2[0,1]$.	
	Then the following result holds:
	\begin{align} \label{equ:step-function-projection}
   (\SA)^k\Sutau &=(\SA)^k \Fu_{\tau}, \quad k \geq 1.
	\end{align}
\end{lemma}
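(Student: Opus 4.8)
The plan is to establish the identity first for $k=1$ and then bootstrap to general $k\geq 1$ by a single left-multiplication. The heart of the matter is that $\SA$, being a step function, is constant on every block $P_i\times P_j$, so the operator $\SA$ acting on any $L^2[0,1]$ function ``sees'' only the block integrals $\int_{P_j}(\cdot)\,d\beta$, $j\in\{1,\dots,nN\}$, and not the finer behaviour of the function within each $P_j$. Since $\Sutau$ is, by construction, the block-average of $\Fu_\tau$ on the \emph{same} partition $P$, it shares these block integrals with $\Fu_\tau$, and the equality of the two images under $\SA$ follows.

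First I would compute $\SA\Fu_\tau$ explicitly using the block-constant structure: for $x\in P_i$ we have $\SA(x,\beta)=A_{Nij}$ whenever $\beta\in P_j$, so that
\begin{equation*}
[\SA\Fu_\tau](x)=\sum_{j=1}^{nN} A_{Nij}\int_{P_j}\Fu_\tau(\beta)\,d\beta,\qquad x\in P_i .
\end{equation*}
Then I would verify that $\Sutau$ has the same block integrals as $\Fu_\tau$: by its defining average and $\mu(P_j)=\tfrac1{nN}$,
\begin{equation*}
\int_{P_j}\Sutau(\alpha)\,d\alpha=\mu(P_j)\cdot nN\int_{P_j}\Fu_\tau(\beta)\,d\beta=\int_{P_j}\Fu_\tau(\beta)\,d\beta .
\end{equation*}
Substituting this into the displayed formula (now applied to $\Sutau$) gives $[\SA\Sutau](x)=[\SA\Fu_\tau](x)$ for every $x$, which is precisely the $k=1$ case. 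Note in passing that $\SA\Fu_\tau$ is itself constant on each $P_i$, hence lies in $L^2_{pwc}[0,1]$.

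Finally, having the identity $\SA\Sutau=\SA\Fu_\tau$, I would left-multiply both sides by $(\SA)^{k-1}$ and invoke associativity of the operator product established in Section II to obtain $(\SA)^k\Sutau=(\SA)^k\Fu_\tau$ for all $k\geq 1$; equivalently one may phrase this as an immediate induction. I do not anticipate a genuine obstacle: the single point requiring care is the normalisation $nN\,\mu(P_j)=1$, which is exactly what forces the block integrals of $\Sutau$ and $\Fu_\tau$ to coincide, and everything else is routine bookkeeping with the block-constant form of $\SA$.
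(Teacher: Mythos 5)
Your proof is correct and takes essentially the same route as the paper: both arguments evaluate $\SA\Fu_{\tau}$ and $\SA\Sutau$ blockwise on the partition, observe that the block integrals of $\Fu_{\tau}$ and $\Sutau$ coincide because $nN\,\mu(P_j)=1$, and then pass from $k=1$ to general $k\geq 1$ by applying $(\SA)^{k-1}$. No gaps.
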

\begin{proof}

  Let $A_{ij}^{[N]} = \SA(x,y)$, for all $ (x,y) \in (P_i, P_j).$
	Then for all $x\in P_i,$ 
	\begin{equation} \label{equ:AsU}
	\begin{aligned}
				 &{[\SA\Fu_{\tau}](x)}  = \int_0^1\SA(x,y)\Fu_{\tau}(y)dy\\
         & =  \sum_j\int_{P_j}A_{ij}^{[N]} \Fu_{\tau}(y)dy
				=\sum_j  A_{ij}^{[N]}\int_{P_j} \Fu_{\tau}(y)dy\\
				&= \sum_j A_{ij}^{[N]}\cdot \mu(P_j)\cdot \Fu_{\tau}^{[\FN]}(x),
	\end{aligned}
	\end{equation}
	\begin{equation}\label{equ: AsUN}
	\begin{aligned}
			&{[\SA\Fu_{\tau}^{[\FN]}](x)} = \sum_j\int_{P_j}A_{ij}^{[N]}\Fu_{\tau}^{[\FN]}(y)dy,\qquad \quad \\
				&= \sum_jA_{ij}^{[N]}\cdot \mu(P_j)\cdot \Fu_{\tau}^{[\FN]}(x).
	\end{aligned}
	\end{equation}
(\ref{equ:AsU}) and (\ref{equ: AsUN}) give the equality 
$\SA \Sutau =\SA \Fu_\tau$, which immediately implies \eqref{equ:step-function-projection}.
\end{proof}

\begin{lemma} \label{lem:operator-norm-and-L2-norm}
For any graphon $\FW$ or any function $\FW$ in $L^2[0,1]^2$, $\|\FW\|_{\textup{op}} \leq \|\FW\|_2.$
\end{lemma}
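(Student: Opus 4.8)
The plan is to establish the pointwise operator bound $\|\FW\Fv\|_2 \le \|\FW\|_2\,\|\Fv\|_2$ for every $\Fv\in L^2[0,1]$, where $\FW$ acts as the integral operator defined in \eqref{equ: graphon operation on functions}; taking the supremum over $\Fv$ with $\|\Fv\|_2=1$ then yields $\|\FW\|_{\textup{op}}\le\|\FW\|_2$ directly from the definition of the operator norm. This is the standard fact that the operator norm of an integral operator is dominated by its Hilbert--Schmidt (here $L^2[0,1]^2$) norm.

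First I would fix $\Fv\in L^2[0,1]$ and expand
$$\|\FW\Fv\|_2^2=\int_0^1\left|\int_0^1\FW(x,\alpha)\Fv(\alpha)\,d\alpha\right|^2 dx.$$
Then, for each fixed $x\in[0,1]$, I would apply the Cauchy--Schwarz inequality to the inner integral, regarding $\FW(x,\cdot)$ and $\Fv(\cdot)$ as functions of $\alpha\in[0,1]$, obtaining
$$\left|\int_0^1\FW(x,\alpha)\Fv(\alpha)\,d\alpha\right|^2\le\left(\int_0^1|\FW(x,\alpha)|^2\,d\alpha\right)\|\Fv\|_2^2.$$
Integrating this bound over $x\in[0,1]$ and interchanging the order of integration (valid by Tonelli for the nonnegative integrand $|\FW(x,\alpha)|^2$) identifies the resulting double integral with $\|\FW\|_2^2$, so that $\|\FW\Fv\|_2^2\le\|\FW\|_2^2\,\|\Fv\|_2^2$. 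Taking square roots and then the supremum over unit-norm $\Fv$ completes the argument.

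There is essentially no serious obstacle here; the only points requiring care are measurability and integrability. Specifically, $\FW\in L^2[0,1]^2$ guarantees by Tonelli that $\int_0^1|\FW(x,\alpha)|^2\,d\alpha<\infty$ for almost every $x$, so $\FW(x,\cdot)\in L^2[0,1]$ for a.e.\ $x$ and the Cauchy--Schwarz step is legitimate for a.e.\ $x$; the same theorem justifies the interchange of integration order. The argument is insensitive to whether $\FW$ is a graphon in $\ESO$ or merely a symmetric function in $L^2[0,1]^2$, so both cases claimed in the statement follow from the identical computation.
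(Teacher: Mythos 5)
Your proof is correct and follows essentially the same route as the paper: both apply the Cauchy--Schwarz inequality to the inner integral $\int_0^1 \FW(x,\alpha)\Fv(\alpha)\,d\alpha$ for fixed $x$, integrate the resulting bound over $x$, and take the supremum over unit-norm $\Fv$. Your added remarks on Tonelli and a.e.\ membership of $\FW(x,\cdot)$ in $L^2[0,1]$ are sound refinements of the measurability details the paper leaves implicit.
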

\begin{proof}
$$
  \begin{aligned}
    &\|\FW\|_{\text{op}}  = \sup_{x \neq 0, x \in L^2[0,1]}\frac{\|\FW x\|_2}{\|x\|_2}\\
          &  = \sup_{x \neq 0, x \in L^2[0,1]}\frac{\sqrt{\int_0^1 \left[\int_0^1 \FW(\alpha, \beta) x(\beta) d\beta\right]^2 d\alpha}}{\|x\|_2} \\
          & \leq \sup_{x \neq 0, x \in L^2[0,1]}\frac{\sqrt{ \int_0^1 \left[\int_0^1 \FW^2(\alpha, \beta) d\beta \int_0^1 x^2(\beta) d\beta \right] d\alpha}}{\|x\|_2}\\
          & = \sup_{x \neq 0, x \in L^2[0,1]}\frac{\|x\|_2   \sqrt{ \int_0^1 \int_0^1 \FW^2(\alpha, \beta) d\beta d\alpha } }{\|x\|_2}  
           = \|\FW\|_2.
  \end{aligned} 
$$     
\end{proof}

\begin{lemma}\label{lem:graphon-exp-norm}
   For any $\Fu\in L^2[0,1]$,  any $\FA \in \ESO$, and any $t\in[0,T]$, the following inequalities hold
   \begin{equation*} 
     \left\|e^{\FA t} \Fu \right\|_2 \leq e^{t\|\FA\|_\textup{op}} \|\Fu\|_2\leq e^{t\|\FA\|_2} \|\Fu\|_2.
   \end{equation*}
\end{lemma}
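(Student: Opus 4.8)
The plan is to treat the two inequalities separately, observing that the second is an immediate consequence of a previously established norm comparison while the first requires only the power-series representation of the operator exponential.

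For the \emph{second} inequality I would invoke Lemma \ref{lem:operator-norm-and-L2-norm}, which gives $\|\FA\|_\textup{op} \leq \|\FA\|_2$. Since $t \geq 0$ and the scalar exponential is monotone nondecreasing, this yields $e^{t\|\FA\|_\textup{op}} \leq e^{t\|\FA\|_2}$, and multiplying through by $\|\Fu\|_2 \geq 0$ delivers the bound at once. No further work is needed here.

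For the \emph{first} inequality I would start from the defining series of the uniformly continuous semigroup generated by $\FA$, namely $e^{\FA t} = \sum_{k=0}^{\infty} \frac{t^k \FA^k}{k!}$, which converges in operator norm because $\FA$ is a bounded operator on $L^2[0,1]$. Applying this series to $\Fu$ and using the triangle inequality in $L^2[0,1]$ together with the continuity of the norm (which justifies passing the norm inside the convergent sum) gives $\|e^{\FA t}\Fu\|_2 \leq \sum_{k=0}^{\infty} \frac{t^k}{k!}\|\FA^k \Fu\|_2$. The key estimate is the submultiplicativity of the operator norm, $\|\FA^k \Fu\|_2 \leq \|\FA\|_\textup{op}^k \|\Fu\|_2$, which holds since $\|\FA^k\|_\textup{op} \leq \|\FA\|_\textup{op}^k$ and $\|\FA^k \Fu\|_2 \leq \|\FA^k\|_\textup{op}\|\Fu\|_2$. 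Substituting this and using $t \geq 0$ so that every term is nonnegative, the right-hand side collapses to $\left(\sum_{k=0}^{\infty} \frac{(t\|\FA\|_\textup{op})^k}{k!}\right)\|\Fu\|_2 = e^{t\|\FA\|_\textup{op}}\|\Fu\|_2$, which is exactly the desired bound.

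The argument is essentially routine; the only point deserving care is the interchange of the $L^2$ norm with the infinite sum. This is legitimate because the partial sums of $\sum_{k} \frac{t^k \FA^k}{k!}\Fu$ converge in $L^2[0,1]$ (the series converges in operator norm and $\Fu$ is a fixed element), so the continuity of the norm permits the termwise majorization. I do not anticipate any genuine obstacle beyond making this convergence remark explicit.
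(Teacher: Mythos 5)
Your proof is correct and follows essentially the same route as the paper: expanding $e^{\FA t}$ as a power series, bounding termwise via $\|\FA^k\|_\textup{op} \leq \|\FA\|_\textup{op}^k$, and then invoking Lemma \ref{lem:operator-norm-and-L2-norm} for the second inequality. Your explicit remark justifying the interchange of the $L^2$ norm with the infinite sum is a small addition of rigor the paper leaves implicit, but the argument is otherwise identical.
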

\begin{proof}
By recursively applying the definition of the operator norm,   we obtain that $\|\FW^k\|_{\text{op}}\leq \|\FW\|_{\text{op}}^k,~ k \in \BN \cup \{0\}$, for any $\FW\in \ESO$. Hence,
\begin{equation*}
  \begin{aligned}
    \left\|e^{\FA t} \Fu \right\|_2 &\leq  \sum_{k=0}^\infty \frac{1}{k!} t^k  \|\FA^k \Fu\|_2 
    \leq \sum_{k=0}^\infty \frac{1}{k!} t^k  \|\FA\|_{\textup{op}}^k\| \Fu\|_2 \\
    & = e^{t\|\FA\|_\textup{op}}\|\Fu\|_2\leq e^{t\|\FA\|_2} \|\Fu\|_2. \quad \text{(by Lemma \ref{lem:operator-norm-and-L2-norm})}
  \end{aligned}
\end{equation*}
\end{proof}
  
  Results in Lemma \ref{lem: stepfunction-operating-on-L2-functions}, %
   Lemma \ref{lem:operator-norm-and-L2-norm} and Lemma \ref{lem:graphon-exp-norm} generalize to functions in any uniformly bounded subsets of $\ES$, that is, any set of symmetric measurable functions $\FW:[0,1]^2\rightarrow I$ where $I$ is a bounded interval in $\BR$.
\section{Proofs of Graphon System Properties}\label{sec:Proofs for Graphon System Properties}
\subsection{Proof of Lemma \ref{lem: Network-Stepfunction-Graphon}}
\begin{proof}
Let $P^{nN}=\{P_1,..., P_{nN}\}$  be the uniform partition of $[0,1]$ with $P_i= [\frac{i-1}{nN}, \frac{i}{nN}), 1\leq i <nN,$ and $P_{nN}=[\frac{nN-1}{nN}, 1]$. Consider any $\Fx_\Fs \in L^2_{pwc}[0,1]$ and its corresponding vector $x\in \BR^{nN}$ following the vetor-to-PWC-function mapping $M_G$.
Since
\begin{equation*}
	[\SA \Fx_\Fs](\alpha)=\int_0^1\SA(\alpha, \beta) \Fx_\Fs(\beta) d\beta , \quad \Fx_\Fs\in L^2_{pwc}[0,1],
\end{equation*}
it follows that for all  $\alpha \in P_i$,
\begin{equation} \label{equ: step function integration and circle operation equivalance}
\begin{aligned}
	 {[\SA \Fx_\Fs]}(\alpha)%
	&=\sum_{j=1}^{nN} \int_{P_j} \SA(\alpha, \beta) \Fx_\Fs(\beta) d\beta\\
	&=\sum_{j=1}^{nN} \int_{P_j} A_{Nij} x_j d\beta
	=\sum_{j=1}^{nN} \frac{1}{nN} A_{Nij} x_j \\
	&= \frac{1}{nN} [A_Nx]_i=[A_N\circ x]_i,
	\end{aligned}
\end{equation}
where $x_j$ denotes the $j^{th}$ element of $x \in R^{nN}$ and $[A_Nx]_i$ denotes the $i^{th}$ element of $A_Nx \in R^{nN}$.
This implies that the step function $\SA$ in the graphon  space, considered as an  operator, represents a mapping in $L^2[0,1]$; this operator  is equivalent to the matrix transformation $A_N$ with $\circ$ operation in $R^{nN}$ and  the corresponding  mapping $M_G$. A similar conclusion holds for $\SB$ and $B_N$.
Furthermore, it is obvious that 
\begin{equation}
 \forall \gamma \in P_i, \quad \alpha_N \BI \Fx_\Fs (\gamma) = \alpha_N \Fx_\Fs (\gamma) = \alpha_N x_i, 
\end{equation}
and a similar conclusion holds for $\beta_N \BI$ and $\beta_N I$. 
Hence we conclude that 
 the trajectory of the system $(\alpha_N I + A_N;\beta_N I+ B_N)$ corresponds one-to-one to that of $(\alpha_N \BI + \SA;\beta_N \BI + \SB)$ under the corresponding vetor-to-PWC-function mapping $M_G$. 
\end{proof}

\subsection{Proof of Theorem \ref{theorem: convergence in operator}}
\begin{proof}
Let us define
$
P_k(x,y)=\sum_{i=0}^k x^{k-i}y^{i}.
$
Then
$x^k-y^k=(x-y)P_{k-1}(x,y).$
 We obtain that for $k\geq1$
$$
	\FA^k_\mathbf{N}- \FA_*^k= P_{k-1}(\FA_\mathbf{N}, \FA_*)(\FA_\mathbf{N}- \FA_*).
$$
  Since $\FA^{(k-i-1)} \FA_*^i\in \ESO$, for all $i \in\{0,1,...k-1\},$
  we know that
    $\|\FA^{(k-i-1)}\FA_*^i\|_2\leq 1.$
Hence, by Lemma \ref{lem:operator-norm-and-L2-norm}, 
\begin{equation}
\begin{aligned}
  \|P_{k-1}(\FA_\mathbf{N}, \FA_*)\|_{\text{op}}
   &\leq \sum_{i=0}^{k-1}\|\FA^{(k-i-1)}\FA_\Fs^i\|_2  \leq k \cdot 1. 
\end{aligned}
\end{equation}
For an arbitrary $\Fx\in L^2[0,1]$ and finite $t$, $0 \leq t < \infty$,
\begin{equation}\label{equ:proof-exp-x}
	\begin{aligned}
		\big\|&e^{\FA_\mathbf{N}t}\Fx-e^{\FA_*t}\Fx \big\|_2 %
		\sum_{k=1}^{\infty}\frac{t^k}{k!}\|(\FA_\mathbf{N}^k - \FA_*^k) \Fx\|_2\\
		& \leq \sum_{k=1}^{\infty}\frac{t^k}{k!}\|P_{k-1}(\FA_\mathbf{N}, \FA_*)\|_{\text{op}} \cdot\|\FA_{\Delta}^{\mathbf{N}}\|_{\text{op}} \|\Fx\|_2\\
		& \leq \sum_{k=1}^{\infty}\frac{t^k}{k!}\cdot k \cdot\|\FA_{\Delta}\|_{\text{op}} \|\Fx\|_2
			=te^t\cdot \|\FA_{\Delta}^{\mathbf{N}}\|_\text{op}\|\Fx\|_2.
	\end{aligned}
\end{equation}
It follows that for  $t\in [0, T]$
\begin{equation}
	\begin{aligned}
		 \left\|e^{\FA_\mathbf{N}t}\Fx-e^{\FA_*t}\Fx\right\|_2
		& \leq Te^T\cdot \|\FA_{\Delta}^\mathbf{N}\|_\text{op}\|\Fx\|_2.
	\end{aligned}
\end{equation}
Hence the convergence is point-wise in time and  uniform in $t$ over $[0, T]$ and hence  \eqref{equ:convergence-exp-1} holds.   Furthermore,
\begin{equation}\label{equ:proof-exp-x2}
\begin{aligned}
   &\left\|e^{(\alpha_N \BI + \FA_\FN)t}\Fx - e^{(\alpha \BI + \FA_*)t}\Fx\right\|_2 
   = \left\|e^{\alpha_N t} e^{\FA_\FN t} \Fx - e^{\alpha t}e^{\FA_* t}\Fx\right\|_2\\
   &\leq \left\|e^{\alpha_N t}(e^{\FA_\FN t}- e^{\FA_* t})  \Fx\right\|_2+ \left\| (e^{\alpha_N t}- e^{\alpha t})e^{\FA_* t}\Fx\right\|_2\\
   & \leq e^{\alpha_N t} te^{t} \|\FA_{\Delta}^\mathbf{N}\|_\text{op}\|\Fx\|_2 + |\alpha-\alpha_N| t  e^{(L_\alpha +\|\FA_*\|_\text{op})t}\left\|\Fx\right\|_2, ~
\end{aligned}
\end{equation}
where $L_\alpha= \max\{|\alpha|, |\alpha_N|\}$.
 The last step of \eqref{equ:proof-exp-x2} is due to equation \eqref{equ:proof-exp-x}, Lemma \ref{lem:graphon-exp-norm} and the following
    \begin{equation}
      \begin{aligned}
        |e^{\alpha t} &-  e^{\alpha_N t}| = \left|\sum_{k=0}^\infty \frac{1}{k!}[(\alpha )^k - (\alpha_N )^k]t^k\right|\\
        & \leq \sum_{k=1}^{\infty} \frac{1}{k!}|\alpha-\alpha_N|\cdot k L_\alpha^{(k-1)} t^k
         = |\alpha-\alpha_N| t e^{tL_\alpha }
      \end{aligned}
    \end{equation}
    where $t>0$ and $L_\alpha= \max\{|\alpha|, |\alpha_N|\}$.
An immediate implication of \eqref{equ:proof-exp-x2} is that for $t\in[0,T]$,
\begin{equation}\label{equ:proof-exp-x3}
\begin{aligned}
  &\left\|e^{(\alpha_N \BI + \FA_\FN)t}\Fx - e^{(\alpha \BI + \FA_*)t}\Fx\right\|_2  \\
     & ~ \leq e^{\alpha_N T} Te^{T} \|\FA_{\Delta}^\mathbf{N}\|_\text{op}\|\Fx\|_2 + |\alpha-\alpha_N| T  e^{(L_\alpha +\|\FA_*\|_{\text{op}})T}\left\|\Fx\right\|_2, ~\\
\end{aligned}
\end{equation}
By the convergence of $\{\alpha_N\}$, we know $\{\alpha_N\}$ and $\{L_\alpha\}$ are both uniformly bounded. 
This, together with \eqref{equ:proof-exp-x3}, implies the convergence in \eqref{equ:convergence-exp-2} which is uniform in time over a closed time horizon $[0,T]$. 
\end{proof}

\section{Proofs for Exact Controllability}\label{sec:Proofs for Exact Controllability}

\subsection{Proof of Theorem \ref{thm:sufficient-condition-for-exact-controllability}}
\begin{proof}

Since any $\FA \in \ESO$ defines a self-adjoint and compact operator, it has a discrete spectrum \cite{lovasz2012large}, and  the maximum absolute value of eigenvalues of $\FA$ equals to the operator norm
\cite[Theorem 12.31]{rudin1991functional}, that is, 
  $\|\FA\|_{\text{op}} = \max_\ell |\lambda_\ell|$, where $\{\lambda_\ell\}$ denotes the set of eigenvalues of $\FA$.
	For any $\FA \in \ESO $, $\FA$ is a bounded operator on $L^2[0,1]$, that is,
   there exists some finite $c_1>0$, such that $ \|\FA\|_{\text{op}} \leq c_1$. Therefore for $\ell \in \BN$, $c_1 \geq \lambda_{\ell} \geq -c_1$ and hence for $t>0$,
		$\|e^{\FA t}\|_{\text{op}}\geq  e^{\lambda_\ell t}   \geq e^{-c_1 t}>0.$
  Hence based on Lemma \ref{lem:seperation-graphon-exp}, for $t>0$,
  \begin{equation}
    \|e^{\BA t}\|_{\text{op}}= \|e^{(\alpha\FI+\FA) t}\|_{\text{op}} = e^{\alpha t}\|e^{\FA t}\|_{\text{op}} \geq  e^{(\alpha-c_1) t}>0.
  \end{equation}
 This implies  $e^{\BA t}$ as an operator is uniformly positive definite.

 Since all the values in the spectrum of $\BB\BB^\TRANS$ as a self-joint operator are lower bounded by a positive constant, 
 there exists $c>0$ such that, for all $x\in L^2[0,1]$,
 
 $\langle \BB \BB^\TRANS x,  x\rangle \geq c \|x\|_2^2.
 $ 
 See e.g. \cite[Theorem 12.12]{rudin1991functional}.
  Consider the time horizon $[0, T]$. For any $h\in L^2[0,1]$,
\begin{equation}
	\begin{aligned}
		\langle \BW_T h, h \rangle & =  \int_0^T \langle\BB \BB^\TRANS e^{\BA^\TRANS t} h,  e^{\BA^\TRANS t} h \rangle dt 
		\geq c \int_0^T \| e^{\BA^\TRANS t} h \|_2^2dt\\
	& \geq  cT (e^{(\alpha-c_1) T})^2 \|h\|_2^2 ,
	\end{aligned}
\end{equation}
and hence the system $(\BA;\BB)$ is exactly controllable.
\end{proof}
\subsection{Proof of Proposition \ref{prop:graphon-system-with-graphon-input-not-exact-controllable}}	
\begin{proof}
	By Lemma \ref{lem:operator-norm-and-L2-norm}, since $\FA$ and $\FB$ are graphons in $\ESO$, there exists $c_1\geq 0$ and $c_2\geq 0$, such that 
	\begin{equation}
		 c_1 \geq \|\FA\|_2\geq \|\FA\|_{\text{op}}  \quad \text{and}\quad  c_2 \geq \|\FB\|_2\geq \|\FB\|_{\text{op}}.
	\end{equation}
	Hence 
	\begin{equation}
		\begin{aligned}
				\|e^{\FA t}\|_{\text{op}} 
        &  \leq \sup_{x\in L^2[0,1], \|x\|_2=1} \sum_{k=0}^\infty \frac1{k!}\|\FA t\|_{\text{op}}^k \|x\|_2 \\
				&=  e^{\|\FA\|_{\text{op}} t} \leq e^{c_1 t}, \quad t \in [0, T]. 
		\end{aligned}
	\end{equation}
Therefore 
	\begin{equation}
		\begin{aligned}
			\|\BW_T&\|_2   \leq \int_0^T \left\|e^{\FA t}\FB\FB^\TRANS e^{\FA^\TRANS t}\right\|_2 dt 
						 \leq \int_0^T\left\|e^{\FA t}\FB \right\|_2^2 dt\\
						& \leq \int_0^T\left(\|e^{\FA t}\|_{\text{op}} \|\FB\|_2\right)^2 dt
						 \leq T (e^{c_1 T} c_2)^2 < \infty,
		\end{aligned}
	\end{equation}
	which implies $\BW_T \in L^2[0,1]^2$ and hence $\BW_T$ is a compact (and self-joint) operator on $L^2[0,1]$ functions (see e.g. \cite[Chapter 2, Proposition 4.7]{conway2013course}). This means that $\BW_T$  has a countable number of nonzero (real) eigenvalues $\{\lambda_1, \lambda_2, ...\}$ such that $\lambda_n \rightarrow 0$, and each eigenvalue has finite multiplicity (see e.g. \cite{bensoussan2007representation}). Therefore $\BW_T$ is not uniformly positive definite and hence the system $(\FA;\FB)$ is not exactly controllable. 
\end{proof}
\section{Proofs for State-to-state Graphon Control}\label{sec:Proofs for State-to-state Graphon Control}

\begin{lemma}\label{lem:state-traject-identity-in}
  Consider any $\Fu\in L^2\left([0,T];L^2[0,1]\right)$,  $ \BA, ~\BSA \in  \mathcal{G}^1_\mathcal{{AI}}$, and any $t\in[0,T]$. Let $\BA = (\alpha \BI+ \FA)$, $\BSA= (\alpha_N \BI+ \SA)$. Then the following inequality holds
\begin{equation}
  \begin{aligned}
    &\left\|\int_0^T\left(e^{\BA(T-t)} -e^{\BSA(T-t)}\right)\Fu_\Ft dt \right\|_2\\
     \leq &\left|\alpha- \alpha_N\right| \int_0^T L_{\alpha}(T-t)e^{(L_{\alpha}+\|\FA\|_\textup{op})(T-t)}\left\|\Fu_\Ft\right\|_2 dt \\
      & \quad + \|\FA_{\Delta}^{\mathbf{N}}\|_\textup{op} \int_0^T e^{(\alpha_N+1)(T-t)} (T-t) \|\Fu_\Ft\|_2 dt
  \end{aligned}
\end{equation}
    where $L_{\alpha}= \max\{|\alpha|, |\alpha_N|\}$ and $\FA_{\Delta}^{\mathbf{N}}=\FA-\SA$. 
\end{lemma}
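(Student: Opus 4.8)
The plan is to bound the $L^2$ norm of the integral by pulling the norm inside and reducing to a pointwise-in-time estimate for the difference of the two semigroups. First I would apply the triangle inequality for the B\"ochner integral,
\begin{equation*}
\left\|\int_0^T\!\left(e^{\BA(T-t)}-e^{\BSA(T-t)}\right)\Fu_\Ft\,dt\right\|_2 \leq \int_0^T\!\left\|\left(e^{\BA(T-t)}-e^{\BSA(T-t)}\right)\Fu_\Ft\right\|_2 dt,
\end{equation*}
which is justified because $\Fu\in L^2([0,T];L^2[0,1])$ and, by Lemma \ref{lem:exp-of-I+A}, both semigroups are uniformly operator-norm bounded on $[0,T]$, so the integrand is measurable and integrable.

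The heart of the argument is the pointwise bound on $\|(e^{\BA s}-e^{\BSA s})\Fx\|_2$ with $s=T-t$ and $\Fx=\Fu_\Ft$. This is exactly the content of the pointwise estimate \eqref{equ:exp-of-A+I} in Theorem \ref{theorem: convergence in operator}, once I identify $\BSA=\alpha_N\BI+\SA$ with the ``$N$'' system and $\BA=\alpha\BI+\FA$ with the limit, so that $\FA_\Delta^{\mathbf{N}}=\FA-\SA$ plays the role of the operator discrepancy. To make the constants transparent I would re-derive it by invoking Lemma \ref{lem:seperation-graphon-exp} to write $e^{\BA s}=e^{\alpha s}e^{\FA s}$ and $e^{\BSA s}=e^{\alpha_N s}e^{\SA s}$, and then telescoping,
\begin{equation*}
e^{\BA s}-e^{\BSA s}=\left(e^{\alpha s}-e^{\alpha_N s}\right)e^{\FA s}+e^{\alpha_N s}\left(e^{\FA s}-e^{\SA s}\right).
\end{equation*}
The first summand is handled by the scalar mean-value estimate $|e^{\alpha s}-e^{\alpha_N s}|\leq|\alpha-\alpha_N|\,s\,e^{L_\alpha s}$ together with $\|e^{\FA s}\Fx\|_2\leq e^{\|\FA\|_{\textup{op}}s}\|\Fx\|_2$ from Lemma \ref{lem:graphon-exp-norm}, producing the exponent $L_\alpha+\|\FA\|_{\textup{op}}$; the second summand is handled by the factor $e^{\alpha_N s}$ and the operator-difference bound \eqref{equ:exp-of-A}, namely $\|(e^{\FA s}-e^{\SA s})\Fx\|_2\leq s\,e^{s}\|\FA_\Delta^{\mathbf{N}}\|_{\textup{op}}\|\Fx\|_2$, producing the exponent $\alpha_N+1$.

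Substituting $s=T-t$ and $\Fx=\Fu_\Ft$ and integrating in $t$ then yields the two integral terms, the $|\alpha-\alpha_N|$ term and the $\|\FA_\Delta^{\mathbf{N}}\|_{\textup{op}}$ term. I do not expect any serious analytic obstacle here: the separation afforded by Lemma \ref{lem:seperation-graphon-exp} decouples the scalar growth $e^{\alpha s}$ from the compact part $e^{\FA s}$, so each piece is estimated independently, and the only real work is the bookkeeping of exponents (checking that $e^{\alpha_N s}\cdot e^{s}=e^{(\alpha_N+1)(T-t)}$ and carrying $L_\alpha=\max\{|\alpha|,|\alpha_N|\}$ through the scalar estimate) together with the routine verification that each factor $(T-t)$ survives the integration.
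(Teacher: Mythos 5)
Your proof is correct and essentially identical to the paper's, which disposes of the lemma in one line by applying the pointwise estimate \eqref{equ:exp-of-A+I} of Theorem \ref{theorem: convergence in operator} under the integral sign; your telescoping decomposition $e^{\BA s}-e^{\BSA s}=(e^{\alpha s}-e^{\alpha_N s})e^{\FA s}+e^{\alpha_N s}(e^{\FA s}-e^{\SA s})$ is precisely how the paper derives \eqref{equ:exp-of-A+I} in Appendix \ref{sec:Proofs for Graphon System Properties}. One remark: your derivation yields the first term \emph{without} the factor $L_\alpha$ printed inside the lemma's first integral, which agrees both with \eqref{equ:exp-of-A+I} and with the form of the bound actually used in Theorem \ref{thm:mm-graphon-main}, so that stray $L_\alpha$ is evidently a typographical slip in the statement rather than a gap in your argument.
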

\begin{proof}
An application of \eqref{equ:exp-of-A+I} in Theorem \ref{theorem: convergence in operator} leads to the above result.
\end{proof}
 \subsection{Proof of Theorem \ref{thm:mm-graphon-main}}
\begin{proof}
   \begin{equation}\label{equ:main-mm-terminal-proof}
   \begin{aligned}
     \|&{\Fx_\FT(\Fu)} - {\Fx^\FN_\FT(\Fu^{[\FN]})}\|_2 \\
      = &\left\|\int_0^T e^{\BA(T-t)}\BB \Fu_\Ft dt - \int_0^T e^{\BSA(T-t)}\BSB \Sut dt\right\|_2 \\
     \leq& \left\|\int_0^T [e^{\BA(T-t)}-e^{\BSA(T-t)}]\BB \Fu_\Ft dt\right\|_2\\
      + &\left\|\int_0^T e^{\BSA(T-t)}\left[\BB \Fu_\Ft -\BSB \Sut\right]dt \right\|_2\\
     \leq &\left|\alpha- \alpha_N\right| (|\beta| + \|\FB\|_\textup{op}) \\
     & \qquad \qquad  \int_0^T (T-t)e^{(L_\alpha+\|\FA\|_\textup{op})(T-t)}\left\|\Fu_\Ft\right\|_2 dt \\
     +  &\|\FA_{\Delta}^{\mathbf{N}}\|_\textup{op}(|\beta| + \|\FB\|_\textup{op}) \int_0^T e^{(\alpha_N+1)(T-t)} (T-t) \|\Fu_\Ft\|_2 dt\\
      +    &  \left\|\int_0^T e^{\BSA(T-t)}\left[\BB \Fu_\Ft -\BSB \Sut\right]dt \right\|_2 \quad \text{(by Lemma \ref{lem:state-traject-identity-in})}\\
     \leq &\left|\alpha- \alpha_N\right| (|\beta| + \|\FB\|_\textup{op}) \\
     & \qquad \qquad \int_0^T  (T-t)e^{(L_\alpha+\|\FA\|_\textup{op})(T-t)}\left\|\Fu_\Ft\right\|_2 dt \\
     +  &\|\FA_{\Delta}^{\mathbf{N}}\|_\textup{op}(|\beta| + \|\FB\|_\textup{op}) \int_0^T e^{(\alpha_N+1)(T-t)} (T-t) \|\Fu_\Ft\|_2 dt\\
     + & (|\beta-\beta_N|+ \|\FB-\SB\|_\textup{op})\\
     & \qquad \qquad  \int_0^T e^{\left(\alpha_N + \left\|\SA\right\|_\textup{op}\right)(T-t)} \|\Fu_\Ft\|_2 dt \\
     + &|\beta_N|\int_0^T e^{\left(\alpha_N + \|\SA\|_\textup{op}\right)(T-t)} \left\|\Fu_\Ft - \Sut\right\|_2 dt .
   \end{aligned}
   \end{equation}
   The last step is due to Lemma \ref{lem:exp-of-I+A} and the following 
   \begin{equation}
   \begin{aligned}
     & \left[\BB \Fu_\Ft - \BSB \Sut\right] = \left[(\beta \BI + \FB) \Fu_\Ft - (\beta_N \BI + \SB)\Sut\right]\\
     &=\left[(\beta\Fu- \beta_N\Sut )+ \FB \Fu_\Ft -\SB \Sut\right]\\
      &=\left[(\beta\Fu- \beta_N\Sut )+ \FB \Fu_\Ft -\SB \Fu_\Ft \right]~~\text{(by Lemma \ref{lem: stepfunction-operating-on-L2-functions})}\\
      & = (\beta- \beta_N) \Fu_\Ft + \beta_N (\Fu_\Ft-\Sut) + (\FB-\SB)\Fu_\Ft.
   \end{aligned}
   \end{equation}

By the convergence of $\{\alpha_N\}, \{\beta_N\}, \{\SA\}$ and $\{\SB\}$, we obtain that they are uniformly bounded. Based on Proposition \ref{prop:contraction-L2-function}, $\|\Sut\|_2\leq \|\Fu_\Ft\|_2$, for any $t\in [0,T]$ and any $N\in \BN$. Hence from \eqref{equ:main-mm-terminal-proof},  we obtain 
\begin{equation*}
  \lim_{N\rightarrow\infty}\left\|{\Fx_\FT(\Fu)} - {\Fx^\FN_\FT(\Fu^{[\FN]})}\right\|_2 =0.
\end{equation*}
\end{proof}
%
%
%
\section{Inverse of the Controllability Gramian}\label{sec:Inverse of the Controllability Gramian}
Let $\BT$ and $\BS$ be linear bounded operators on a Hilbert space.%
\begin{proposition}[\cite{bensoussan2007representation}] \label{prop: inverse operator exists}
Assume that $\BT$ and $\BS$ are symmetric and nonnegative. Then $\BI+\BT\BS$ is one-to-one and onto; moreover
  $\|\BS(\BI+\BT\BS)^{-1}\|\leq\|\BS\|,$
and
  $\|(\BI+\BT\BS)^{-1}\| \leq 1+\|\BT\| \|\BS\|.$
\end{proposition}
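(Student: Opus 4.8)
The plan is to exploit the symmetry and nonnegativity of $\BT$ and $\BS$ by passing to square roots, which turns the non-symmetric product $\BT\BS$ into a manifestly symmetric nonnegative operator whose spectrum is easy to control. Throughout I write $R := \BS^{1/2}$ for the unique symmetric nonnegative square root of $\BS$, so that $\BS = R^2$, and $K := R\BT R$, which is symmetric and nonnegative.

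First I would establish invertibility of $\BI + \BT\BS$. Writing $\BT\BS = (\BT R)R$ and invoking the standard fact that the nonzero spectra of a product $CD$ and of $DC$ coincide, the nonzero spectrum of $\BT\BS = (\BT R)R$ equals that of $R(\BT R) = K$. Since $K$ is symmetric and nonnegative, $\sigma(K)\subseteq[0,\infty)$, whence $\sigma(\BT\BS)\subseteq[0,\infty)$. In particular $-1\notin\sigma(\BT\BS)$, which is exactly the assertion that $\BI + \BT\BS = \BT\BS-(-1)\BI$ is one-to-one and onto with bounded inverse.

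Next I would prove $\|\BS(\BI+\BT\BS)^{-1}\|\le\|\BS\|$ via the intertwining identity
\begin{equation}
  R(\BI + \BT\BS) = (\BI + K)R,
\end{equation}
which follows by expanding both sides using $\BT R^2 = \BT\BS$. Since $K\ge 0$ we have $\sigma(\BI+K)\subseteq[1,\infty)$, so $\BI+K$ is invertible with $0\le(\BI+K)^{-1}\le\BI$; rearranging the identity then gives $R(\BI+\BT\BS)^{-1} = (\BI+K)^{-1}R$ and hence $\BS(\BI+\BT\BS)^{-1} = R(\BI+K)^{-1}R$. This operator is symmetric and nonnegative, so its norm equals $\sup_{\|x\|=1}\langle R(\BI+K)^{-1}Rx, x\rangle$; using $(\BI+K)^{-1}\le\BI$,
\begin{equation}
  \langle R(\BI+K)^{-1}Rx, x\rangle = \langle (\BI+K)^{-1}Rx, Rx\rangle \le \|Rx\|^2 = \langle \BS x, x\rangle \le \|\BS\|\,\|x\|^2 .
\end{equation}
Taking the supremum over unit vectors yields the bound. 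Finally, the estimate $\|(\BI+\BT\BS)^{-1}\|\le 1+\|\BT\|\|\BS\|$ follows from the resolvent identity $(\BI+\BT\BS)^{-1} = \BI - \BT\BS(\BI+\BT\BS)^{-1}$ together with submultiplicativity and the previous step, giving $\|(\BI+\BT\BS)^{-1}\|\le 1 + \|\BT\|\,\|\BS(\BI+\BT\BS)^{-1}\|\le 1+\|\BT\|\|\BS\|$.

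I expect the main obstacle to be the invertibility step: because $\BT\BS$ is not symmetric one cannot argue directly from nonnegativity, and the essential move is recognizing that the square-root conjugation (equivalently, the $\sigma(CD)\setminus\{0\}=\sigma(DC)\setminus\{0\}$ principle) transfers the spectral information from the symmetric operator $K=R\BT R$ to the product $\BT\BS$. Once the intertwining identity and the operator inequality $(\BI+K)^{-1}\le\BI$ are in hand, the two norm estimates are routine.
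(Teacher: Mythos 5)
Your proof is correct, and every step checks out: $K=\BS^{1/2}\BT\BS^{1/2}$ is symmetric nonnegative, the spectral transference $\sigma(CD)\setminus\{0\}=\sigma(DC)\setminus\{0\}$ legitimately rules out $-1\in\sigma(\BT\BS)$, the intertwining identity gives $\BS(\BI+\BT\BS)^{-1}=\BS^{1/2}(\BI+K)^{-1}\BS^{1/2}$ whose norm is bounded by $\|\BS\|$ via the quadratic form, and the resolvent identity then yields the second estimate. Note that the paper offers no proof of this proposition at all --- it is imported directly from the cited reference \cite{bensoussan2007representation} --- and your square-root conjugation argument is essentially the standard proof given there (the reference obtains invertibility by exhibiting the explicit inverse $\BI-\BT\BS^{1/2}(\BI+K)^{-1}\BS^{1/2}$ rather than via the spectrum of the product, a cosmetic difference).
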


 Following this we prove the result on the existence of the inverse mapping of graphon controllability Gramian operator  when the system is exactly controllable.

\subsection{Proof of Theorem \ref{theorem: existence of graphon Gramian inverse}}
\begin{proof}
If the graphon system $(\BA; \BB)$ is exactly controllable, then 
$$
   \forall h\in L^{2}[0,1], \quad \exists  c_T > 0, \quad (\BW_T h, h)\geq c_T\|h\|^2.
$$
Let $\BI$ denote the identity operator from $L^2[0,1]$ to $L^2[0,1]$.
Let $\mathbb{M}=\BW_T-\frac12c_T\BI$, then $\BW_T=\frac12c_T(\BI+\frac2{c_T}\mathbb{M})$. By definition, the operator $\mathbb{M}$ is nonnegative and symmetric and hence  $\frac2{c_T}\mathbb{M}$ is nonnegative and symmetric.  
By Proposition \ref{prop: inverse operator exists}, $(\BI+\frac2{c_T}\mathbb{M})$  is one-to-one and onto and the inverse operator is bounded. By a scaling factor $\frac12c_T$, $\BW_T=\frac12c_T(\BI+\frac2{c_T}\mathbb{M})$ is one-to-one and onto and hence the inverse operator $\BW_T^{-1}$ exists. Since the scaling factor $\frac12c_T$ is strictly positive and finite, the inverse operator $\BW_T^{-1}$ is also bounded.   
\end{proof}

\subsection{Proof of Proposition \ref{prop:inverse of the controllability Gramian operator}}
\begin{proof}
  The controllability Gramian is given by 
  \begin{equation}
    \begin{aligned}
      \BW_T & = \int_0^T e^{\FA t} e^{\FA^\TRANS t} dt
            = \int_0^T \left(\BI + \sum_{i=1}^{\infty} (2\FA t)^i \frac{1}{i!} \right) dt \\
           & = T\BI + \sum_{\ell\in I_\lambda} \left(\frac1{2\lambda_\ell}[e^{2\lambda_\ell T}-1]-T \right) \Ff_\ell \Ff_\ell^\TRANS .
    \end{aligned}
  \end{equation}
  Suppose $\Fu=\BW_T\Fx$. We need to find the operator that maps $\Fu$  to $\Fx$. So we set
  $$
  \begin{aligned}
    \Fu=\BW_T\Fx= T\Fx +\sum_{\ell\in I_\lambda} \left(\frac1{2\lambda_\ell}[e^{2\lambda_\ell T}-1]-T \right) \Ff_\ell \Ff_\ell^\TRANS \Fx.
  \end{aligned}
  $$
  Therefore 
  $
  \Fx = \frac{1}T \Fu - \frac{1}T \sum_{\ell\in I_\lambda}\left(\frac1{2\lambda_\ell}[e^{2\lambda_\ell T}-1]-T \right) \Ff_\ell \Ff_\ell^\TRANS \Fx.
  $
  From the definition of $\Fu$, we obtain 
  $\Ff_\ell^\TRANS \Fu %
   = \left(\frac1{2\lambda_\ell}[e^{2\lambda_\ell T}-1]\right)\Ff^\TRANS_\ell \Fx.  $
  Therefore 
  $$
  \Fx = \frac{1}T \Fu - \frac{1}T \sum_{\ell\in I_\lambda}\frac{\left(\frac1{2\lambda_\ell}[e^{2\lambda_\ell T}-1]-T \right)}{\left(\frac1{2\lambda_\ell}[e^{2\lambda_\ell T}-1] \right)}  \Ff_\ell \Ff_\ell^\TRANS \Fu.
  $$
  Equivalently, we obtain the result in \eqref{equ:gramian-inverse}.
\end{proof}

\section{Proofs for  Graphon-LQR}\label{sec:Proofs for  Graphon-LQR}
%

%
 \subsection{Proof of Lemma \ref{lem:Convergnece of Approximated Riccati Solution}}
 \begin{proof}

By Lemma \ref{lem:uniform-converge-Approx} and the definition of the convergence in $C_s([0,T]; \Sigma(L^2[0,1]))$, we obtain that
for any $\Fx \in L^2[0,1]$
 \begin{equation}
    \begin{aligned}
  &\lim_{N\rightarrow \infty} \sup_{t\in[0,T]}\|\BAPt{t} \Fx- \BP_t \Fx\|_2 =0,\\
  &\lim_{N\rightarrow \infty} \sup_{t\in[0,T]}\| \BP_{t} \Fx -\BSPt{t} \Fx \|_2=0.
 \end{aligned}
 \end{equation}
 Since
  	\begin{equation}
        \begin{aligned}
    & \sup_{t\in[0,T]}\|\BAPt{t} \Fx- \BSPt{t} \Fx \|_2  \\
    & \leq \sup_{t\in[0,T]} \|\BAPt{t} \Fx- \BP_{t} \Fx\|_2 + \sup_{t\in[0,T]} \| \BP_{t}\Fx -\BSPt{t} \Fx \|_2, 
      \end{aligned}
    \end{equation}
 we obtain
$
  \lim_{N\rightarrow \infty} \sup_{t\in[0,T]} \|\BAPt{t} \Fx- \BSPt{t} \Fx \|_2 =0.
$  ~
 \end{proof}

\begin{lemma}\label{lem:uniform-bound-for-strong-convergence}
  If a sequence $\{\BT_N \in \mathcal{L}(L^2[0,1])\}$ of bounded linear operators for $L^2[0,1]$ functions  converges strongly to $\BT \in \mathcal{L}(L^2[0,1])$, that is, 
  \begin{equation}
    \forall \Fx \in L^2[0,1],\quad \lim_{N\rightarrow \infty}\|\BT_N \Fx - \BT \Fx\|_2 =0,
  \end{equation}
  then there exists $c>0$ such that
  \begin{equation*}
    \forall N \in \BN, \quad \|\BT_N\|_{\textup{op}} \leq c. 
  \end{equation*}
\end{lemma}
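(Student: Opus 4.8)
The plan is to recognize this as a direct instance of the Banach--Steinhaus theorem (the Uniform Boundedness Principle). The crucial structural fact is that $L^2[0,1]$ is a complete normed space, so pointwise (strong) boundedness of a family of bounded operators upgrades automatically to uniform boundedness in operator norm.

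First I would establish pointwise boundedness. Fix any $\Fx \in L^2[0,1]$. By hypothesis $\BT_N \Fx \to \BT\Fx$ in the $L^2[0,1]$ norm, and every convergent sequence in a normed space is bounded; hence $\sup_{N\in\BN}\|\BT_N\Fx\|_2 =: M_\Fx < \infty$. Since $\Fx$ was arbitrary, the family $\{\BT_N\}_{N\in\BN}$ is pointwise bounded on all of $L^2[0,1]$.

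Next I would invoke the Uniform Boundedness Principle: for a family of bounded linear operators from a Banach space $X$ into a normed space $Y$, pointwise boundedness (i.e. $\sup_N \|\BT_N x\| < \infty$ for each $x\in X$) implies uniform norm boundedness (i.e. $\sup_N \|\BT_N\|_{\textup{op}} < \infty$). Taking $X = Y = L^2[0,1]$, which is complete, we conclude that $c := \sup_{N\in\BN}\|\BT_N\|_{\textup{op}} < \infty$, so that $\|\BT_N\|_{\textup{op}} \leq c$ for all $N\in\BN$, as claimed.

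There is no serious obstacle here beyond quoting the right theorem; the only point requiring care is that the conclusion rests essentially on the completeness of $L^2[0,1]$, which supplies the Baire category argument underlying Banach--Steinhaus. If a self-contained argument were preferred, I would instead run the standard gliding-hump contradiction: assuming $\sup_N\|\BT_N\|_{\textup{op}}=\infty$, one extracts a subsequence whose operator norms grow rapidly and assembles a single $\Fx$, as a convergent series of suitably scaled near-maximizing unit vectors, on which $\sup_N\|\BT_N\Fx\|_2=\infty$, contradicting the pointwise boundedness established in the first step.
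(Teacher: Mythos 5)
Your proof is correct and follows essentially the same route as the paper: you first deduce pointwise boundedness of $\{\BT_N \Fx\}$ from the convergence of $\BT_N\Fx$ to $\BT\Fx$ for each fixed $\Fx$, and then invoke the Uniform Boundedness Principle on the Banach space $L^2[0,1]$, exactly as the paper does. Your remarks on completeness and the gliding-hump alternative are fine but not needed beyond the quoted theorem.
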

\begin{proof}
Consider any fixed $\Fx \in L^2[0,1]$ and an arbitrarily fixed $\varepsilon>0$.
  The strong convergence of $\{\BT_N \}$ implies  there exist $N_0>0$ such that for $N>N_0$,
     $\|\BT_N\Fx - \BT\Fx\|_2 \leq \varepsilon,$
which implies $\|\BT_N \Fx\|_2 \leq \varepsilon +\|\BT \Fx\|_2$ for $N>N_0$. Let $L = \max\{\|\BT_N \Fx\|_2: 1\leq N \leq N_0\}$. Then $ \|\BT_N \Fx\|_2 \leq \max \{L,  \varepsilon +\|\BT \Fx\|_2\}.$
That is, for any fixed $\Fx \in L^2[0,1]$, $\|\BT_N \Fx\|_2$ is uniformly bounded in $N$.  
Since $\BT_N$ is a linear bounded operator from $L^2[0,1]$ to $L^2[0,1]$, the Uniform Boundedness Principle applies here and hence 
$\|\BT_N\|_{\text{op}}$ is uniformly bounded in $N$.
~
\end{proof}

\subsection{Proof of Theorem \ref{thm: Convergence of the States}}

\begin{proof}
	The closed loop system with the optimal control law is given by
	\begin{multline} \label{equ:optimal-closed-loop-equ}
		 \dot{\Fx}_\Ft^{N*}= \left(\BSA - \BSB \BSB^\TRANS \BSPt{(T-t)} \right) \Fx_\Ft^{N*}, \quad
		 t\in [0,T];
	\end{multline}
	the closed loop system under the  graphon approximate control law is given by 
	\begin{multline} \label{equ:approx-closed-loop-equ}
		 {\DSxt}= \left(\BSA - \BSB \BSB^\TRANS \BAPt{(T-t)} \right)\Sxt,  
		 \quad t\in [0,T].
	\end{multline}
  The initial conditions for \eqref{equ:optimal-closed-loop-equ} and \eqref{equ:approx-closed-loop-equ} are given by $\Fx_0 \in L^2[0,1]$.
	Let $\mathbf{e_t^{N}} := \Fx_\Ft^{N*}- \Sxt$. 
	By (\ref{equ:optimal-closed-loop-equ}) and (\ref{equ:approx-closed-loop-equ}), we obtain
	\begin{equation}\label{equ:xe-differential equation}
		\begin{aligned}
		&\mathbf{\dot{e}_t^{N}}   = \mathbb{F}^N_t\mathbf{e_t^{N}}  + {\Fv}^\FN_\Ft,\\
    & \mathbb{F}^N_t  := \left(\BSA- \BSB \BSB^\TRANS \BSPt{(T-t)} \right),\\
  & \Fv^\FN_\Ft  :=\BSB \BSB^\TRANS \left( \BSPt{(T-t)} - \BAPt{(T-t)}   \right) \Fx_\Ft^{N*}.
		\end{aligned}
	\end{equation}
Since $\mathbf{e_0^{N}}  = 0 \in L^2[0,1]$,  the integral representation of (\ref{equ:xe-differential equation}) is given by 
$  \mathbf{e_t^{N}}  = \int_0^t \Fv^\FN_\tau d\tau + \int_0^t \mathbb{F}^N_\tau \mathbf{e_\tau^{N}}  d\tau.$
Hence we obtain 
\begin{equation}
  \|\mathbf{e_t^{N}}\|_2  \leq \int_0^t \|\Fv^\FN_\tau\|_2 d\tau + \int_0^t \|\mathbb{F}^N_\tau\|_{\text{op}} \|\mathbf{e_\tau^{N}}\|_2  d\tau.
\end{equation}
Applying the Gr\"onwall-Bellman inequality \cite[p.7]{bebernes1968differential}, we obtain 

\begin{equation}\label{equ:gronwall}
\begin{aligned}
   \|\mathbf{e_t^{N}} \|_2  \leq %
             \int_0^t e^{\int_s^t \|\mathbb{F}^N_\tau\|_{\text{op}} d\tau} \|\Fv^\FN_s\|_2ds .
\end{aligned}
\end{equation}

By the convergence of $\BSA, \BSB$ and $\BSP$, we obtain that  the limit $\mathbb{F}:= \BA -\BB \BB^\TRANS \BP_{(\cdot)}$ of the sequence $\{\mathbb{F}^N \}$  exists $\text{in } C_s([0,T]; \Sigma(L^2[0,1]))$, that is, 
\begin{equation}\label{equ:FN-convergence}
  \mathbb{F}^N \rightarrow \mathbb{F} \text{ as } N \rightarrow \infty  \text{ in } C_s([0,T]; \Sigma(L^2[0,1]));
\end{equation}
furthermore,  $\mathbb{F}$ is bounded $\text{in } C_s([0,T]; \Sigma(L^2[0,1]))$, that is, 
\begin{equation}
  \forall \Fx \in L^2[0,1], \quad \sup_{t\in[0,T]}\|\mathbb{F}_t \Fx\|_2 <\infty.
\end{equation}
By the convergence of $\{\mathbb{F}^N \}$ in  $C_s([0,T]; \Sigma(L^2[0,1]))$, we obtain that for every 
$\Fx \in L^2[0,1]$ there exists  $c>0$ such that 
$ \sup_{t\in[0,T]} \|\mathbb{F}^N_t \Fx\|_2 \leq c$
holds for all $N$.  Therefore,
\begin{equation}
  \forall \Fx \in L^2[0,1], \quad \sup_{\BT \in \mathcal{F}} \|\BT \Fx\|_2 <\infty,
\end{equation}
where %
$\mathcal{F}:=\{\mathbb{F}^N_t: t\in [0, T], N \in \{1, \ldots,\infty \}  \}.$
By the Uniform Boundedness Principle \cite[Chapter III, Theorem 14.1]{conway2013course}, %
there exists $c_{_F}>0$ such that 
\begin{equation}\label{equ:uniform-bound}
  \sup_{\BT \in \mathcal{F}} \|\BT \|_{\text{op}} \leq c_{_F}.
\end{equation}
Furthermore, since $\{\BSB\}$ converges strongly to $\BB$, $\{\|\BSB\|_{\text{op}} \}$ is uniformly bounded in $N$ by Lemma \ref{lem:uniform-bound-for-strong-convergence}. Let $c_{_B}$ denote this uniform bound. 
This, together with  \eqref{equ:gronwall} and \eqref{equ:uniform-bound}, yields
\begin{equation}
\begin{aligned}
   &\|\mathbf{e_t^{N}} \|_2  \leq %
             \int_0^t e^{(t-s)c_{_F}} \left\|\Fv^\FN_s \right\|_2ds \\
             & \leq c^2_{_B} e^{t c_{_F}}  
             \int_0^t \left\|\left( \BSPt{(T-s)} - \BAPt{(T-s)}   \right) \Fx_s^{N*}\right\|_2ds\\
             &\leq   c^2_{_B} e^{t c_{_F}} \int_0^t \Big( \big\|\big( \BSPt{(T-s)} - \BAPt{(T-s)}   \big)\|_{\text{op}} \| \big(\Fx_s^{N*}-\Fx_s^*\big)\big\|_2\\
   &\qquad\qquad +
\big \| \big( \BSPt{(T-s)} - \BAPt{(T-s)}   \big) \Fx_s^{*}\big\|_2 \Big)
  ds,
\end{aligned}
\end{equation}
where $\Fx^*$ denotes the trajectory of system $(\SA; \SB)$ with the optimal control in \eqref{equ:optimal-control} under the initial condition $\Fx^*_0$ which is the limit of the convergent sequence $\{\SxZ\}$.
Note that
\begin{equation}\label{equ:uniform-bound-state}
  \sup_{t\in[0,T]}\|\Fx_\Ft^{N*}\|_2 =  \sup_{t\in[0,T]} \|e^{\int_0^t \mathbb{F}_s^N ds}\Fx_0^{N*}\|_2\leq e^{Tc_{_F}}\|\Fx_0^{N*}\|_2,
\end{equation}
where the initial conditions $\{\Fx_0^{N*} \in L^2[0,1]\}$ are assumed to converge to $\Fx^*_0$. 
Hence
\begin{equation}\label{equ:uniform-converge-state}
\begin{aligned}
  \sup_{t\in[0,T]}\|\Fx_\Ft^{N*}- \Fx_\Ft^*\|_2 &=  \sup_{t\in[0,T]} \|e^{\int_0^t \mathbb{F}_s^N ds}(\Fx_0^{N*}-\Fx_0^*)\|_2\\
  & \leq e^{Tc_{_F}}\|\Fx_0^{N*}-\Fx_0^*\|_2,
\end{aligned}
\end{equation}
that is, $\{\Fx_\Ft^{N*}\}$ converges to $\Fx_\Ft^*$ with respect to $N$ and are uniformly bounded with respect to $t$ over the horizon $[0,T]$. 

Following a similar argument in \eqref{equ:uniform-bound}, $\{\|\BSPt{t} - \BAPt{t}\|_{\text{op}} \}$ is uniformly bounded for all $t\in[0,T]$ and all $N \in \BN$. This, together with the uniform convergence of  $\{\Fx_\Ft^{N*}\}$  to  $\{\Fx_\Ft^{*}\}$ in \eqref{equ:uniform-converge-state} and the result in Lemma \ref{lem:Convergnece of Approximated Riccati Solution}, implies for any $t\in [0,T]$,
\begin{equation}
  \lim_{N\rightarrow \infty}\|\mathbf{e_t^{N}} \|_2 =0,\quad  \text{i.e.} \quad \lim_{N \rightarrow \infty} \|\Fx^{N*}_t - \Sxt\|_2 =0.
\end{equation}

Next we prove the convergence of the cost function. 
\begin{equation}
	\begin{aligned}
		&\big|J(\Fu^{N*})- J(\Su) \big| \\
    & \leq \int_0^T  \Big( \big \|\BSC (\Fx_\Ft^{N*}+\Sxt)\big\|_2 \big\|\Fx_\Ft^{N*}- \Sxt\big\|_2 \\
    & \qquad + \big\|\Fu_\Ft^{N*}+\Sut \big \|_2 \big\|\Fu_\Ft^{N*}-\Sut\big\|_2\Big) dt \\
    &\qquad +\left\| \BSPZ(\Fx_\FT^{N*}+\SxT)\right\|_2\left\| \Fx_\FT^{N*}-\SxT \right\|_2\\
	\end{aligned}
\end{equation}
Since $\Fx_0^{N*} = \SxZ$, following a similar argument in \eqref{equ:uniform-bound-state} we obtain there exists $c_{_{F1}}>0$ such that 
\begin{equation}
     \sup_{t\in[0,T]}\|\Sxt\|_2 \leq e^{Tc_{_{F1}}}\|\Fx_0^{N*}\|_2.
 \end{equation}
The convergence of $\{\Fx_0^{N*}\}$ implies that there exists a uniform bound $c_x>0$ for $\Fx_t^{N*}$ and $\Sxt$ for all $N \in \{1,2,\ldots\}$ and all $t \in[0,T]$. 
  The strong convergence of $\{\BSC\}$ implies that $\{\|\BSC\|_{\text{op}}\}$ is uniformly bounded in $N$ by some constant $c_{_C}>0$ (see Lemma \ref{lem:uniform-bound-for-strong-convergence}). Therefore,
\begin{equation}
\begin{aligned}
   &\lim_{N\rightarrow \infty}\int_0^T  \Big( \big \|\SC (\Fx_\Ft^{N*}+\Sxt)\big\|_2 \big\|\Fx_\Ft^{N*}- \Sxt\big\|_2\Big) dt\\
  &\leq \lim_{N\rightarrow \infty}c_x c_{_C} \int_0^T   \big\|\Fx_\Ft^{N*}- \Sxt\big\|_2dt\\
  & \leq c_x c_{_C}  \int_0^T \lim_{N\rightarrow \infty}   \big\|\Fx_\Ft^{N*}- \Sxt\big\|_2dt =0
\end{aligned}
\end{equation}
Recall
 \begin{equation}
  \begin{aligned}
    &\Fu_\Ft^{N*} = -\BSB \BSPt{(T-t)} \Fx_\Ft^{N*}, 
    &\Sut = -\BSB \BAPt{(T-t)} \Sxt.
  \end{aligned}
\end{equation}
 Note that $\{\|\BSB\|_{\text{op}} \}$ is uniformly bounded in $N$, and $\{\|\BSPt{t}\|_{\text{op}}\}$ and $\{\|\BAPt{t}\|_{\text{op}}\}$ are uniformly bounded in $N$ and in $t$. These, together with the fact that $\Fx_\Ft^{N*}$ and $\Sxt$ are uniformly bounded in $N$ and $t$, imply that there exists a uniform bound $c_u>0$ for 
 $\{\Fu_\Ft^{N*} \}$ and $\{\Sut\}$ in $N$ and in $t$. Hence
\begin{equation}
\begin{aligned}
   \int_0^T\Big( &\big\|\Fu_\Ft^{N*}+\Sut \big \|_2 \big\|\Fu_\Ft^{N*}-\Sut\big\|_2\Big) dt\\
   &\leq c_u \int_0^T\big\|\Fu_\Ft^{N*}-\Sut\big\|_2 dt \\
   &\leq c_u c_{_B}\int_0^T\Big(\big\| \big(\BSPt{(T-t)} -\BAPt{(T-t)}\big) \Fx_\Ft^{N*}\big\|_2 \\
   &+ \big\|\BAPt{(T-t)} \big( \Fx_\Ft^{N*}-\Sxt \big)\big\|_2\Big)dt.
\end{aligned}
\end{equation}
Hence, 
\begin{equation}
  \lim_{N\rightarrow \infty}\int_0^T\Big( \big\|\Fu_\Ft^{N*}+\Sut \big \|_2 \big\|\Fu_\Ft^{N*}-\Sut\big\|_2\Big) dt =0.
\end{equation}
A similar argument yields 
\begin{equation}
  \begin{aligned}
  &  \lim_{N\rightarrow \infty} \big\|\BSPZ(\Fx_\FT^{N*}+\SxT)\big\|_2 \big\|\Fx_\FT^{N*}-\SxT \big\|_2 =0.
\end{aligned}
\end{equation}
  Therefore, we have
	 	$\lim_{N\rightarrow \infty} \left|J(\Fu^{N*})- J(\Su)\right|=0.$
	  ~
\end{proof}	
%
\bibliographystyle{IEEEtran}
\bibliography{IEEEabrv,mybib}

\vspace{-1cm}
\begin{IEEEbiography}[{\includegraphics[draft=false, width=1in,height=1.25in,clip,keepaspectratio]{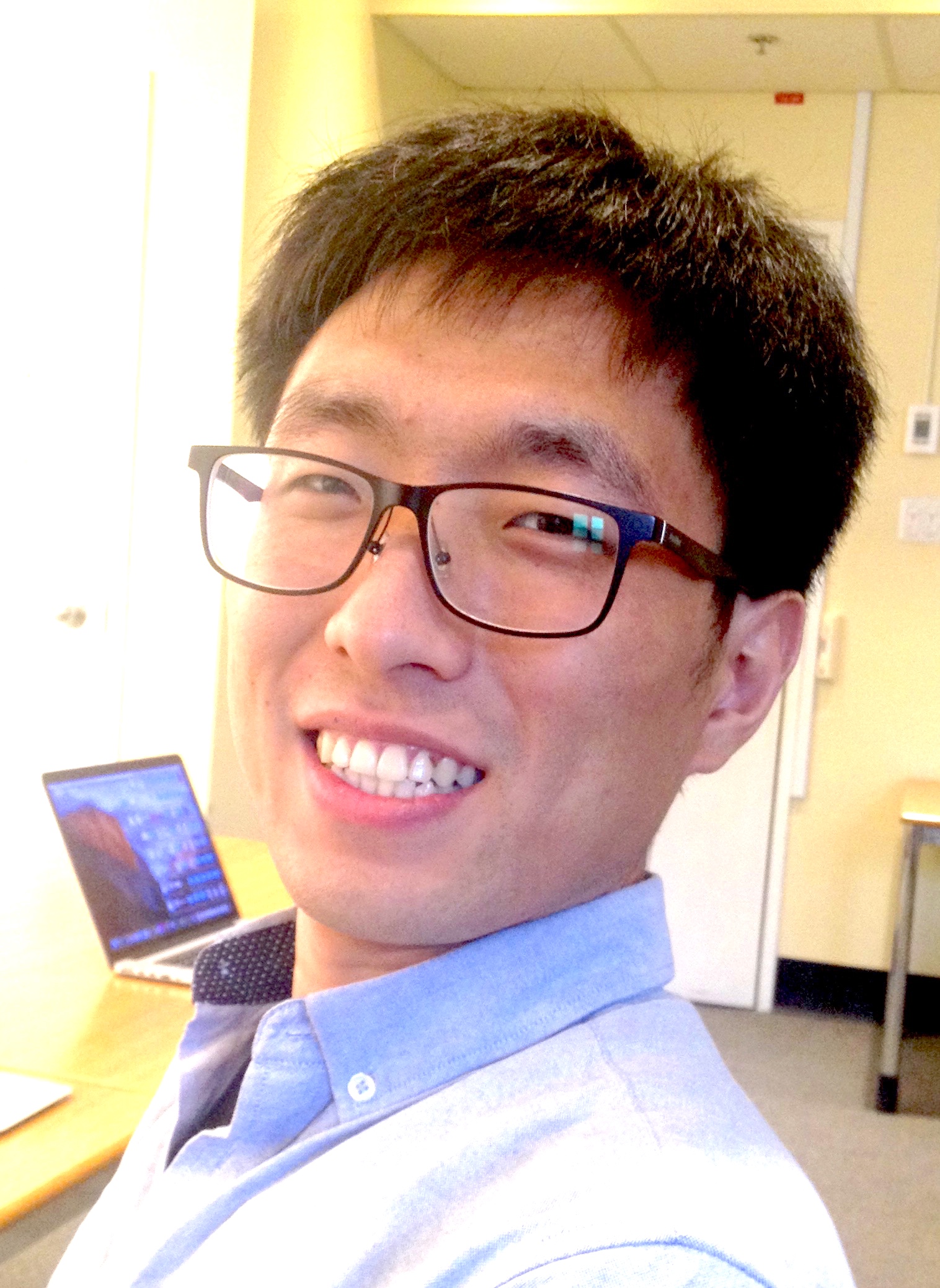}}]{Shuang Gao} (M'14) received the B.E. degree in automation  and M.S. in control science and engineering, from Harbin Institute of Technology, Harbin, China, in 2011 and 2013. He received the Ph.D. degree in electrical engineering from McGill University, Montreal, QC, Canada, in February 2019, under the supervision of Prof. Peter. E. Caines. He is a member of McGill Centre for Intelligent Machines and Groupe d'\'Etudes et de Recherche en Analyse des D\'ecisions. His research interest includes control of network systems, optimization on networks, decentralized control, network modeling, mean field games.
\end{IEEEbiography}
\vspace{-1cm}

\begin{IEEEbiography}[{\includegraphics[draft=false,width=1in,height=1.25in,clip,keepaspectratio]{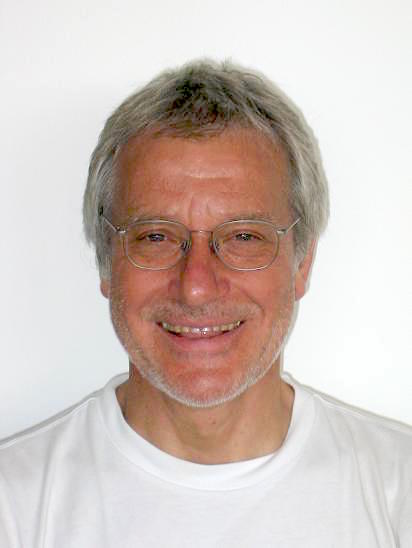}}]{Peter E. Caines} (LF'11)
received the BA in mathematics from Oxford University in 1967 and the PhD in systems and control theory in 1970 from Imperial College, University of London, under the supervision of David Q. Mayne, FRS. After periods as a postdoctoral researcher and faculty member at UMIST, Stanford, UC Berkeley, Toronto and Harvard, he joined McGill University, Montreal, in 1980, where he is Distinguished James McGill Professor and Macdonald Chair in the Department of Electrical and Computer Engineering. In 2000 the adaptive control paper he coauthored with G. C. Goodwin and P. J. Ramadge (IEEE Transactions on Automatic Control, 1980) was recognized by the IEEE Control Systems Society as one of the 25 seminal control theory papers of the 20th century. He is a Life Fellow of the IEEE, and a Fellow of SIAM, the Institute of Mathematics and its Applications (UK) and the Canadian Institute for Advanced Research and is a member of Professional Engineers Ontario. He was elected to the Royal Society of Canada in 2003. In 2009 he received the IEEE Control Systems Society Bode Lecture Prize and in 2012 a Queen Elizabeth II Diamond Jubilee Medal. Peter Caines is the author of Linear Stochastic Systems, John Wiley, 1988, republished as a SIAM Classic in 2018, and is a Senior Editor of Nonlinear Analysis-Hybrid Systems; his research interests include stochastic, mean field game, decentralized and hybrid systems theory,  together with their applications in a range of fields.
\end{IEEEbiography}
\end{document}